\newtheorem{theorem}{Theorem}
\newtheorem{lemma}[theorem]{Lemma}
\newtheorem{proposition}[theorem]{Proposition}
\newtheorem{corollary}[theorem]{Corollary}
\theoremstyle{definition}
\newtheorem{remark}[theorem]{Remark}
\definecolor{colorcita}{RGB}{21,86,130}
\definecolor{colorref}{RGB}{5,10,177}
\definecolor{colorweb}{RGB}{177,6,38}
\DeclareMathOperator{\re}{Re}
\DeclareMathOperator{\essinf}{essinf}
\DeclareMathOperator{\ess}{ess}
\DeclareMathOperator{\gpd}{gpd}
\title{Multipliers for Hardy spaces of Dirichlet series}
\author{Tomás Fernández Vidal\thanks{Supported by CONICET-PIP 11220200102336} \and Daniel Galicer\thanks{Supported by PICT 2018-4250.}  \and Pablo Sevilla-Peris\thanks{Supported by MINECO and FEDER Project MTM2017-83262-C2-1-P and by GV Project AICO/2021/170}}
\date{}
\begin{document}
	
\maketitle	

\begin{abstract}
We characterize the space of multipliers from the Hardy space of Dirichlet series $\mathcal H_p$ into $\mathcal H_q$ for every $1 \leq p,q \leq \infty$. 
For a fixed Dirichlet series, we also investigate some structural properties of its associated multiplication operator. In particular, we study the norm, the essential norm, and the spectrum for an operator of this kind.
We exploit the existing natural identification of spaces of Dirichlet series with spaces of holomorphic functions in infinitely many variables and apply several methods from complex and harmonic analysis to obtain our results. As a byproduct we get analogous statements on such Hardy spaces of holomorphic functions.

\end{abstract}

\footnotetext[0]{\textit{Keywords:} Multipliers, Spaces of Dirichlet series, Hardy spaces, Infinite dimensional analysis\\
\textit{2020 Mathematics subject classification:} Primary: 30H10,46G20,30B50. Secondary: 47A10 }

\section{Introduction}

A Dirichlet series is a formal expression of the type $D=\sum a_n n^{-s}$ with $(a_n)$ complex values and $s$ a complex variable. 
These are one of the basic tools of analytic number theory (see e.g., \cite{apostol1984introduccion, tenenbaum_1995}) but, over the last two decades, as a result of the work initiated in \cite{hedenmalm1997hilbert} and \cite{konyaginqueffelec_2002}, they have been analyzed with techniques coming from harmonic and functional analysis (see e.g. \cite{queffelec2013diophantine} or \cite{defant2018Dirichlet} and the references therein). One of the key point in this analytic insight on Dirichlet series is the deep connection with power series in infinitely many variables. We will use this fruitful perspective to study multipliers for Hardy spaces of Dirichlet series. We begin by recalling some standard definitions of these spaces.

The natural regions of convergence of Dirichlet series are half-planes, and there they define holomorphic functions.
To settle some notation, we consider the set $\mathbb{C}_{\sigma} = \{ s \in \mathbb{C} \colon \re s > \sigma\}$, for $\sigma \in \mathbb{R}$.
With this, Queff\'elec \cite{Quefflec95} defined the space  $\mathcal{H}_{\infty}$ as that consisting of Dirichlet series that  define a bounded, holomorphic function on the half-plane $\mathbb{C}_{0}$. Endowed with the norm $\Vert D \Vert_{\mathcal{H}_\infty} := \sup\limits_{s\in \mathbb{C}_0} \vert \sum \frac{a_n}{n^s} \vert < \infty$ it becomes a Banach space, which together with the product $(\sum a_n n^{-s})\cdot (\sum b_n b^{-s}) = \sum\limits_{n =1}^{\infty} \big(\sum\limits_{k\cdot j = n} a_k\cdot b_j \big) n^{-s}$ results a Banach algebra.

The Hardy spaces of Dirichlet series $\mathcal{H}_p$ were introduced by Hedenmalm, Lindqvist and Seip \cite{hedenmalm1997hilbert} for $p=2$, and by Bayart \cite{bayart2002hardy} for the remaining cases in the range $1\leq p < \infty$. A way to define these spaces is to consider first the following norm in the space of  Dirichlet polynomials (i.e., all finite sums of the form $\sum_{n=1}^{N} a_{n} n^{-s}$, with $N \in \mathbb{N}$),

\[
\Big\Vert \sum_{n=1}^{N} a_{n} n^{-s} \Big\Vert_{\mathcal{H}_p}
	:= \lim_{R \to \infty} \bigg( \frac{1}{2R} \int_{-R}^{R} \Big\vert \sum_{n=1}^{N} a_{n} n^{-it} \Big\vert^{p}  dt  \bigg)^{\frac{1}{p}} \,,
\]
and define $\mathcal{H}_p$ as the completion of the Dirichlet polynomials under this norm. Each Dirichlet series in some $\mathcal{H}_{p}$ (with $1 \leq p < \infty$) converges on $\mathbb{C}_{1/2}$, and there it defines a holomorphic function.

The Hardy space $\mathcal H_p$ with the function product is not an algebra for $p<\infty$. Namely, given two Dirichlet series $D, E \in \mathcal{H}_p$, it is not true, in general, that the product function $D\cdot E$ belongs to $\mathcal{H}_p$. Nevertheless, there are certain series $D$ that verify that $D \cdot E \in \mathcal{H}_p$ for every $E \in \mathcal{H}_p$. 
Such a Dirichlet  series $D$ is called a multiplier of $\mathcal{H}_p$ and the mapping $M_D: \mathcal{H}_p \to \mathcal{H}_p$, given by $M_D(E)= D\cdot E$, is referred as its associated multiplication operator. 

In \cite{bayart2002hardy} (see also \cite{defant2018Dirichlet, hedenmalm1997hilbert,queffelec2013diophantine}) it is proved that the multipliers of $\mathcal{H}_p$ are precisely those Dirichlet series that belong to the Banach space $\mathcal{H}_\infty$. Moreover, for a multiplier $D$ we have the following equality:
\[
\Vert M_D \Vert_{\mathcal H_p \to \mathcal H_p} = \Vert D \Vert_{\mathcal H_{\infty}}.
\]

Given $1 \leq p, q \leq \infty$, we propose to study the multipliers of $\mathcal{H}_p$ to $\mathcal{H}_q$; that is, we want to understand those Dirichlet series $D$ which verify that $D\cdot E \in \mathcal{H}_q$ for every $E \in \mathcal{H}_p$. 

For this we use the relation that exists between the Hardy spaces of Dirichlet series and the Hardy spaces of functions. The mentioned connection is given by the so-called Bohr lift $\mathcal{L}$, which identifies each Dirichlet series with a function (both in the polytorus and in the polydisk; see below for more details).

This identification allows us to relate the multipliers in spaces of Dirichlet series with those of function spaces. 
As consequence of our results, we obtain a complete characterization of $\mathfrak{M}(p,q)$, the space of multipliers of $\mathcal{H}_p$ into $\mathcal{H}_q$.
 It turns out that this set coincides with the Hardy space $\mathcal{H}_{pq/(p-q)}$ when $1\leq q<p \leq \infty$ and with the null space if $1 \leq p<q \leq \infty$. 
Precisely, for a multiplier $D \in \mathfrak{M}(p,q)$ where $1\leq q<p \leq \infty$ we have the isometric correspondence
\[
\Vert M_D \Vert_{\mathcal H_p \to \mathcal H_q} = \Vert D \Vert_{\mathcal H_{pq/(p-q)}}.
\]

Moreover, for certain values of $p$ and $q$ we study some structural properties of these multiplication operators.
Inspired by  some of the results obtained by Vukoti\'c \cite{vukotic2003analytic} and Demazeux \cite{demazeux2011essential} for spaces of holomoprhic functions in one variable, we get the corresponding version in the Dirichlet space context. 
In particular, when considering endomorphisms (i.e., $p=q$), the essential norm and the operator norm of a given multiplication operator coincides if $p>1$. In the remaining cases, that is $p=q=1$ or $1\leq q < p \leq \infty$, we compare the essential norm with the norm of the multiplier in different Hardy spaces.

We continue by studying the structure of the spectrum of  the multiplication operators over $\mathcal{H}_p$. Specifically, we consider the continuum spectrum, the radial spectrum and the approximate spectrum.
For the latter,  we use some necessary and sufficient conditions regarding the associated Bohr lifted function $\mathcal{L}(D)$ (see definition below) for which the multiplication operator $M_D : \mathcal H_p \to \mathcal{H}_p$ has closed range. 
\section{Preliminaries on Hardy spaces}

\subsection{Of holomorphic functions}

We note by $\mathbb{D}^{N} = \mathbb{D} \times \mathbb{D} \times \cdots$ the cartesian product of $N$ copies of the open unit disk $\mathbb{D}$ with $N\in \mathbb{N}\cup \{\infty\}$ and  $\mathbb{D}^{\infty}_{2}$ the domain in $\ell_2$ defined as $\ell_2 \cap \mathbb{D}^{\infty}$  (for coherence in the notation we will sometimes write $\mathbb{D}^N_2$ for $\mathbb{D}^N$ also in the case $N\in \mathbb{N}$).
We define $\mathbb{N}_0^{(\mathbb{N})}$ as consisting of all sequences $\alpha = (\alpha_{n})_{n}$ with $\alpha_{n} \in \mathbb{N}_{0} = \mathbb{N} \cup \{0\}$ which are eventually null. In this case we denote $\alpha ! := \alpha_1! \cdots \alpha_M!$ whenever $\alpha = (\alpha_1, \cdots, \alpha_M, 0,0,0, \dots)$.

A function $f: \mathbb{D}^{\infty}_2 \to \mathbb{C}$ is holomorphic if it is Fr\'echet differentiable at every $z\in \mathbb{D}^{\infty}_2$, that is, if there exists a continuous linear functional $x^*$ on $\ell_2$ such that 
\[
\lim\limits_{h\to 0} \frac{f(z+h)-f(z)- x^*(h)}{\Vert h \Vert}=0.
\]
We denote by $H_{\infty} (\mathbb{D}^{\infty}_2)$ the space of all bounded holomorphic functions $f : \mathbb{D}^\infty_2 \to \mathbb{C}$. 
For $1\leq  p< \infty$ we consider the Hardy spaces of holomorphic functions on the domain $\mathbb{D}^{\infty}_2$  defined by 
\begin{multline*}
  H_p(\mathbb{D}^\infty_2) :=\{ f : \mathbb{D}^\infty_2 \to \mathbb{C} : \; f \; \text{is holomorphic and } \\ \Vert f \Vert_{H_p(\mathbb{D}_2^\infty)} := \sup\limits_{M\in \mathbb{N}} \sup\limits_{ 0<r<1} \left( \int\limits_{\mathbb{T}^M} \vert f(r\omega, 0) \vert^p \mathrm{d}\omega \right)^{1/p} <\infty \}.  
\end{multline*}

The definitions of $H_{\infty} (\mathbb{D}^{N})$ and  $H_p(\mathbb{D}^{N})$ for finite $N$ are analogous (see  \cite[Chapters~13 and~15]{defant2018Dirichlet}).\\

For $N \in \mathbb{N} \cup \{ \infty \}$, each function $f\in H_p(\mathbb{D}^N_2)$ defines a unique family of coefficients $c_{\alpha}(f)= \frac{(\partial^{\alpha} f)(0)}{\alpha !}$ (the Cauchy coefficients) with $\alpha \in \mathbb{N}_0^{N}$ having always only finitely many non-null coordinates.
For $z \in \mathbb{D}^N_2$ one has the following monomial expansion \cite[Theorem~13.2]{defant2018Dirichlet} 
\[
f(z)= \sum\limits_{\alpha \in \mathbb{N}_0^{(\mathbb{N})}} c_{\alpha}(f) \cdot z^\alpha,
\] 
with $z^{\alpha} = z_1^{\alpha_1} \cdots z_M^{\alpha_M}$ whenever $\alpha = (\alpha_1, \cdots, \alpha_M, 0,0,0, \dots)$.\\

Let us note that for each fixed $N \in \mathbb{N}$ and $1 \leq p \leq \infty$ we have $H_{p}(\mathbb{D}^{N})  \hookrightarrow H_{p}(\mathbb{D}_{2}^{\infty})$ by doing $f \rightsquigarrow [ z = (z_{n})_{n} \in \mathbb{D}_{2}^{\infty} \rightsquigarrow f(z_{1}, \ldots z_{N})  ]$. Conversely, given a function $f \in H_{p}(\mathbb{D}_{2}^{\infty})$, for each $N \in \mathbb{N}$ we define $f_{N} (z_{1}, \ldots , z_{N}) = f (z_{1}, \ldots , z_{N}, 0,0, \ldots)$ 
for $(z_{1}, \ldots , z_{N}) \in \mathbb{D}^{N}$. It is well known  that $f_N \in H_p(\mathbb{D}^N)$.

An important property for our purposes  is the so-called Cole-Gamelin inequality (see \cite[Remark~13.14 and Theorem~13.15]{defant2018Dirichlet}), which states that for every $f\in H_p(\mathbb{D}^{N}_2)$ and $z \in \mathbb{D}^{N}_2$ (for $N \in \mathbb{N} \cup \{\infty\}$) we have
\begin{equation}\label{eq: Cole-Gamelin}
\vert f(z) \vert \leq \left( \prod\limits_{j=1}^{N} \frac{1}{1-\vert z_j \vert^2} \right)^{1/p} \Vert f \Vert_{H_p(\mathbb{D}^N_2)}.
\end{equation}
For functions of finitely many variable this inequality is optimal in the sense that if $N\in \mathbb{N}$ and $z\in \mathbb{D}^N$, then there is a function $f_z \in H_p(\mathbb{D}^N_2)$ given by
\begin{equation} \label{optima}
f_z(u) = \left( \prod\limits_{j=1}^N \frac{1- \vert z_j\vert^2}{(1- \overline{z}_ju_j)^2}\right)^{1/p},
\end{equation}
such that $\Vert f_z \Vert_{H_p(\mathbb{D}^N_2)} = 1$ and $\vert f_z(z) \vert = \left( \prod\limits_{j=1}^N \frac{1}{1-\vert z_j \vert^2} \right)^{1/p}$.

\subsection{On the polytorus}

On $\mathbb{T}^\infty = \{ \omega = ( \omega_{n})_{n} \colon \vert \omega_{n} \vert =1, \text{ for every } n  \}$ consider the product of the normalized Lebesgue measure on $\mathbb{T}$ (note that this is the Haar measure). For each $F \in L_1(\mathbb{T}^\infty)$ and $\alpha \in \mathbb{Z}^{(\mathbb{N})}$, the $\alpha-$th Fourier coefficient of $F$ is defined as
\[
\hat{F}(\alpha) = \int\limits_{\mathbb{T}^N} f(\omega) \cdot \omega^{\alpha} \mathrm{d}\omega 
\]
where again $\omega^{\alpha} = \omega_1^{\alpha_1}\cdots \omega_M^{\alpha_M}$ if $\alpha = (\alpha_{1}, \ldots , \alpha_{M}, 0,0,0, \ldots)$. 
The Hardy space on the polytorus $H_p(\mathbb{T}^\infty)$ is the subspace of $L_p(\mathbb{T}^\infty)$ given by all the functions $F$ such that $\hat{F}(\alpha)=0$ for every $\alpha \in \mathbb{Z}^{(\mathbb{N})} - \mathbb{N}_0^{(\mathbb{N})}$. The definition of $H_{p} (\mathbb{T}^{N})$ for finite $N$ is analogous (note that these are the classical Hardy spaces, see \cite{rudin1962fourier}).
We have the canonical inclusion $H_{p}(\mathbb{T}^{N})  \hookrightarrow H_{p}(\mathbb{T}^{\infty})$ by doing $F \rightsquigarrow [ \omega = (\omega_{n})_{n} \in \mathbb{T}^{\infty} \rightsquigarrow F(\omega_{1}, \ldots \omega_{N})  ]$.\\

Given $N_1 < N_2 \leq \infty$ and $F\in H_p(\mathbb{T}^{N_2})$, then the function $F_{N_1}$, defined by $F_{N_1}(\omega)= \int\limits_{\mathbb{T}^{N_2-N_1}} F(\omega,u)\mathrm{d}u$ for every $\omega\in \mathbb{T}^{N_1}$, belongs to $H_{p}(\mathbb{T}^{N_1})$. In this case, the Fourier coefficients of both functions coincide: that is, given $\alpha \in \mathbb{N}_0^{N_1}$ then 
\[
\hat{F}_{N_1}(\alpha)= \hat{F}(\alpha_1, \alpha_2, \dots, \alpha_{N_1},0,0, \dots).
\]
Moreover, 
\begin{equation*}
\Vert F \Vert_{H_p(\mathbb{T}^{N_2})} \geq \Vert F_{N_1} \Vert_{H_p(\mathbb{T}^{N_1})}.
\end{equation*}
Let $N \in \mathbb{N} \cup \{\infty\}$, there is an isometric isomorphism between the spaces $H_{p}(\mathbb{D}^N_2)$ and $H_p(\mathbb{T}^N)$. More precisely, given a function $f\in H_p(\mathbb{D}^N_2)$  there is a unique function $F\in H_p(\mathbb{T}^N)$ such that $c_{\alpha}(f) = \hat{F}(\alpha)$ for every $\alpha$ in the corresponding indexing set and $\Vert f \Vert_{H_{p}(\mathbb{D}^N_2)} =\Vert F \Vert_{H_p(\mathbb{T}^N)}$. If this is the case, we say that the functions $f$ and $F$ are associated. In particular, by the uniqueness of the coefficients, $f_{M}$ and $F_{M}$ are associated to each other for every $1 \leq M \leq  N$.
Even more, if $N\in \mathbb{N}$, then 
\[
F(\omega) = \lim\limits_{r\to 1^-} f(r\omega),
\]
for almost all $\omega \in \mathbb{T}^N$.

\noindent We isolate the following important property which will be useful later. 

\begin{remark} \label{manon}
Let $F \in H_p(\mathbb{T}^\infty)$.  If $1 \leq p < \infty$, then $F_{N} \to F$ in $H_{p}(\mathbb{T}^{\infty})$ (see e.g \cite[Remark~5.8]{defant2018Dirichlet}). If $p=\infty$, the convergence is given in the $w(L_{\infty},L_1)$-topology. 
 In particular, for any $1 \leq p \leq \infty$, there is a subsequence so that $\lim_{k} F_{N_{k}} (\omega) = F(\omega)$ for almost $\omega \in \mathbb{T}^{\infty}$ (note that the case $p=\infty$ follows directly from the inclusion $H_{\infty}(\mathbb{T}^\infty) \subset H_2(\mathbb{T}^\infty)$).
\end{remark}

\subsection{Bohr transform}

We previously mentioned the Hardy spaces of functions both on the polytorus and on the polydisk and the relationship between them based on their coefficients. 
This relation also exists with the Hardy spaces of Dirichlet series and the isometric isomorphism that identifies them is the so-called Bohr transform. 
To define it, let us first consider $\mathfrak{p}= (\mathfrak{p}_1, \mathfrak{p}_2, \cdots)$ the sequence of prime numbers. Then, given a natural number $n$, by the prime number decomposition, there are unique  non-negative integer numbers $\alpha_1, \dots , \alpha_M$ such that $n= \mathfrak{p}_1^{\alpha_1}\cdots \mathfrak{p}_M^{\alpha_M}$. 
Therefore, with the notation that we already defined, we have that $n= \mathfrak{p}^{\alpha}$ with $\alpha = (\alpha_1, \cdots, \alpha_M, 0,0, \dots)$.  Then, given $1\leq p \leq \infty$, the Bohr transform $\mathcal{B}_{\mathbb{D}^\infty_2}$ on $H_p(\mathbb{D}^\infty_2)$ is defined as follows: 
\[
\mathcal{B}_{\mathbb{D}^\infty_2}(f) = \sum\limits_n a_n n^{-s},
\]
where $a_n= c_{\alpha}(f)$ if and only if $n= \mathfrak{p}^{\alpha}$. The Bohr transform is an isometric isomorphism between the spaces $H_p(\mathbb{D}^{\infty}_2)$ and $\mathcal{H}_p$ (see \cite[Theorem~13.2]{defant2018Dirichlet}). 

We denote by $\mathcal H^{(N)}$ the set of all Dirichlet series $\sum a_{n} n^{-s}$ that involve only the first $N$ prime numbers; that is $a_n=0$ if $\mathfrak{p}_i$ divides $n$ for some $i>N$.
We write $\mathcal{H}_p^{(N)}$ for the space $\mathcal H^{(N)} \cap \mathcal H_p$ (endowed with the norm in $\mathcal H_p$). Note that  the image of $H_{p} (\mathbb{D}^{N})$ (seen as a subspace of $H_p(\mathbb{D}^{\infty}_2)$ with the natural identification) through  $\mathcal{B}_{\mathbb{D}^\infty_2}$ is exactly $\mathcal{H}_p^{(N)}$.

The inverse of the Bohr transform, which sends the space $\mathcal{H}_p$ into the space $H_p(\mathbb{D}^{\infty}_2)$, is called the \textit{Bohr lift},  which  we denote by $\mathcal{L}_{\mathbb{D}^\infty_2}$.

With the same idea, the Bohr transform $\mathcal{B}_{\mathbb{T}^\infty}$ on the polytorus for $H_p(\mathbb{T}^\infty)$ is defined; that is, 
\[
\mathcal{B}_{\mathbb{T}^\infty}(F) = \sum\limits_n a_n n^{-s},
\]
where $a_n = \hat{F}(\alpha)$ if and only if $n = \mathfrak{p}^\alpha$. It is an isometric ismorphism between the spaces $H_p(\mathbb{T}^N)$ and $\mathcal{H}_p$. Its inverse is denoted by $\mathcal{L}_{\mathbb{T}^\infty}$.

In order to keep the notation as clear as possible we will carefully use the following convention: we will use capital letters (e.g., $F$, $G$, or $H$) to denote functions defined on the polytorus $\mathbb{T}^{\infty}$ and lowercase letters (e.g., $f$, $g$ or $h$) to represent functions defined on the polydisk $\mathbb{D}_2^\infty$.  If $f$ and $F$ are associated to each other (meaning that $c_{\alpha}(f)= \hat{F}(\alpha)$ for every $\alpha$), we will sometimes write $f \sim F$. With the same idea, if a function $f$ or $F$ is associated  through the Bohr transform to a Dirichlet series $D$, we will write $f \sim D$ or $F\sim D$.

\section{The space of multipliers}

As we mentioned above, our main interest is to describe the multipliers of the Hardy spaces of Dirichlet series. Let us recall again that a holomorphic function $\varphi$, defined on $\mathbb{C}_{1/2}$ is a $(p,q)$-multiplier of $\mathcal{H}_{p}$ if $\varphi \cdot D \in \mathcal{H}_{q}$ for every $D \in \mathcal{H}_{p}$. We denote the set of all such functions by $\mathfrak{M}(p,q)$. Since the constant  $\mathbf{1}$ function belongs to $\mathcal{H}_{p}$ we have
that, if $\varphi \in \mathfrak{M}(p,q)$, then necessarily $\varphi$ belongs to $\mathcal{H}_{q}$ and it can be represented by a Dirichlet series. So, we will use that the multipliers of $\mathcal{H}_{p}$ are 
precisely Dirichlet series. The set $\mathfrak{M}^{(N)}(p,q)$ is defined in the obvious way, replacing $\mathcal{H}_{p}$ and $\mathcal{H}_{q}$ by  $\mathcal{H}_{p}^{(N)}$ and $\mathcal{H}_{q}^{(N)}$. The same argument as above shows that $\mathfrak{M}^{(N)}(p,q) \subseteq \mathcal{H}_{q}^{(N)}$.\\

The set $\mathfrak{M}(p,q)$ is clearly a vector space. Each Dirichlet series $D \in \mathfrak{M}(p,q)$ induces a multiplication operator $M_D$ from $\mathcal{H}_p$ to $\mathcal{H}_q$, defined by $M_D(E)=D\cdot E$. By the continuity of the evaluation on each $s \in \mathbb{C}_{1/2}$ (see e.g. \cite[Corollary 13.3]{defant2018Dirichlet}), and the Closed Graph Theorem, $M_D$ is continuous. Then, the expression
\begin{equation} \label{normamult}
\Vert D \Vert_{\mathfrak{M}(p,q)} := \Vert M_{D}  \Vert_{\mathcal{H}_{p} \to \mathcal{H}_{q}},
\end{equation}
defines a norm on $\mathfrak{M}(p,q)$. Note that
\begin{equation} \label{aleluya}
\Vert D \Vert_{\mathcal{H}_{q}} = \Vert M_D(1) \Vert_{\mathcal{H}_{q}} \leq \Vert M_D \Vert_{\mathcal{H}_{p} \to \mathcal{H}_{q}} \cdot \Vert 1 \Vert_{\mathcal{H}_{q}} = \Vert D \Vert_{\mathfrak{M}(p,q)} \,,
\end{equation}
and the inclusions that we presented above are continuous. A norm on $\mathfrak{M}^{(N)}(p,q)$ is defined analogously. \\
Clearly, if $p_{1}< p_{2}$ or $q_{1} < q_{2}$, then
\begin{equation}\label{inclusiones}
\mathfrak{M}(p_{1}, q) \subseteq \mathfrak{M}(p_{2},q) \text{ and }
\mathfrak{M}(p, q_{2}) \subseteq \mathfrak{M}(p,q_{1}) \,,
\end{equation}
for fixed $p$ and $q$.

Given a Dirichlet series $D = \sum a_{n} n^{-s}$, we denote by $D_{N}$ the `restriction' to the first $N$ primes (i.e., we consider those $n$'s that involve, in its factorization, only the first $N$ primes). Let us be more precise. If  $n \in \mathbb{N}$, we write $\gpd (n)$ for the greatest prime divisor of $n$. That is, if $n = \mathfrak{p}_1^{\alpha_{1}} \cdots \mathfrak{p}_N^{\alpha_{N}}$ (with $\alpha_{N} \neq 0$) is the prime decomposition of $n$, then $\gpd(n) = \mathfrak{p}_{N}$. With this notation, $D_{N} := \sum_{\gpd(n) \leq \mathfrak{p}_N} a_{n} n^{-s}$. 

\begin{proposition} \label{hilbert}
Let $D = \sum a_{n} n^{-s}$ be a Dirichlet series and $1 \leq p,q \leq \infty$. Then $D \in \mathfrak{M}(p,q)$ if and only if $D_{N} \in \mathfrak{M}^{(N)}(p,q)$ for every $N \in \mathbb{N}$ and $\sup_{N} \Vert D_{N} \Vert_{\mathfrak{M}^{(N)}(p,q)} < \infty$.
\end{proposition}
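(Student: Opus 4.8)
The plan is to prove the two implications and, along the way, the sharper identity $\Vert D\Vert_{\mathfrak{M}(p,q)} = \lim_{N}\Vert D_{N}\Vert_{\mathfrak{M}^{(N)}(p,q)} = \sup_{N}\Vert D_{N}\Vert_{\mathfrak{M}^{(N)}(p,q)}$. Two elementary facts will be used repeatedly. First, restriction to the first $N$ primes, $E\rightsquigarrow E_{N}$, corresponds under the Bohr lift to the conditional expectation onto the first $N$ coordinates of the polytorus, and hence, by the properties recalled in Section~2, it is a norm-one projection of $\mathcal{H}_{r}$ onto $\mathcal{H}_{r}^{(N)}$ for every $1\le r\le\infty$; in particular $\Vert E_{N}\Vert_{\mathcal{H}_{r}}\le\Vert E\Vert_{\mathcal{H}_{r}}$. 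Second, for any two formal Dirichlet series $D$ and $E$ one has $(D\cdot E)_{N}=D_{N}\cdot E_{N}$: comparing Dirichlet coefficients, if $\gpd(n)\le\mathfrak{p}_{N}$ then in every factorization $n=jk$ both $j$ and $k$ are supported on the first $N$ primes, while if $\gpd(n)>\mathfrak{p}_{N}$ then $n$ has no factorization into two such numbers.

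For the necessity, I would assume $D\in\mathfrak{M}(p,q)$, fix $N$, and take $E\in\mathcal{H}_{p}^{(N)}$, so that $E_{N}=E$. Then $D_{N}\cdot E=(D\cdot E)_{N}$ by the second fact, and since $D\cdot E\in\mathcal{H}_{q}$ and truncation is contractive,
\[
\Vert D_{N}\cdot E\Vert_{\mathcal{H}_{q}} = \Vert (D\cdot E)_{N}\Vert_{\mathcal{H}_{q}} \le \Vert D\cdot E\Vert_{\mathcal{H}_{q}} \le \Vert D\Vert_{\mathfrak{M}(p,q)}\,\Vert E\Vert_{\mathcal{H}_{p}};
\]
in particular $D_{N}\cdot E\in\mathcal{H}_{q}^{(N)}$. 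Hence $D_{N}\in\mathfrak{M}^{(N)}(p,q)$ and $\Vert D_{N}\Vert_{\mathfrak{M}^{(N)}(p,q)}\le\Vert D\Vert_{\mathfrak{M}(p,q)}$ for all $N$, which also gives the supremum bound.

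For the sufficiency, I would assume $D_{N}\in\mathfrak{M}^{(N)}(p,q)$ for every $N$, put $C:=\sup_{N}\Vert D_{N}\Vert_{\mathfrak{M}^{(N)}(p,q)}<\infty$, fix an arbitrary $E\in\mathcal{H}_{p}$, and let $g:=D\cdot E$ be the formal product. Since $E_{N}\in\mathcal{H}_{p}^{(N)}$ with $\Vert E_{N}\Vert_{\mathcal{H}_{p}}\le\Vert E\Vert_{\mathcal{H}_{p}}$, the two facts give
\[
g_{N} = (D\cdot E)_{N} = D_{N}\cdot E_{N}\in\mathcal{H}_{q}^{(N)},\qquad \Vert g_{N}\Vert_{\mathcal{H}_{q}} \le \Vert D_{N}\Vert_{\mathfrak{M}^{(N)}(p,q)}\,\Vert E_{N}\Vert_{\mathcal{H}_{p}} \le C\,\Vert E\Vert_{\mathcal{H}_{p}}.
\]
So $g$ is a formal Dirichlet series all of whose prime truncations lie in $\mathcal{H}_{q}$ with norms bounded by $C\Vert E\Vert_{\mathcal{H}_{p}}$. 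Granting that this forces $g\in\mathcal{H}_{q}$ with $\Vert g\Vert_{\mathcal{H}_{q}}=\lim_{N}\Vert g_{N}\Vert_{\mathcal{H}_{q}}\le C\Vert E\Vert_{\mathcal{H}_{p}}$, and since $E$ was arbitrary, $D\in\mathfrak{M}(p,q)$ with $\Vert D\Vert_{\mathfrak{M}(p,q)}\le C$; combined with the previous step this yields the claimed identity.

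The step that needs care --- and which I expect to be the main obstacle --- is the last one: deducing $g\in\mathcal{H}_{q}$ from the uniform bound $\sup_{N}\Vert g_{N}\Vert_{\mathcal{H}_{q}}<\infty$ (the matching lower bound $\Vert g\Vert_{\mathcal{H}_{q}}\ge\sup_{N}\Vert g_{N}\Vert_{\mathcal{H}_{q}}$ is again just contractivity of truncation). On the polytorus the functions $\mathcal{L}_{\mathbb{T}^{\infty}}(g_{N})$ form a bounded martingale in $L_{q}(\mathbb{T}^{\infty})$; for $1<q<\infty$ one could conclude using reflexivity, weak compactness and the continuity of the coefficient functionals, but this argument collapses at $q=1$. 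The route that is uniform in $1\le q\le\infty$ is through the polydisk: let $h_{N}$ be the holomorphic function on $\mathbb{D}^{N}$ with $h_{N}\sim g_{N}$. These have coherent Cauchy coefficients, $c_{\alpha}(h_{N+1})=c_{\alpha}(h_{N})$ for $\alpha\in\mathbb{N}_{0}^{N}$, and by the Cole--Gamelin inequality~\eqref{eq: Cole-Gamelin} they are uniformly bounded on every compact subset of $\mathbb{D}^{\infty}_{2}$ (for $q=\infty$, uniformly bounded on all of $\mathbb{D}^{\infty}_{2}$ by $\sup_{N}\Vert g_{N}\Vert_{\mathcal{H}_{\infty}}$). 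A Montel-type (normal families) argument then yields a holomorphic function $h$ on $\mathbb{D}^{\infty}_{2}$ with $c_{\alpha}(h)=c_{\alpha}(g)$ for all $\alpha$, and by the very definition of $H_{q}(\mathbb{D}^{\infty}_{2})$ one has $\Vert h\Vert_{H_{q}(\mathbb{D}^{\infty}_{2})}=\sup_{N}\Vert h_{N}\Vert_{H_{q}(\mathbb{D}^{N})}=\sup_{N}\Vert g_{N}\Vert_{\mathcal{H}_{q}}$; hence $g=\mathcal{B}_{\mathbb{D}^{\infty}_{2}}(h)\in\mathcal{H}_{q}$ (see \cite[Chapter~13]{defant2018Dirichlet}). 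Since no density of Dirichlet polynomials is used anywhere in the proof, the endpoints $p=\infty$ and $q=\infty$ need no separate treatment.
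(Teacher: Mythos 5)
Your proof is correct and follows essentially the same route as the paper: the identity $(DE)_{N}=D_{N}E_{N}$ together with contractivity of truncation gives the necessity, and the uniform bound $\Vert (DE)_{N}\Vert_{\mathcal{H}_{q}}\le C\Vert E\Vert_{\mathcal{H}_{p}}$ gives the sufficiency. The only difference is that for the final step (a Dirichlet series whose prime truncations are uniformly bounded in $\mathcal{H}_{q}$ already lies in $\mathcal{H}_{q}$) the paper simply invokes \cite[Corollary~13.9]{defant2018Dirichlet}, whereas you sketch a proof of that fact via normal families on the polydisk.
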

\begin{proof}
Let us begin by noting that, if $n=jk$, then clearly $\gpd (n) \leq \mathfrak{p}_{N}$ if and only if $\gpd (j) \leq \mathfrak{p}_{N}$ and $\gpd (k) \leq \mathfrak{p}_{N}$. From this we deduce that, given any two Dirichlet series $D$ and $E$, we have $(DE)_{N}= D_{N} E_{N}$ 
for every $N \in \mathbb{N}$. \\
Take some Dirichlet series $D$ and suppose that $D \in \mathfrak{M}(p,q)$. Then, given $E \in \mathcal{H}_{p}^{(N)}$ we have $DE \in \mathcal{H}_{q}$, and $(DE)_{N} \in  \mathcal{H}_{q}^{(N)}$. But $(DE)_{N} = D_{N} E_{N} = D_{N} E$ 
and, since $E$ was arbitrary, $D_{N} \in \mathfrak{M}^{(N)}(p,q)$ for every $N$. On the other hand, if $E \in \mathcal{H}_{q}$, then $E_{N} \in \mathcal{H}_{q}^{(N)}$ and $\Vert E_{N} \Vert_{\mathcal{H}_q} \leq \Vert E \Vert_{\mathcal{H}_q}$ (see
\cite[Corollary~13.9]{defant2018Dirichlet}). This gives $\Vert D_{N} \Vert_{\mathfrak{M}^{(N)}(p,q)} \leq  \Vert D \Vert_{\mathfrak{M}(p,q)}$ for every $N$.\\Suppose now that $D$ is such that $D_{N} \in \mathfrak{M}^{(N)}(p,q)$ for every $N$ and $ \sup_{N} \Vert D_{N} \Vert_{\mathfrak{M}^{(N)}(p,q)} < \infty$ (let us call it $C$). Then, for each $E \in \mathcal{H}_{p}$ we have, by \cite[Corollary~13.9]{defant2018Dirichlet},
\[
\Vert (DE)_{N} \Vert_{\mathcal{H}_p} = \Vert D_{N} E_{N} \Vert_{\mathcal{H}_p} \leq  \Vert D_{N} \Vert_{\mathfrak{M}^{(N)}(p,q)} \Vert E_{N} \Vert_{\mathcal{H}_p} \leq C \Vert E \Vert_{\mathcal{H}_p} \,.
\]
Since this holds for every $N$, it shows (again by \cite[Corollary~13.9]{defant2018Dirichlet}) that $DE \in \mathcal{H}_{p}$ and completes the proof.
\end{proof}

We are going to exploit the connection between Dirichlet series and power series in infinitely many variables. This leads us to consider spaces of multipliers on Hardy spaces of functions. If $U$ is either $\mathbb{T}^{N}$ or $\mathbb{D}_{2}^{N}$ 
(with $N \in \mathbb{N} \cup \{\infty\}$) we consider the corresponding Hardy spaces $H_{p}(U)$ (for $1 \leq p \leq \infty$), and say that a function $f$ defined on $U$ is a $(p,q)$-multiplier of $H_{p}(U)$ if $ f \cdot g \in H_{q}(U)$ for every 
$f \in H_{p}(U)$. We denote the space of all such fuctions by $\mathcal{M}_{U}(p,q)$. The same argument as before with the constant $\mathbf{1}$ function shows that $\mathcal{M}_{U} (p,q) \subseteq H_{q}(U)$. 
Also, each multiplier defines a multiplication operator $M : H_{p}(U) \to H_{q}(U)$ which, by the Closed Graph Theorem, is continuous, and the norm of the operator defines a norm on the space of multipliers, as in \eqref{normamult}.\\

Our first step is to see that the identifications that we have just shown behave `well' with the multiplication, in the sense that whenever two pairs of functions are identified to each other, then so also are the products. Let us make a precise statement.

\begin{theorem} \label{jonas}
Let $D,E \in \mathcal{H}_{1}$, $f,g \in H_{1} (\mathbb{D}_{2}^{\infty})$ and $F,G  \in H_{1} (\mathbb{T}^{\infty})$  so that $f \sim F \sim D$ and $g \sim G \sim E$. Then, the following are equivalent
\begin{enumerate}
\item \label{jonas1} $DE \in \mathcal{H}_{1}$
\item \label{jonas2} $fg \in H_{1} (\mathbb{D}_{2}^{\infty})$
\item \label{jonas3} $FG  \in H_{1} (\mathbb{T}^{\infty})$
\end{enumerate}
and, in this case $DE \sim fg \sim FG$.
\end{theorem}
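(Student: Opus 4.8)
The plan is to prove the cycle of implications, exploiting the isometric isomorphisms $\mathcal{B}_{\mathbb{D}^\infty_2}$ and $\mathcal{B}_{\mathbb{T}^\infty}$ between $\mathcal{H}_1$ and $H_1(\mathbb{D}^\infty_2)$, $H_1(\mathbb{T}^\infty)$ respectively, together with the truncation/restriction machinery. The key observation is that multiplication is handled cleanly in finitely many variables, where $H_1(\mathbb{D}^N)$, $H_1(\mathbb{T}^N)$, and $\mathcal{H}_1^{(N)}$ are all genuinely comparable: for Dirichlet polynomials (or functions of finitely many variables, after passing to polynomials), all three notions of product literally agree at the level of coefficients, since the Bohr correspondence sends pointwise multiplication of functions on $\mathbb{T}^N$ (and boundary products on $\mathbb{D}^N$) to the Dirichlet convolution product. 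So the strategy is: reduce each of the three statements to a uniform bound on the $H_1$-norms of the truncations $D_N E_N$, $f_N g_N$, $F_N G_N$, show these three quantities coincide for each fixed $N$, and conclude.

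Concretely, first I would record that $f \sim F \sim D$ implies $f_N \sim F_N \sim D_N$ for every $N$ (this is stated in the preliminaries: passing to the first $N$ variables corresponds on the polytorus to integrating out the remaining variables, and on the Dirichlet side to keeping $\gpd(n) \le \mathfrak p_N$, and all of these preserve the matching of coefficients). Next, for fixed $N$, I claim $\|D_N E_N\|_{\mathcal{H}_1} = \|f_N g_N\|_{H_1(\mathbb{D}^N)} = \|F_N G_N\|_{H_1(\mathbb{T}^N)}$ in the sense that one side is finite iff the others are, with equality of norms. The cleanest route: approximate $f_N$ and $g_N$ in $H_1(\mathbb{D}^N)$ by polynomials $P_k \to f_N$, $Q_k \to g_N$; it is not immediate that $P_k Q_k \to f_N g_N$ in $H_1$, so instead I would use the boundary-value description — $F_N(\omega) = \lim_{r\to 1^-} f_N(r\omega)$ a.e. on $\mathbb{T}^N$ — so that $F_N G_N$ is the a.e. boundary function of $f_N g_N$, and $f_N g_N \in H_1(\mathbb{D}^N)$ iff $F_N G_N \in H_1(\mathbb{T}^N)$ with equal norms, by the classical finite-dimensional Hardy space identification. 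The equality with $\|D_N E_N\|_{\mathcal{H}_1}$ is then just the Bohr isometry applied to the product, once one checks the product of the associated functions corresponds to the Dirichlet product $D_N E_N$ — which is the coefficient identity $\widehat{F_N G_N}(\alpha) = \sum_{\beta+\gamma=\alpha}\hat F_N(\beta)\hat G_N(\gamma)$, valid because $F_N, G_N \in H_1$ are holomorphic-type (all Fourier coefficients supported on $\mathbb{N}_0^N$), combined with $\mathfrak p^\beta \mathfrak p^\gamma = \mathfrak p^{\beta+\gamma}$.

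With the per-$N$ equality in hand, I would invoke the convergence result \cite[Corollary~13.9]{defant2018Dirichlet} (used already in Proposition~\ref{hilbert}): a Dirichlet series lies in $\mathcal{H}_1$ iff its truncations $D_N$ lie in $\mathcal{H}_1^{(N)}$ with $\sup_N \|D_N\|_{\mathcal{H}_1} < \infty$, and then $D_N \to D$; likewise $f_N \to f$ in $H_1(\mathbb{D}^\infty_2)$ and $F_N \to F$ in $H_1(\mathbb{T}^\infty)$ by Remark~\ref{manon}. So \eqref{jonas1} says $\sup_N \|(DE)_N\|_{\mathcal{H}_1} = \sup_N \|D_N E_N\|_{\mathcal{H}_1} < \infty$, and similarly \eqref{jonas2}, \eqref{jonas3} say the corresponding suprema of $\|f_N g_N\|$, $\|F_N G_N\|$ are finite. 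Since for each $N$ these three numbers coincide, the three conditions are equivalent. Finally, when they hold, $DE \in \mathcal{H}_1$ has truncations $(DE)_N = D_N E_N$ whose associated functions are $f_N g_N$ and $F_N G_N$; passing to the limit (using that $c_\alpha$ and $\hat\cdot(\alpha)$ are continuous on $H_1$, which follows from Cole–Gamelin \eqref{eq: Cole-Gamelin} and evaluation continuity) gives that the associated function of $DE$ has the same coefficients as $fg$ and $FG$, i.e. $DE \sim fg \sim FG$.

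The main obstacle is the per-$N$ identification $\|f_N g_N\|_{H_1(\mathbb{D}^N)} = \|F_N G_N\|_{H_1(\mathbb{T}^N)}$, i.e. justifying that the product of two $H_1$ functions of $N$ variables behaves well under the polydisk–polytorus correspondence even though $H_1 \cdot H_1 \not\subseteq H_1$ in general; the resolution is to work with radial limits and the fact that a holomorphic function on $\mathbb{D}^N$ whose radial boundary values are $L_1$ and have spectrum in $\mathbb{N}_0^N$ is exactly an element of $H_1(\mathbb{D}^N)$ with matching norm — a finite-dimensional classical fact — rather than trying to push polynomial approximations through the (discontinuous) multiplication map.
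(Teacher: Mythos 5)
Your overall scheme (reduce to truncations, identify the three products level by level, pass to the limit via Remark~\ref{manon}) is close in spirit to the paper's, but the two places where you appeal to ``classical facts'' are exactly where the real content lies, and as stated both are gaps. First, the finite-dimensional identification you rely on --- ``a holomorphic function on $\mathbb{D}^N$ whose radial boundary values are $L_1$ and have spectrum in $\mathbb{N}_0^N$ is exactly an element of $H_1(\mathbb{D}^N)$'' --- is false: already for $N=1$ there exist nonzero holomorphic functions on $\mathbb{D}$ with radial limit $0$ almost everywhere, so radial boundary values (even with analytic spectrum) do not determine a holomorphic function, let alone force it into $H_1$. What saves the argument is that $f_Ng_N$ is a product of two $H_1$ functions and therefore lies in the Nevanlinna class $\mathcal{N}(\mathbb{D}^N)$; the paper's Proposition~\ref{nana} exploits exactly this, forming the difference $f_Ng_N-h$ (with $h$ the $H_1(\mathbb{D}^N)$ function associated to $F_NG_N$) and applying the uniqueness theorem for $\mathcal{N}(\mathbb{D}^N)$ from \cite[Theorem~3.3.5]{rudin1969function}. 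You need this Nevanlinna-class (or Smirnov-class) input explicitly; the analytic-spectrum hypothesis alone does not deliver it.

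Second, your reduction of condition~\ref{jonas3} to $\sup_N\Vert F_NG_N\Vert_{H_1(\mathbb{T}^N)}<\infty$ silently uses $(FG)_N=F_NG_N$, i.e.\ that integrating out the variables beyond the first $N$ commutes with the pointwise product. This is not a formal property of conditional expectations, and for $H_1$ functions it amounts to the convolution identity $\widehat{FG}(\alpha)=\sum_{\beta+\gamma=\alpha}\hat{F}(\beta)\hat{G}(\gamma)$, which is part of what the theorem asserts ($DE\sim FG$), so it cannot be assumed. Concretely, the implication \ref{jonas3}$\Rightarrow$\ref{jonas2} does not follow from your per-$N$ equalities, because starting from $FG\in H_1(\mathbb{T}^\infty)$ you have no a priori bound on $\Vert F_NG_N\Vert_{H_1(\mathbb{T}^N)}$. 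The paper proves this direction by a genuinely different device: $f$, $g$ and the function $h$ associated to $FG$ all belong to the infinite-dimensional Nevanlinna class $\mathcal{N}(\mathbb{D}_1^\infty)$ of \cite{guo2022dirichlet}, whose radial-limit uniqueness theorem forces $fg=h$ on $\mathbb{D}_1^\infty$ and then on $\mathbb{D}_2^\infty$ by density. Your remaining directions (equivalence of \ref{jonas1} and \ref{jonas2} through coefficients, and \ref{jonas2}$\Rightarrow$\ref{jonas3} via the supremum definition of the $H_1(\mathbb{D}_2^\infty)$ norm together with almost-everywhere limits of truncations) are sound once the finite-dimensional step is repaired.
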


The equivalence between~\ref{jonas2} and~\ref{jonas3} is based in the case for finitely many variables.
\begin{proposition} \label{nana}
Fix $N \in \mathbb{N}$ and let $f,g \in H_{1} (\mathbb{D}^{N})$ and $F,G  \in H_{1} (\mathbb{T}^{N})$  so that $f \sim F$ and $g \sim G$. Then, the following are equivalent
\begin{enumerate}
\item\label{nana2}  $fg \in H_{1} (\mathbb{D}^{N})$
\item\label{nana3} $FG  \in H_{1} (\mathbb{T}^{N})$
\end{enumerate}
and, in this case, $fg \sim FG$.
\end{proposition}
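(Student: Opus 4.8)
The plan is to use the isometric isomorphism between $H_1(\mathbb{D}^N)$ and $H_1(\mathbb{T}^N)$ together with the fact that, for finitely many variables, the function $F$ on the torus is the radial boundary value of $f$ on the polydisk, i.e. $F(\omega) = \lim_{r \to 1^-} f(r\omega)$ for almost every $\omega \in \mathbb{T}^N$. First I would prove the implication \ref{nana2} $\Rightarrow$ \ref{nana3}. Assume $fg \in H_1(\mathbb{D}^N)$ and let $h \in H_1(\mathbb{D}^N)$ be the product, with $H \in H_1(\mathbb{T}^N)$ its associated torus function. Since $fg$, $f$, $g$ are all holomorphic on $\mathbb{D}^N$, pointwise we have $h(r\omega) = f(r\omega)g(r\omega)$ for every $0 < r < 1$ and $\omega \in \mathbb{T}^N$. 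Now $f(r\omega) \to F(\omega)$ and $g(r\omega) \to G(\omega)$ a.e. as $r \to 1^-$, and $h(r\omega) \to H(\omega)$ a.e.; taking the limit gives $H(\omega) = F(\omega)G(\omega)$ a.e., so $FG = H \in H_1(\mathbb{T}^N)$ and the two are associated. This already yields ``in this case $fg \sim FG$'' for this direction.

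For the converse \ref{nana3} $\Rightarrow$ \ref{nana2}, assume $FG \in H_1(\mathbb{T}^N)$ and let $h \in H_1(\mathbb{D}^N)$ be the holomorphic function associated to $FG$. I want to show $h = fg$ on $\mathbb{D}^N$. The natural route is to compare Cauchy/Taylor coefficients: one checks that the Cauchy coefficients of the holomorphic function associated to a product on the torus are given by the (finite) Cauchy–product convolution of the coefficients of the factors. Concretely, since $F, G \in H_1(\mathbb{T}^N) \subseteq H_1(\mathbb{T}^N)$, their Fourier coefficients are supported on $\mathbb{N}_0^N$, and the Fourier coefficients of $FG$ are $\widehat{FG}(\alpha) = \sum_{\beta + \gamma = \alpha} \hat F(\beta)\hat G(\gamma)$, a finite sum for each $\alpha$ because the index set is $\mathbb{N}_0^N$. (This convolution identity requires a small justification: a priori $FG$ need only be in $L_1$, but the hypothesis says $FG \in H_1$, and the Fourier coefficients of an $L_1$ product are computed against trigonometric monomials, which can be justified by approximating $F$ by its Cesàro/Fejér means on $\mathbb{T}^N$ — these converge to $F$ in $L_1$ and are trigonometric polynomials.) On the polydisk side, $f$ and $g$ are holomorphic on all of $\mathbb{D}^N$ with Taylor coefficients $c_\alpha(f) = \hat F(\alpha)$, $c_\alpha(g) = \hat G(\alpha)$, so their product $fg$ is holomorphic on $\mathbb{D}^N$ with Taylor coefficients exactly the same convolution $\sum_{\beta+\gamma=\alpha} c_\beta(f) c_\gamma(g)$. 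Hence $c_\alpha(fg) = \widehat{FG}(\alpha) = c_\alpha(h)$ for all $\alpha \in \mathbb{N}_0^N$, and since a holomorphic function on $\mathbb{D}^N$ is determined by its Taylor coefficients, $fg = h \in H_1(\mathbb{D}^N)$, and $fg \sim FG$.

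The main obstacle I anticipate is the justification that $\widehat{FG}(\alpha)$ equals the convolution of $\hat F$ and $\hat G$: the product of two $L_1$ functions need not be integrable, so one cannot argue naively, and one must genuinely use the membership $FG \in H_1(\mathbb{T}^N)$ (or equivalently work with Fejér means, where $\sigma_n(F)\,\sigma_m(G)$ is a trigonometric polynomial whose coefficients are the truncated convolution and then pass to a limit, using that $\sigma_n(F) \to F$, $\sigma_m(G) \to G$ in measure and dominated/uniform-integrability arguments). A secondary subtlety is making sure the interchange of limits in the \ref{nana2} $\Rightarrow$ \ref{nana3} direction is valid almost everywhere simultaneously for $f$, $g$, and $h$, which is fine since a finite intersection of full-measure sets is still of full measure. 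The rest is bookkeeping with the dictionary $c_\alpha(\cdot) \leftrightarrow \hat{(\cdot)}(\alpha)$ already set up in the preliminaries.
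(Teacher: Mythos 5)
Your first implication (\ref{nana2} $\Rightarrow$ \ref{nana3}) is exactly the paper's argument: take radial limits of $f$, $g$ and of the product $fg$ on a common set of full measure and identify the boundary function of $fg$ with $FG$. That direction is correct.

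The converse is where the proposal has a genuine gap, and it sits exactly at the point you yourself flag as the main obstacle: the identity $\widehat{FG}(\alpha)=\sum_{\beta+\gamma=\alpha}\hat F(\beta)\hat G(\gamma)$. The Fej\'er-mean justification you sketch does not close it. The coefficients of $\sigma_n(F)\,\sigma_n(G)$ do converge to the convolution, but to pass from there to $\widehat{FG}(\alpha)$ you need $\sigma_n(F)\,\sigma_n(G)\to FG$ at least weakly against bounded functions, and that is precisely what is unavailable: $\sigma_n(F)\to F$ and $\sigma_n(G)\to G$ in $L_1$ (and a.e.\ along subsequences), but the products $\sigma_n(F)\,\sigma_n(G)$ need not be uniformly integrable and admit no integrable dominating function, since $|F|\,|G|$ itself need not be in $L_1$; so neither Vitali nor dominated convergence applies. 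Moreover, the standard proofs of this convolution identity for $F,G\in H_1$ with $FG\in L_1$ proceed by \emph{first} establishing that $fg\in H_1(\mathbb{D}^N)$ (a Smirnov-type boundary-value theorem), which is the very conclusion you are trying to reach, so the logical order of your plan is reversed. The paper circumvents all of this: it sets $u=fg-h$ with $h\sim FG$, observes that $u$ lies in the Nevanlinna class $\mathcal{N}(\mathbb{D}^N)$, computes (as in your first direction) that the radial limit $u^*$ vanishes a.e.\ on $\mathbb{T}^N$, and then applies the dichotomy of \cite[Theorem~3.3.5]{rudin1969function}: either $\log|u^*|\in L_1(\mathbb{T}^N)$ or $u\equiv 0$. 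Since $u^*=0$ a.e., the first alternative is impossible, whence $fg=h\in H_1(\mathbb{D}^N)$. To repair your argument you would need to replace the Fej\'er step by this (or an equivalent) boundary-uniqueness result for the Nevanlinna class of the polydisc.
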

\begin{proof}
Let us suppose first that $fg  \in H_{1} (\mathbb{D}^{N})$ and denote by $H \in H_{1} (\mathbb{T}^{N})$ the associated function. Then, since 
\[
F(\omega) = \lim_{r \to 1^{-}} f(r \omega) , \text{ and } G(\omega) = \lim_{r \to 1^{-}} g(r \omega) \,
\]
for almost all $\omega \in \mathbb{T}^{N}$, we have
\[
H (\omega) = \lim_{r \to 1^{-}} (fg)(r\omega) = F(\omega) G(\omega)
\]
for almost all $\omega \in \mathbb{T}^{N}$. Therefore $F G = H \in H_{1}(\mathbb{T}^{N})$, and this yields~\ref{nana3}. \\
Let us conversely assume that $FG \in H_{1}(\mathbb{T}^{N})$, and take the associated function $h \in H_{1} (\mathbb{D}^{N})$. The product $fg : \mathbb{D}^{N} \to \mathbb{C}$ is a holomorphic function and $fg -h$ belongs to the 
Nevanlinna class $\mathcal{N}(\mathbb{D}^{N})$, that is 
\[
\sup_{0<r<1} \int\limits_{\mathbb{T}^{N}} \log^{+} \vert f (r\omega) g(r\omega) - h(r\omega) \vert \mathrm{d} \omega < \infty \,
\]
where $\log^{+}(x):= \max \{0, \log x\}$ (see \cite[Section~3.3]{rudin1969function} for a complete account on this space). Consider $H(\omega)$ defined for almost all $\omega \in \mathbb{T}^{N}$ as the radial limit of $fg-h$. Then by \cite[Theorem 3.3.5]{rudin1969function} there are two possibilities: either $\log \vert H \vert \in L_{1}(\mathbb{T}^{N})$ or $fg-h =0$ on $\mathbb{D}^{N}$. But, just as before, we have
\[
\lim_{r \to 1^{-}} f(r\omega) g(r\omega) = F(\omega) G(\omega) = \lim_{r \to 1^{-}} h(r\omega)
\]
for almost all $\omega \in \mathbb{T}^{N}$, and then necessarily $H=0$. Thus $fg=h$ on $\mathbb{D}^{N}$, and $fg \in H_{1}(\mathbb{D}^{N})$. This shows that~\ref{nana3} implies~\ref{nana2} and completes the proof.
\end{proof}

For the general case we need the notion of the Nevanlinna class in the infinite dimensional framework. Given $\mathbb{D}_1^\infty := \ell_1 \cap \mathbb{D}^\infty$, a function $u: \mathbb{D}_1^\infty \to \mathbb{C}$ and $0< r < 1$, the mapping $u_{[r]}: \mathbb{T}^\infty \to \mathbb{C}$ is defined by 
\[
u_{[r]} (\omega) = (r\omega_1, r^2 \omega_2, r^3 \omega_3, \cdots).
\]
The Nevanlinna class on infinitely many variables,  introduced recently in \cite{guo2022dirichlet} and denoted by $\mathcal{N}(\mathbb{D}_1^\infty)$, consists on those holomorphic functions $u: \mathbb{D}_1^\infty \to \mathbb{C}$ such that 
\[
\sup\limits_{0<r<1} \int\limits_{\mathbb{T}^\infty} \log^+ \vert u_{[r]}(\omega)  \vert \mathrm{d} \omega < \infty.
\]
We can now prove the general case.

\begin{proof}[Proof of Theorem~\ref{jonas}]
Let us show first that~\ref{jonas1} implies~\ref{jonas2}. Suppose that $D=\sum a_{n} n^{-s}, E= \sum b_{n} n^{-s} \in \mathcal{H}_{1}$ are so that $\big(\sum a_{n} n^{-s} \big) \big( \sum b_{n} n^{-s}  \big) = 
\sum c_{n} n^{-s} \in \mathcal{H}_{1}$. Let $h \in H_{1}(\mathbb{D}_{2}^{\infty})$ be the holomorphic function associated to the product. Recall that, if $\alpha \in \mathbb{N}_{0}^{(\mathbb{N})}$ and $n = \mathfrak{p}^{\alpha} \in \mathbb{N}$, then
\begin{equation} \label{producto1}
c_{\alpha}(f) = a_{n} , \, c_{\alpha}(g) = b_{n} \text{ and } c_{\alpha} (h) = c_{n} = \sum_{jk=n} a_{j} b_{k} \,.
\end{equation}
On the other hand, the function $f \cdot g : \mathbb{D}_{2}^{\infty} \to \mathbb{C}$ is holomorphic and a straightforward computation shows that
\begin{equation} \label{producto2}
c_{\alpha} (fg) = \sum_{\beta + \gamma = \alpha} c_{\beta}(f) c_{\gamma}(g) \,.
\end{equation}
for every $\alpha$.
Now, if $jk=n = \mathfrak{p}^{\alpha}$ for some $\alpha  \in \mathbb{N}_{0}^{(\mathbb{N})}$, then there are $\beta, \gamma \in  \mathbb{N}_{0}^{(\mathbb{N})}$ so that $j = \mathfrak{p}^{\beta}$, $k = \mathfrak{p}^{\gamma}$ and $\beta + \gamma = \alpha$. 
This, together with \eqref{producto1} and \eqref{producto2} shows that $c_{\alpha}(h) = c_{\alpha} (fg)$ for every $\alpha$ and, therefore $fg=h \in H_{1} (\mathbb{D}_{2}^{\infty})$. This yields our claim.\\
Suppose now that  $fg  \in H_{1} (\mathbb{D}_{2}^{\infty})$ and take the corresponding Dirichlet series $\sum a_{n} n^{-s}$, $\sum b_{n} n^{-s}$, $\sum c_{n} n^{-s}  \in \mathcal{H}_{1}$ (associated to $f$, $g$  and $fg$ respectively). The same argument as above shows that
\[
c_{n} = c_{\alpha}(fg)= \sum_{\beta + \gamma = \alpha} c_{\beta}(f) c_{\gamma}(g) =  \sum_{jk=n} a_{j} b_{k} \, ,
\]
hence $\big(\sum a_{n} n^{-s} \big) \big( \sum b_{n} n^{-s}  \big) =  \sum c_{n} n^{-s} \in \mathcal{H}_{1}$, showing that~\ref{jonas2} implies~\ref{jonas1}.\\
Suppose now that $fg \in H_{1}(\mathbb{D}_{2}^{\infty})$ and let us see that~\ref{jonas3} holds. Let $H \in H_{1}(\mathbb{T}^{\infty})$ be the function associated to $fg$. Note first that $f_{N} \sim F_{N}$, $g_{N} \sim G_{N}$ and $(fg)_{N} \sim H_{N}$ for every $N$. A straightforward computation shows that $(fg)_{N} = f_{N} g_{N}$, and then this product is in $H_{1}(\mathbb{D}^{N})$. Then Proposition~\ref{nana} yields $f_{N} g_{N} \sim F_{N} G_{N}$, therefore
\[
\hat{H}_{N} (\alpha) = \widehat{(F_{N}G_{N})} (\alpha)
\]
for every $\alpha \in \mathbb{N}_{0}^{(\mathbb{N})}$ and, then, $H_{N} = F_{N}G_{N}$ for every $N \in \mathbb{N}$. We can find a subsequence in such a way that 
\[
\lim_{k} F_{N_{k}} (\omega) = F(\omega),  \, \lim_{k} G_{N_{k}} (\omega) = G(\omega),  \, \text{ and } \lim_{k} H_{N_{k}} (\omega) = H(\omega)
\]
for almost all $\omega \in \mathbb{T}^{\infty}$ (recall Remark~\ref{manon}). All this gives that $F(\omega)G(\omega) = H(\omega)$ for almost all $\omega \in \mathbb{T}^{\infty}$. Hence $FG = H \in H_{1} (\mathbb{T}^{\infty})$, and our claim is proved. \\
Finally, if $FG \in H_{1}(\mathbb{T}^{\infty})$, we denote by $h$ its associated function in $H_{1}(\mathbb{D}_{2}^{\infty})$. 
By \cite[Propostions~2.8 and 2.14]{guo2022dirichlet} we know that $H_1(\mathbb{D}_2^\infty)$ is contained in the Nevanlinna class $\mathcal{N}(\mathbb{D}_1^\infty)$,  therefore $f,g,h \in \mathcal{N}(\mathbb{D}_1^\infty)$ and hence, by definition, $f\cdot g - h \in \mathcal{N}(\mathbb{D}_1^\infty)$. On the other hand, \cite[Theorem~2.4 and Corollary~2.11]{guo2022dirichlet} tell us that, if $u \in \mathcal{N}(\mathbb{D}_1^\infty)$, then the radial limit $u^*(\omega) = \lim\limits_{r\to 1^-} u_{[r]} (\omega)$ exists for almost all $\omega\in \mathbb{T}^\infty$. Even more, $u=0$ if and only if $u^*$ vanishes on some subset of $\mathbb{T}^\infty$ with positive measure. The radial limit of $f,g$ and $h$ coincide a.e. with $F, G$ and $F\cdot G$ respectively (see \cite[Theorem~1]{aleman2019fatou}). Since 
\[
(f\cdot g - h)^* (\omega)= \lim\limits_{r\to 1^-} f_{[r]}(\omega) \cdot g_{[r]}(\omega) -h_{[r]}(\omega) = 0,
\]
for almost all $\omega\in \mathbb{T}^\infty$, then $f\cdot g =h$ on $\mathbb{D}_1^\infty$. Finally, since the set $\mathbb{D}_1^\infty$ is dense in $\mathbb{D}_2^\infty$, by the continuity of the functions we have that $f\cdot g \in H_1(\mathbb{D}_2^\infty).$
\end{proof}

As an immediate consequence of Theorem~\ref{jonas} we obtain the following.

\begin{proposition} \label{charite}
For every $1 \leq p, q \leq \infty$ we have
\[
\mathfrak{M}(p,q) = \mathcal{M}_{\mathbb{D}_{2}^{\infty}}(p,q) = \mathcal{M}_{\mathbb{T}^{\infty}}(p,q) \,,
\]
and
\[
\mathfrak{M}^{(N)}(p,q) = \mathcal{M}_{\mathbb{D}^{N}}(p,q) = \mathcal{M}_{\mathbb{T}^{N}}(p,q) \,,
\]
for every $N \in \mathbb{N}$, by means of the Bohr transform. 
\end{proposition}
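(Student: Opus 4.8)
The plan is to transport everything through the Bohr transform and let Theorem~\ref{jonas} (resp.\ Proposition~\ref{nana}) do the work of showing that the transport respects products.

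First I would record the setup. For every $1\le r\le\infty$ the Bohr transforms $\mathcal B_{\mathbb D_2^\infty}\colon H_r(\mathbb D_2^\infty)\to\mathcal H_r$ and $\mathcal B_{\mathbb T^\infty}\colon H_r(\mathbb T^\infty)\to\mathcal H_r$ are isometric isomorphisms, and by uniqueness of the coefficients the function associated with a given Dirichlet series depends only on its coefficient sequence, not on the index $r$. Since every element of $\mathfrak M(p,q)$ lies in $\mathcal H_q\subseteq\mathcal H_1$ and every element of $\mathcal M_{\mathbb D_2^\infty}(p,q)$ lies in $H_q(\mathbb D_2^\infty)\subseteq H_1(\mathbb D_2^\infty)$, in all cases of interest $D\in\mathcal H_1$ and the associated $f\sim F\sim D$ satisfy $f\in H_1(\mathbb D_2^\infty)$ and $F\in H_1(\mathbb T^\infty)$; moreover, as $E$ runs over $\mathcal H_p$ the associated $g$ and $G$ run over all of $H_p(\mathbb D_2^\infty)$ and $H_p(\mathbb T^\infty)$ respectively.

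Next, for such a $D$ and arbitrary $E\in\mathcal H_p\subseteq\mathcal H_1$ I would apply Theorem~\ref{jonas} to the pair $D,E$: it yields the equivalence of $DE\in\mathcal H_1$, $fg\in H_1(\mathbb D_2^\infty)$ and $FG\in H_1(\mathbb T^\infty)$, together with $DE\sim fg\sim FG$ whenever these hold. The only genuine point is the upgrade of the index from $1$ to $q$: if $DE\in\mathcal H_q$, then a fortiori $DE\in\mathcal H_1$, so by Theorem~\ref{jonas} $fg\in H_1(\mathbb D_2^\infty)$ with $DE\sim fg$; but $DE\in\mathcal H_q$ also has a unique associated function in $H_q(\mathbb D_2^\infty)$, which by uniqueness of the coefficients must be $fg$, so in fact $fg\in H_q(\mathbb D_2^\infty)$. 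Conversely, $fg\in H_q(\mathbb D_2^\infty)$ forces $fg\in H_1(\mathbb D_2^\infty)$, hence $DE\in\mathcal H_1$ with $DE\sim fg$, and the same uniqueness/surjectivity argument puts $DE$ in $\mathcal H_q$. The identical reasoning works on $\mathbb T^\infty$. Quantifying over $E$ (equivalently over $g$, over $G$) gives $D\in\mathfrak M(p,q)\iff f\in\mathcal M_{\mathbb D_2^\infty}(p,q)\iff F\in\mathcal M_{\mathbb T^\infty}(p,q)$, i.e.\ the claimed equality of the three multiplier spaces under the Bohr transform. For the equality of norms I would use multiplicativity of the Bohr transform (already contained in the coefficient computation \eqref{producto1}--\eqref{producto2}), which gives the operator identities $M_D=\mathcal B_{\mathbb D_2^\infty}\circ M_f\circ\mathcal L_{\mathbb D_2^\infty}$ and $M_D=\mathcal B_{\mathbb T^\infty}\circ M_F\circ\mathcal L_{\mathbb T^\infty}$; since all the maps involved are isometric isomorphisms, the three multiplication operators have the same norm.

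Finally, the finite-dimensional identities are obtained by running the very same argument with Proposition~\ref{nana} in place of Theorem~\ref{jonas}, and with the isometric isomorphisms $H_p(\mathbb D^N)\cong H_p(\mathbb T^N)\cong\mathcal H_p^{(N)}$ (and their index-$1$ analogues) replacing the infinite-variable ones. I do not anticipate a real obstacle; the one place needing a touch of care — and the reason Theorem~\ref{jonas} cannot simply be quoted as a black box — is the passage from index $1$ to index $q$, which rests on the uniqueness of the coefficient data together with the surjectivity of the Bohr transform onto $\mathcal H_q$ (resp.\ $\mathcal H_q^{(N)}$).
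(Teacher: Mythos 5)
Your proposal is correct and follows exactly the route the paper intends: the paper states Proposition~\ref{charite} as an immediate consequence of Theorem~\ref{jonas} and gives no further argument, and your write-up simply supplies the routine details (the index upgrade from $1$ to $q$ via uniqueness of coefficients and surjectivity of the Bohr transform, and the isometry of the transported multiplication operators). No gaps.
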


Again (as in Proposition~\ref{hilbert}), being a multiplier can be characterized in terms of the restrictions (this follows immediately from Proposition~\ref{hilbert} and Proposition~\ref{charite}).

\begin{proposition}\label{remark multiplicadores}
\,
\begin{enumerate}

\item $f \in \mathcal{M}_{\mathbb{D}^{\infty}_2}(p,q)$ if and only if $f_N \in \mathcal{M}_{\mathbb{D}^N_2}(p,q)$ for every $N \in \mathbb{N}$ and $\sup_{N} \Vert M_{f_{N}} \Vert < \infty$.

\item $F \in \mathcal{M}_{\mathbb{T}^{\infty}}(p,q)$, then, $F_N \in \mathcal{M}_{\mathbb{T}^N}(p,q)$ for every $N \in \mathbb{N}$  and $\sup_{N} \Vert M_{F_{N}} \Vert < \infty$.
\end{enumerate}
\end{proposition}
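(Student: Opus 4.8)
The plan is to deduce both statements from Proposition~\ref{hilbert} and Proposition~\ref{charite} by transporting everything, through the Bohr transform, to the level of Dirichlet series, where the restriction $D \rightsquigarrow D_{N}$ has already been analyzed.

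First I would record the one routine bookkeeping fact needed: the restriction maps on the function side correspond, under the Bohr transform, to $D \rightsquigarrow D_{N}$ on the Dirichlet side. Indeed, if $f \in H_{p}(\mathbb{D}_{2}^{\infty})$ with $f \sim D = \sum a_{n} n^{-s}$, then $f_{N}(z_{1}, \ldots, z_{N}) = f(z_{1}, \ldots, z_{N}, 0, 0, \ldots)$ has Cauchy coefficients $c_{\alpha}(f_{N}) = c_{\alpha}(f)$ when $\alpha$ is supported on the first $N$ coordinates and $c_{\alpha}(f_{N}) = 0$ otherwise; since $a_{n} = c_{\alpha}(f)$ for $n = \mathfrak{p}^{\alpha}$, and $\gpd(n) \leq \mathfrak{p}_{N}$ holds exactly when $\alpha$ is supported on $\{1, \ldots, N\}$, this says precisely $f_{N} \sim D_{N}$. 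The same comparison of coefficients, now using the identity $\widehat{F_{N}}(\alpha) = \hat{F}(\alpha_{1}, \ldots, \alpha_{N}, 0, 0, \ldots)$ recalled in the preliminaries, gives $F_{N} \sim D_{N}$ whenever $F \in H_{p}(\mathbb{T}^{\infty})$ with $F \sim D$. Since the identifications of Proposition~\ref{charite} (both for finite $N$ and for $N = \infty$) are isometric on the multiplier norms, this yields $\Vert M_{f_{N}} \Vert = \Vert D_{N} \Vert_{\mathfrak{M}^{(N)}(p,q)} = \Vert M_{F_{N}} \Vert$ and $\Vert M_{f} \Vert = \Vert D \Vert_{\mathfrak{M}(p,q)} = \Vert M_{F} \Vert$.

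With this dictionary, part (a) is a chain of equivalences: $f \in \mathcal{M}_{\mathbb{D}_{2}^{\infty}}(p,q)$ iff $D \in \mathfrak{M}(p,q)$ (Proposition~\ref{charite}); iff $D_{N} \in \mathfrak{M}^{(N)}(p,q)$ for every $N$ with $\sup_{N} \Vert D_{N} \Vert_{\mathfrak{M}^{(N)}(p,q)} < \infty$ (Proposition~\ref{hilbert}); iff $f_{N} \in \mathcal{M}_{\mathbb{D}^{N}_{2}}(p,q)$ for every $N$ with $\sup_{N} \Vert M_{f_{N}} \Vert < \infty$ (Proposition~\ref{charite} at level $N$, together with $f_{N} \sim D_{N}$ and the norm identities above). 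For part (b) the same reasoning with $\mathbb{T}$ in place of $\mathbb{D}_{2}$ gives the claimed implication: if $F \in \mathcal{M}_{\mathbb{T}^{\infty}}(p,q)$ then $D \in \mathfrak{M}(p,q)$, so by the forward direction of Proposition~\ref{hilbert} each $D_{N}$ lies in $\mathfrak{M}^{(N)}(p,q)$ with $\sup_{N} \Vert D_{N} \Vert_{\mathfrak{M}^{(N)}(p,q)} \leq \Vert D \Vert_{\mathfrak{M}(p,q)} < \infty$, and translating back via Proposition~\ref{charite} and $F_{N} \sim D_{N}$ yields $F_{N} \in \mathcal{M}_{\mathbb{T}^{N}}(p,q)$ for every $N$ with $\sup_{N} \Vert M_{F_{N}} \Vert < \infty$. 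There is no hard step: the only point demanding care is the first paragraph, i.e.\ checking that ``restriction to the first $N$ variables'' on $\mathbb{D}_{2}^{\infty}$ and ``integrating out the tail variables'' on $\mathbb{T}^{\infty}$ both correspond to $D \rightsquigarrow D_{N}$ under the Bohr transform, and that the isometric nature of the identifications makes the two supremum conditions match; once that is in place, (a) and (b) are direct transcriptions of Proposition~\ref{hilbert}.
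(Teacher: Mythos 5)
Your proposal is correct and follows exactly the route the paper intends: the paper derives this proposition directly from Proposition~\ref{hilbert} and Proposition~\ref{charite}, and your chain of equivalences, together with the verification that $f_N \sim D_N$ and $F_N \sim D_N$ and that the Bohr identifications preserve the multiplier norms, supplies precisely the bookkeeping the paper leaves implicit.
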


The following statement describes the spaces of multipliers, viewing them as Hardy spaces of Dirichlet series.
A result of similar flavour for holomorphic functions in one variable appears in \cite{stessin2003generalized}. 

\begin{theorem}\label{descripcion}
The following assertions hold true
\begin{enumerate}
    \item \label{descr1} $\mathfrak{M}(\infty,q)= \mathcal{H}_q$ isometrically.
    \item \label{descr2}  If $1\leq q<p<\infty$ then
    $\mathfrak{M}(p,q) = \mathcal{H}_{pq/(p-q)} $ \; isometrically.
    \item \label{descr3} If $1 \leq p \leq \infty$ then $\mathfrak{M}(p,p)= \mathcal{H}_{\infty}$ isometrically.
    \item \label{descr4} If $1 \le p<q \leq \infty$ then $\mathfrak{M}(p,q)=\{0\}$.
\end{enumerate}
The same equalities hold if we replace in each case $\mathfrak{M}$ and $\mathcal{H}$ by $\mathfrak{M}^{(N)}$ and $\mathcal{H}^{(N)}$ (with $N \in \mathbb{N}$) respectively.
\end{theorem}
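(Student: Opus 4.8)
The plan is to reduce everything to the finite-dimensional polytorus setting via Proposition~\ref{charite} and Proposition~\ref{remark multiplicadores}, prove the statement there using the Cole--Gamelin inequality and its optimality \eqref{optima}, and then pass to the limit in $N$. By Proposition~\ref{charite} it suffices to identify $\mathcal{M}_{\mathbb{T}^{\infty}}(p,q)$ (equivalently $\mathcal{M}_{\mathbb{D}_2^\infty}(p,q)$) with the claimed Hardy space, and by Proposition~\ref{remark multiplicadores} a function is a multiplier with norm $C$ iff all its restrictions $f_N$ are multipliers with $\sup_N \Vert M_{f_N}\Vert = C$; so the main work is the finite-$N$ statement, and the infinite-dimensional one follows by taking suprema together with Remark~\ref{manon} (to see that the limit function has the right norm and lies in the right space).

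For the finite-dimensional core I would argue as follows. Part~\ref{descr1}: if $p=\infty$ then $H_\infty(\mathbb{D}^N_2)$ is a Banach algebra, so $\Vert M_f\Vert_{H_\infty\to H_q}\le \Vert f\Vert_{H_q}$ trivially (taking $g=\mathbf 1$ gives the reverse, via \eqref{aleluya}), hence $\mathfrak M(\infty,q)=\mathcal H_q$ isometrically. Part~\ref{descr3} is exactly the classical Bayart result quoted in the introduction ($\Vert M_D\Vert_{\mathcal H_p\to\mathcal H_p}=\Vert D\Vert_{\mathcal H_\infty}$), which I would restate; the $\ge$ direction of $\mathfrak M(p,p)\subseteq\mathcal H_\infty$ uses the Cole--Gamelin bound to control $|f(z)|$ pointwise by testing against the extremal functions $f_z$ from \eqref{optima}, and the $\le$ direction is Hölder on the polytorus. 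Part~\ref{descr2} is the heart: given $f\in H_r(\mathbb{D}^N_2)$ with $1/r = 1/q - 1/p$, Hölder's inequality on $\mathbb{T}^N$ with exponents $r/q$ and $p/q$ gives $\Vert fg\Vert_{H_q}\le \Vert f\Vert_{H_r}\Vert g\Vert_{H_p}$, so $\mathcal H_{pq/(p-q)}\subseteq\mathfrak M(p,q)$ with $\Vert M_f\Vert\le\Vert f\Vert_{H_r}$. For the converse and the isometry, fix $z\in\mathbb{D}^N$ and feed the multiplication operator the extremal function $g_z$ from \eqref{optima} normalized in $H_p$; then $M_f(g_z)=f\cdot g_z\in H_q$ and evaluating at $z$ via Cole--Gamelin \eqref{eq: Cole-Gamelin} in $H_q$ yields a lower bound on $|f(z)|\,\prod_j(1-|z_j|^2)^{-1/r}$ by $\Vert M_f\Vert$; this says precisely that $f\prod_j(1-|z_j|^2)^{-1/r}$-type quantities are bounded, which by the optimality of Cole--Gamelin and a known converse (the characterization of $H_r$ via such weighted sup-bounds, as in \cite[Theorem~13.15]{defant2018Dirichlet}) forces $f\in H_r(\mathbb{D}^N_2)$ with $\Vert f\Vert_{H_r}\le\Vert M_f\Vert$. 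Part~\ref{descr4}: if $p<q$ then $r=pq/(p-q)$ would be negative; concretely, take $g_z$ extremal in $H_p$ as above and let some coordinate $z_j\to 1$: Cole--Gamelin forces $|f(z)|$ to grow slower than $\prod(1-|z_j|^2)^{-1/q}$ while $g_z$ already saturates the $H_p$-rate $\prod(1-|z_j|^2)^{-1/p}$ which is faster, so $fg_z$ cannot stay in $H_q$ unless $f\equiv 0$; equivalently one shows $f\in H_r$ for every $r<\infty$ forces, after also noting $f\in\bigcap_r H_r$, that the only function surviving is $0$. (Alternatively: $\mathfrak M(p,q)\subseteq\mathfrak M(q,q)=\mathcal H_\infty$ and, by testing on extremals again, $\mathfrak M(p,q)\subseteq\mathcal H_r$ for all finite $r$, and one pushes $r\downarrow$ to get $f=0$.)

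Finally I would assemble the infinite-dimensional statements: for the upper bounds (Hölder) nothing changes, $\Vert M_f\Vert_{H_p(\mathbb{D}_2^\infty)\to H_q(\mathbb{D}_2^\infty)}\le\Vert f\Vert_{H_{pq/(p-q)}(\mathbb{D}_2^\infty)}$ directly from Hölder on $\mathbb{T}^\infty$; for the lower bounds and membership, use Proposition~\ref{remark multiplicadores} to get $f_N\in\mathcal M_{\mathbb{D}^N_2}(p,q)$ with $\Vert M_{f_N}\Vert\le\Vert M_f\Vert$, apply the finite case to obtain $f_N\in H_{pq/(p-q)}(\mathbb{D}^N_2)$ with $\Vert f_N\Vert_{H_{pq/(p-q)}}\le\Vert M_f\Vert$, and then let $N\to\infty$ using $\Vert f_N\Vert \to \Vert f\Vert$ (Remark~\ref{manon}, monotone convergence of the truncations) to conclude $f\in H_{pq/(p-q)}(\mathbb{D}_2^\infty)$ isometrically; the case $p<q$ gives $f_N=0$ for all $N$, hence $f=0$. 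The case $p=\infty$ uses that $H_\infty(\mathbb{D}_2^\infty)$ is a Banach algebra and $\mathbf 1\in H_q$.

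I expect the main obstacle to be the precise converse direction in part~\ref{descr2}: turning the family of pointwise lower bounds $|f(z)|\le \Vert M_f\Vert\,\prod_j(1-|z_j|^2)^{-1/r}$ obtained from testing on extremals into the genuine norm estimate $\Vert f\Vert_{H_r}\le\Vert M_f\Vert$ (i.e. the sharp converse to Cole--Gamelin). One must be careful that the extremal functions $g_z$ of \eqref{optima} are normalized in the correct space $H_p$, that the product $f\cdot g_z$ is legitimately evaluated via the $H_q$ Cole--Gamelin inequality, and that the resulting weighted sup is actually the $H_r$ norm and not merely an upper bound for it — this is where one invokes the optimality clause of Cole--Gamelin together with the identity $1/r=1/q-1/p$ so that the weight exponents match exactly. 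A secondary technical point is justifying the passage $N\to\infty$ in the norm: one needs that $\Vert f_N\Vert_{H_r(\mathbb{D}^N_2)}$ increases to $\Vert f\Vert_{H_r(\mathbb{D}^\infty_2)}$, which follows from the monotonicity $\Vert F_{N_1}\Vert\le\Vert F_{N_2}\Vert\le\Vert F\Vert$ recorded in the preliminaries together with $F_N\to F$ from Remark~\ref{manon}.
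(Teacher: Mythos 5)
Your overall architecture (reduce to the polytorus/polydisc via Proposition~\ref{charite}, settle the finite-dimensional case, pass to the limit in $N$) matches the paper, and your treatments of parts \ref{descr1}, \ref{descr3} and \ref{descr4} are essentially the paper's own arguments. The problem is the converse inclusion and the isometry in part~\ref{descr2} --- precisely the step you single out as the main obstacle: the argument you propose for it does not work.

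Testing $M_f$ on the normalized extremal functions $g_z$ of \eqref{optima} and applying Cole--Gamelin \eqref{eq: Cole-Gamelin} in $H_q$ gives exactly the pointwise bound
\[
\vert f(z)\vert \;\leq\; \Vert M_f\Vert \prod_{j=1}^N \bigl(1-\vert z_j\vert^2\bigr)^{1/p-1/q} \;=\; \Vert M_f\Vert \prod_{j=1}^N \bigl(1-\vert z_j\vert^2\bigr)^{-1/r},
\qquad \tfrac{1}{r}=\tfrac{1}{q}-\tfrac{1}{p}.
\]
But this growth condition does \emph{not} characterize $H_r$: it is necessary (that is what Cole--Gamelin says) but far from sufficient. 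Already for $N=1$ the function $f(z)=(1-z)^{-1/r}$ satisfies $\vert f(z)\vert\lesssim (1-\vert z\vert^2)^{-1/r}$ and yet $\sup_{\rho<1}\int_{\mathbb{T}}\vert f(\rho\omega)\vert^r\,\mathrm{d}\omega=\infty$, so $f\notin H_r(\mathbb{D})$ (it lies in $H_s$ only for $s<r$). The ``optimality clause'' of \cite[Theorem~13.15]{defant2018Dirichlet} only asserts that for each fixed $z$ there is a norm-one function attaining the bound at that single point; it is not, and cannot be upgraded to, a converse characterization of membership in $H_r$ by weighted sup-bounds. So your route only places $f$ in a strictly larger growth class, and neither $\mathfrak{M}(p,q)\subseteq\mathcal H_{pq/(p-q)}$ nor $\Vert f\Vert_{H_r}\le\Vert M_f\Vert$ follows from it.

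The paper closes this gap by an entirely different mechanism. Given $F\in\mathcal M_{\mathbb{T}^N}(p,q)$, one first extends $M_F$ to a bounded operator $L_p(\mathbb{T}^N)\to L_q(\mathbb{T}^N)$: every trigonometric polynomial $Q$ factors as $\bigl(\prod_j z_j^{-k}\bigr)P$ with $P$ analytic, and since $\vert\omega_j\vert=1$ on the torus the relevant $L_p$- and $L_q$-norms of $Q$ and $P$ (and of $FQ$ and $FP$) coincide, so the multiplier bound for analytic polynomials transfers to all trigonometric polynomials and, by density, to all of $L_p$. Consequently $\vert F\vert^q\cdot H\in L_1$ for every $H\in L_{p/q}$, whence $\vert F\vert^q\in L_{p/q}(\mathbb{T}^N)^*=L_{p/(p-q)}(\mathbb{T}^N)$, i.e.\ $F\in L_{pq/(p-q)}(\mathbb{T}^N)$; the vanishing of the negative Fourier coefficients then puts $F$ in $H_{pq/(p-q)}(\mathbb{T}^N)$. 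The isometry is obtained by testing the extended operator on $G=\vert F\vert^{q/(p-q)}\in L_p$. If you wish to keep your structure, you must replace your converse step by a duality argument of this kind; the pointwise extremal-function bound alone cannot deliver it.
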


\begin{proof} 
To get the result we use again the isometric identifications between the Hardy spaces of Dirichlet series and both Hardy spaces of functions, and also between their multipliers given in Proposition~\ref{charite}.
Depending on each case we will use the most convenient identification, jumping from one to the other without further notification.

\ref{descr1} We already noted that  $\mathcal{M}_{\mathbb{T}^{N}}(\infty,q)\subset H_{q}(\mathbb{T}^N)$ with continuous inclusion (recall \eqref{aleluya}). On the other hand, if $D \in \mathcal{H}_{q}$ and $E \in \mathcal{H}_{\infty}$ then $D\cdot E$ a Dirichlet series in $\mathcal{H}_{q}$. Moreover,
\[
\Vert M_D(E) \Vert_{\mathcal{H}_{q}} 
\leq \Vert D \Vert_{\mathcal{H}_{q}} \Vert E \Vert_{\mathcal{H}_{\infty}}.
\]
This shows that $\Vert M_D \Vert_{\mathfrak{M}(\infty,q)} \leq \Vert D \Vert_{\mathcal{H}_{q}},$ providing the isometric identification.

\ref{descr2} Suppose $1 \leq q<p<\infty$ and take some $f \in H_{pq/(p-q)} (\mathbb{D}^\infty_2)$ and $g\in H_{p}(\mathbb{D}^\infty_2)$, then $f\cdot g$ is holomorphic on $\mathbb{D}^\infty_2$. Consider $t= \frac{p}{p-q}$ and note that $t$ is the conjugate exponent of $\frac{p}{q}$ in the sense that $\frac{q}{p} + \frac{1}{t} = 1$. Therefore given $M\in \mathbb{N}$ and $0< r <1$, by H\"older inequality
\begin{align*}
\left( \int\limits_{\mathbb{T}^M} \vert f\cdot g(r\omega,0) \vert^q \mathrm{d}\omega \right)^{1/q} & \leq \left( \int\limits_{\mathbb{T}^M} \vert f(r\omega, 0) \vert^{qt} \mathrm{d}\omega \right)^{1/qt}\left( \int\limits_{\mathbb{T}^M} \vert g(r\omega, 0) \vert^{qp/q} \mathrm{d}\omega \right)^{q/qp} \\
&= \left( \int\limits_{\mathbb{T}^M} \vert f(r\omega, 0) \vert^{qp/(p-q)} \mathrm{d}\omega \right)^{(p-q)/qp} \left( \int\limits_{\mathbb{T}^M} \vert g(r\omega, 0) \vert^p \mathrm{d}\omega \right)^{1/p} \\
&\leq \Vert f \Vert_{H_{pq/(p-q)}(\mathbb{D}^\infty_2)} \Vert g \Vert_{H_p(\mathbb{D}^\infty_2)}.
\end{align*}
Since this holds for every $M\in \mathbb{N}$ and $0<r<1$, then $f\in \mathcal{M}_{\mathbb{D}^\infty_2}(p,q)$ and furthermore $\Vert M_f \Vert_{\mathcal{M}_{\mathbb{D}^\infty_2}(p,q)} \leq \Vert f \Vert_{H_{pq/(p-q)}(\mathbb{D}^\infty_2)},$. Thus $H_{pq/(p-q)} (\mathbb{D}^\infty_2) \subseteq \mathcal{M}_{\mathbb{D}^\infty_2}(p,q)$. The case for $\mathbb{D}^{N}$ with $N\in\mathbb{N}$ follows with the same idea.\\

To check that the converse inclusion holds, take some $F \in \mathcal{M}_{\mathbb{T}^N}(p,q)$ (where $N \in \mathbb{N} \cup \{\infty\}$) and consider the associated multiplication operator $M_F : H_p(\mathbb{T}^N) \to H_{q}(\mathbb{T}^N)$ which, as we know, is continuous. Let us see that it can be extended to a continuous operator on $L_{q}(\mathbb{T}^{N})$. To see this, take a trigonometric polynomial $Q$, that is a finite sum of the form
\[
Q(z)=\sum\limits_{\vert \alpha_i\vert \leq k} a_{\alpha} z^{\alpha} \,,
\]
and note that 
\begin{equation} \label{desc polinomio}
Q= \left( \prod\limits_{j=1}^{M} z_{j}^{-k} \right) \cdot P,
\end{equation}
where $P$ is the polynomial defined as $P:= \sum\limits_{0\leq \beta_i \leq 2k} b_{\beta} z^{\beta}$ and $b_{\beta}= a_{\alpha}$ whenever $\beta = \alpha +(k,\cdots, k, 0)$. Then,
\begin{align*}
\left(\int\limits_{\mathbb{T}^N} \vert F\cdot Q(\omega)\vert^q \mathrm{d}\omega\right)^{1/q} &= \left(\int\limits_{\mathbb{T}^N} \vert F\cdot P(\omega)\vert^q  \prod\limits_{j=1}^{M} \vert \omega_{j}\vert^{-kq} \mathrm{d}\omega\right)^{1/q} = \left(\int\limits_{\mathbb{T}^N} \vert F\cdot P(\omega)\vert^q \mathrm{d}\omega\right)^{1/q} \\
&\leq C \Vert P \Vert_{H_p(\mathbb{T}^N)} = C \left(\int\limits_{\mathbb{T}^N} \vert  P(\omega)\vert^p  \prod\limits_{j=1}^{M} \vert \omega_{j}\vert^{-kp} \mathrm{d}\omega\right)^{1/p} \\
&= C \Vert Q \Vert_{H_p(\mathbb{T}^N)}.
\end{align*}
Consider now an arbitrary $H\in L_p(\mathbb{T}^N)$ and, using \cite[Theorem~5.17]{defant2018Dirichlet} find  a sequence of trigonometric polynomials  $(Q_n)_n$
such that $Q_n \to H$ in $L_p$ and also a.e. on $\mathbb{T}^N$ (taking a subsequence if necessary). We have 
\[
\Vert F\cdot Q_n - F \cdot Q_m \Vert_{H_q(\mathbb{T}^N)} =\Vert F\cdot (Q_n-Q_m) \Vert_{H_q(\mathbb{T}^N)} \leq C \Vert Q_n - Q_m \Vert_{H_p(\mathbb{T}^N)} \to 0
\]
which shows that $(F\cdot Q_n)_n$ is a Cauchy sequence in $L_q(\mathbb{T}^N)$. Since $F\cdot Q_n \to F\cdot H$ a.e. on $\mathbb{T}^N$, then this proves that $F\cdot H \in L_q (\mathbb{T}^N)$ and  $F\cdot Q_n \to F\cdot H$ in $L_q(\mathbb{T}^N)$. 
Moreover, 
\[
\Vert F\cdot H \Vert_{H_q(\mathbb{T}^N)} = \lim \Vert F\cdot Q_n \Vert_{H_q(\mathbb{T}^N)} \leq C \lim \Vert Q_n \Vert_{H_p(\mathbb{T}^N)} = C \Vert H \Vert_{H_p(\mathbb{T}^N)},
\]
and therefore the operator $M_F : L_p(\mathbb{T}^N) \to L_q (\mathbb{T}^N)$ is well defined and bounded. In particular, $\vert F \vert^q \cdot \vert H\vert^q \in L_1(\mathbb{T}^N)$ for every $H\in L_p(\mathbb{T}^N)$.

Now, consider $H\in L_{p/q}(\mathbb{T}^N)$ then $\vert H\vert^{1/q} \in L_{p} (\mathbb{T}^N)$ and $\vert F\vert^q \cdot \vert H\vert \in L_1(\mathbb{T}^N)$ or, equivalently,  $\vert F\vert^q \cdot H \in L_1(\mathbb{T}^N)$. Hence 
\[
\vert F \vert^q \in L_{p/q}(\mathbb{T}^N)^* = L_{p/(p-q)}(\mathbb{T}^N),
\] 
and therefore $F\in L_{pq/(p-q)}(\mathbb{T}^N)$.
To finish the argument, since $\hat{F}(\alpha)=0$ whenever $\alpha \in \mathbb{Z}^N \setminus \mathbb{N}_{0}^N$ then $F\in H_{pq/(p-q)}(\mathbb{T}^N)$. We then conclude that
\[
H_{pq/(p-q)}( \mathbb{T}^N) \subseteq \mathcal{M}_{\mathbb{T}^{N}}(p,q) \,.
\]
In order to see the isometry, given $F\in H_{pq/(p-q)}(\mathbb{T}^N)$ and let $G=\vert F \vert^r \in L_p(\mathbb{T}^N)$ with $r = q/(p-q)$ then $F\cdot G \in L_q(\mathbb{T}^N)$. Let $Q_n$ a sequence of trigonometric polynomials such that $Q_n \to G$ in $L_p(\mathbb{T}^N)$, since $M_F: L_p(\mathbb{T}^N) \to L_q(\mathbb{T}^N)$ is continuous then $F\cdot Q_n = M_F(Q_n) \to F\cdot G$. On the other hand, writing $Q_n$ as \eqref{desc polinomio} we have for each $n\in \mathbb{N}$ a polynomial $P_n$ such that $\Vert F\cdot Q_n \Vert_{L_q(\mathbb{T}^N)} = \Vert F \cdot P_n \Vert_{L_q(\mathbb{T}^N)}$ and $\Vert Q_n \Vert_{L_p(\mathbb{T}^N)} = \Vert P_n \Vert_{L_p(\mathbb{T}^N)}$. Then we have that

\begin{multline*}
\Vert F \cdot G \Vert_{L_q(\mathbb{T}^N)} = \lim\limits_n \Vert F \cdot Q_n \Vert_{L_q(\mathbb{T}^N)} = \lim\limits_n \Vert F \cdot P_n \Vert_{L_q(\mathbb{T}^N)} 
\leq \lim\limits_n \Vert M_F \Vert_{\mathcal{M}_{\mathbb{T}^{N}}(p,q)} \Vert P_n \Vert_{L_p(\mathbb{T}^N)}  \\= \lim\limits_n \Vert M_F \Vert_{\mathcal{M}_{\mathbb{T}^{N}}(p,q)} \Vert Q_n \Vert_{L_p(\mathbb{T}^N)} = \Vert M_F \Vert_{\mathcal{M}_{\mathbb{T}^{N}}(p,q)} \Vert G \Vert_{L_p(\mathbb{T}^N)}.
\end{multline*}
Now, since 
\[
\Vert F \Vert_{L_{pq/(p-q)}(\mathbb{T}^N)}^{p/(p-q)} = \Vert F^{r + 1} \Vert_{L_q(\mathbb{T}^N)} = \Vert F \cdot G \Vert_{L_q(\mathbb{T}^N)}
\]
and 
\[
\Vert F \Vert_{L_{pq/(p-q)}(\mathbb{T}^N)}^{q/(p-q)} = \Vert F^{r} \Vert_{L_p(\mathbb{T}^N)} = \Vert G \Vert_{L_p(\mathbb{T}^N)}
\]
then 
\[
\Vert M_F \Vert_{\mathcal{M}_{\mathbb{T}^{N}}(p,q)} \geq \Vert F \Vert_{L_{pq/(p-q)}}= \Vert F \Vert_{H_{pq/(p-q)}(\mathbb{T}^N)},
\]
as we wanted to show.
 
\ref{descr3} was proved in \cite[Theorem~7]{bayart2002hardy}.

We finish the proof by seeing that~\ref{descr4} holds. On one hand, the previous case and \eqref{inclusiones} immediately give 
the inclusion 
\[
\{0\} \subseteq \mathcal{M}_{\mathbb{T}^{N}}(p,q) \subseteq H_{\infty}(\mathbb{T}^N).
\]
We now show that $\mathcal{M}_{\mathbb{D}_{2}^{N}}(p,q)=\{0\}$ for any $N\in\mathbb{N} \cup \{\infty\}$. We consider in first place the case $N \in \mathbb{N}$. For $1 \leq p < q < \infty$, we fix $f \in \mathcal{M}_{\mathbb{D}^N_2}(p,q)$ and $M_{f}$ the associated multiplication operator from $H_p(\mathbb{D}^N)$ to $H_q(\mathbb{D}^N)$. Now, given $g\in H_{p}(\mathbb{D}^{N}_2)$,  by \eqref{eq: Cole-Gamelin} we have
\begin{equation}\label{ec. desigualdad del libro}
\vert f\cdot g(z) \vert \leq \left( \prod\limits_{j=1}^N \frac{1}{1-\vert z_j \vert^2} \right)^{1/q} \Vert f\cdot g\Vert_{H_q(\mathbb{D}^N_2)} \leq \left( \prod\limits_{j=1}^N \frac{1}{1-\vert z_j \vert^2} \right)^{1/q} C \Vert g \Vert_{H_p(\mathbb{D}^N_2)}.
\end{equation}
Now since $f\in H_{\infty}(\mathbb{D}^N_2)$ and 
\[
\Vert f \Vert_{H_\infty(\mathbb{D}^N)} = \lim\limits_{r\to 1} \sup\limits_{z\in r\mathbb{D}^N_2} \vert f(z) \vert = \lim\limits_{r\to 1} \sup\limits_{z\in r\mathbb{T}^N} \vert f(z) \vert,
\]
then there is a sequence $(u_n)_n\subseteq \mathbb{D}^N$ such that $\Vert u_n \Vert_{\infty} \to 1$ and 
\begin{equation}\label{limite sucesion}
\vert f(u_n) \vert \to \Vert f \Vert_{H_\infty(\mathbb{D}^N_2)}.
\end{equation}
For each $u_n$ there is a non-zero function $g_n\in H_{p}(\mathbb{D}^N)$ (recall \eqref{optima}) such that
\[
\vert g_n(u_n) \vert = \left( \prod\limits_{j=1}^N \frac{1}{1-\vert u_n^j \vert^2} \right)^{1/p} \Vert g_n \Vert_{H_p(\mathbb{D}^N)}.
\]
From this and \eqref{ec. desigualdad del libro} we get
\[
\vert f(u_n) \vert \left( \prod\limits_{j=1}^N \frac{1}{1-\vert u_n^j \vert^2} \right)^{1/p} \Vert g_n \Vert_{H_p(\mathbb{D}^N)} \leq \left( \prod\limits_{j=1}^N \frac{1}{1-\vert u_n^j \vert^2} \right)^{1/q} C \Vert g_n \Vert_{H_p(\mathbb{D}^N)}.
\]
Then,
\[
\vert f(u_n) \vert \left( \prod\limits_{j=1}^N \frac{1}{1-\vert u_n^j \vert^2} \right)^{1/p-1/q} \leq C.
\]
Since $1/p-1/q>0$ we have that $\left( \prod\limits_{j=1}^N \frac{1}{1-\vert u_n^j \vert^2} \right)^{1/p-1/q} \to \infty,$ and then, by the previous inequality, $\vert f(u_n) \vert \to 0$. By \eqref{limite sucesion} this shows that $\Vert f \Vert_{H_\infty(\mathbb{D}^N)}=0$ and this gives the claim for $q<\infty$. Now if $q=\infty$, by noticing that $H_{\infty}(\mathbb{D}^N)$ is contained in $H_{t}(\mathbb{D}^N)$ for every $1 \leq p < t < \infty$ the result follows from the previous case. This concludes the proof for  $N \in \mathbb{N}$.\\
To prove that $\mathcal{M}_{\mathbb{D}^{\infty}_2}(p,q)=\{0\}$, fix again $f \in \mathcal{M}_{\mathbb{D}^{\infty}_2}(p,q).$
By Proposition~\ref{remark multiplicadores}, for every $N \in \mathbb{N}$ the truncated function $f_N \in \mathcal{M}_{\mathbb{D}^N_2}(p,q)$ and therefore, by what we have shown before, is the zero function. Now the proof follows using that $(f_{N})_{N}$ converges pointwise to $f$. \end{proof}

\section{Multiplication operator}

Given a multiplier $D \in \mathfrak{M}(p,q)$, we study in this section several properties of its associated multiplication operator $M_D : \mathcal{H}_p \to \mathcal{H}_q$. In \cite{vukotic2003analytic} Vukoti\'c provides a very complete description of certain  Toeplitz operators for Hardy spaces of holomorphic functions of one variable.  In particular he studies the spectrum, the range and the essential norm of these operators. Bearing in mind the relation between the sets of multipliers that we proved above (Proposition~\ref{charite}), it is natural to ask whether similar properties hold when we look at the multiplication operators on the Hardy spaces of Dirichlet series. 
%

In our first result we characterize which operators are indeed multiplication operators. These happen to be exactly those that commute with the monomials given by the prime numbers.

\begin{theorem}
Let $1\leq p,q \leq \infty$. A bounded operator $T: \mathcal{H}_p \to \mathcal{H}_q$ is a multiplication operator if and only if $T$ commutes with the multiplication operators $M_{\mathfrak{p}_i^{-s}}$ for every $i \in \mathbb{N}$.

The same holds if we replace in each case $\mathcal{H}$ by $\mathcal{H}^{(N)}$ (with $N \in \mathbb{N}$), and considering $M_{\mathfrak{p}_i^{-s}}$ with $1 \leq i \leq N$.
\end{theorem}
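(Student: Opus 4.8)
The plan is to show that multiplication operators are exactly those for which the only obvious algebraic obstruction disappears. The forward implication is immediate: if $T=M_D$ with $D\in\mathfrak{M}(p,q)$, then for every $E$ and every $i$,
\[
M_D\big(M_{\mathfrak{p}_i^{-s}}E\big)=D\cdot(\mathfrak{p}_i^{-s}E)=\mathfrak{p}_i^{-s}\cdot(D\cdot E)=M_{\mathfrak{p}_i^{-s}}\big(M_D E\big),
\]
which is just the commutativity and associativity of the Dirichlet convolution product; note that $M_{\mathfrak{p}_i^{-s}}$ is a well-defined bounded (indeed isometric) operator on each $\mathcal{H}_r$, since on the polytorus side it is multiplication by the coordinate $z_i$ and Haar measure is rotation invariant.

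For the converse, assume $T$ commutes with every $M_{\mathfrak{p}_i^{-s}}$ and set $D:=T(\mathbf 1)$, where $\mathbf 1$ is the constant Dirichlet series $1$; since $\mathbf 1\in\mathcal{H}_p$ we get $D=\sum a_n n^{-s}\in\mathcal{H}_q$. The first step is to compute $T$ on Dirichlet polynomials. If $n=\mathfrak{p}_1^{\alpha_1}\cdots\mathfrak{p}_k^{\alpha_k}$, then $M_{n^{-s}}=M_{\mathfrak{p}_1^{-s}}^{\alpha_1}\cdots M_{\mathfrak{p}_k^{-s}}^{\alpha_k}$ as a composition of commuting operators, so $T$ commutes with $M_{n^{-s}}$ and hence
\[
T(n^{-s})=T\big(M_{n^{-s}}\mathbf 1\big)=M_{n^{-s}}\big(T\mathbf 1\big)=n^{-s}\cdot D .
\]
By linearity $T(P)=D\cdot P$ for every Dirichlet polynomial $P$; in particular $D\cdot P\in\mathcal{H}_q$.

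When $1\le p<\infty$ the Dirichlet polynomials are dense in $\mathcal{H}_p$ (by the very definition of this space), so the argument closes by a continuity and coefficient comparison. Given $E=\sum b_n n^{-s}\in\mathcal{H}_p$, pick polynomials $P_N\to E$ in $\mathcal{H}_p$; then $D\cdot P_N=T(P_N)\to T(E)$ in $\mathcal{H}_q$. Since the $n$-th Dirichlet coefficient is a bounded functional on each $\mathcal{H}_r$ (bounded by the norm via the isometric identification with $H_r(\mathbb{T}^\infty)$), comparing the $m$-th coefficients and using that the $m$-th coefficient of $D\cdot P_N$ is the finite sum $\sum_{jk=m}a_j\,c_k(P_N)\to\sum_{jk=m}a_j b_k$, we obtain $c_m(T(E))=\sum_{jk=m}a_j b_k$ for all $m$. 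Thus the formal Dirichlet product $D\cdot E$ has the same coefficients as $T(E)\in\mathcal{H}_q$, so $D\cdot E=T(E)\in\mathcal{H}_q$; as $E$ was arbitrary, $D\in\mathfrak{M}(p,q)$ and $T=M_D$. (This also handles $p<q$: then part~\ref{descr4} of Theorem~\ref{descripcion} forces $D=0$, i.e.\ $T=0$.)

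The case $p=\infty$ is the main obstacle, since Dirichlet polynomials are no longer dense in $\mathcal{H}_\infty$. Here one first uses part~\ref{descr1} of Theorem~\ref{descripcion}: $D=T(\mathbf 1)\in\mathcal{H}_q=\mathfrak{M}(\infty,q)$ automatically, so $M_D:\mathcal{H}_\infty\to\mathcal{H}_q$ is already bounded and $S:=T-M_D$ is a bounded operator that vanishes on all Dirichlet polynomials and still commutes with every $M_{\mathfrak{p}_i^{-s}}$. To conclude $S=0$ one replaces norm density by a Bohr--Fej\'er (Ces\`aro) approximation: for $E\in\mathcal{H}_\infty$ the Bohr--Fej\'er polynomials $\sigma_N(E)$ satisfy $\|\sigma_N(E)\|_{\mathcal{H}_\infty}\le\|E\|_{\mathcal{H}_\infty}$, converge to $E$ pointwise on $\mathbb{C}_0$ and coefficientwise, while $S(\sigma_N(E))=0$ for all $N$; the boundedness of $S$ together with this controlled (weak/weak-$*$ type) convergence forces $S(E)=0$, hence $T=M_D$. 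The finitely many primes version for $\mathcal{H}^{(N)}$ follows by running exactly the same argument using $\mathfrak{p}_1,\dots,\mathfrak{p}_N$ (and Fej\'er means on $\mathbb{T}^N$ in the case $p=\infty$), invoking the corresponding items of Theorem~\ref{descripcion} for $\mathfrak{M}^{(N)}$. I expect the delicate point to be precisely making the $p=\infty$ passage rigorous, i.e.\ arguing through a weak-$*$ density of polynomials in $\mathcal{H}_\infty$ rather than a norm density.
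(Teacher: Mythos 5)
Your argument is, in substance, the paper's: both directions hinge on iterating the commutation relations to get $T(n^{-s})=n^{-s}\cdot T(\mathbf 1)$, hence $T(P)=D\cdot P$ on Dirichlet polynomials, followed by a density argument. For $1\le p<\infty$ your proof is complete; the only (immaterial) difference is that you identify $T(E)$ with $D\cdot E$ by comparing Dirichlet coefficients, whereas the paper compares point evaluations at $s\in\mathbb{C}_{1/2}$ --- both are continuous functionals on $\mathcal{H}_q$, so the two limit passages are interchangeable. The remark that $p<q$ forces $D=0$ via Theorem~\ref{descripcion}~\ref{descr4} is also consistent with the paper.

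The one genuine gap is the final step of your $p=\infty$ case, and you have correctly located the delicate point but not closed it. From $\Vert\sigma_N(E)\Vert_{\mathcal H_\infty}\le\Vert E\Vert_{\mathcal H_\infty}$ and $\sigma_N(E)\to E$ pointwise and coefficientwise (a weak-$*$ type convergence) one cannot conclude $S(\sigma_N(E))\to S(E)$: a norm-bounded operator is weak--weak continuous but need not be weak-$*$--weak-$*$ continuous, and genuine weak convergence $P_N\to E$ with $P_N$ polynomials is impossible for general $E\in\mathcal H_\infty$ by Mazur's theorem (the weak closure of the subspace of polynomials equals its norm closure, a proper subspace of $\mathcal H_\infty$). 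So ``boundedness of $S$ plus this controlled convergence forces $S(E)=0$'' is an assertion, not an argument. You should know that the paper's own proof makes essentially the same leap: it takes polynomials $P_n\to E$ ``weakly'' and then asserts $T(P_n)\to T(E)$ weakly from the continuity of $T$, which is precisely the step that needs justification. To truly close it one needs an extra input: either an a priori weak-$*$ continuity of $T$, or an argument that exploits the commutation relations beyond density --- for instance, for a fixed coefficient index $m$, a de la Vall\'ee Poussin--type decomposition $E=P+\sum_i n_i^{-s}h_i$ with $P$ a polynomial, $h_i\in\mathcal H_\infty$ and no $n_i$ dividing $m$, which would give $c_m(S(E))=\sum_i c_m\bigl(n_i^{-s}S(h_i)\bigr)=0$ directly. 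As written, your $p=\infty$ case (like the paper's) is a sketch rather than a proof.
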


\begin{proof}
Suppose first that  $T: \mathcal{H}_p \to \mathcal{H}_q$ is a multiplication operator (that is, $T=M_D$ for some Dirichlet series $D$) and for $i \in \mathbb{N}$, let $\mathfrak{p}_i^{-s}$ be a monomial, then 
\[
T \circ M_{\mathfrak{p}_i^{-s}} (E)= D \cdot \mathfrak{p}_i^{-s} \cdot E= \mathfrak{p}_i^{-s} \cdot D \cdot E = M_{\mathfrak{p}_i^{-s}} \circ T (E).
\]
That is, $T$ commutes with $M_{\mathfrak{p}_i^{-s}}$.

For the converse, suppose now that $T: \mathcal{H}_p \to \mathcal{H}_q$ is a bounded operator that commutes with the multiplication operators $M_{\mathfrak{p}_i^{-s}}$ for every $i \in \mathbb{N}$. Let us see that $T = M_D$ with $D = T(1)$. Indeed, for each $\mathfrak{p}_i^{-s}$ and $k\in \mathbb{N}$ we have that 
\[
T((\mathfrak{p}_i^{k})^{-s})=T((\mathfrak{p}_i^{-s})^{k}) = T(M_{\mathfrak{p}_i^{-s}}^{k}(1)) = M_{\mathfrak{p}_i^{-s}}^{k}( T(1)) = (\mathfrak{p}_i^{-s})^{k} \cdot D = (\mathfrak{p}_i^{k})^{-s} \cdot D,
\]
and then given $n\in \mathbb{N}$ and $\alpha \in \mathbb{N}_0^{(\mathbb{N})}$ such that $n = \mathfrak{p}_1^{\alpha_1} \cdots \mathfrak{p}_k^{\alpha_k}$
\[
T(n^{-s})= T( \prod\limits_{j=1}^k (\mathfrak{p}_i^{\alpha_i})^{-s} ) = T ( M_{\mathfrak{p}_1^{-s}}^{\alpha_1} \circ \cdots \circ M_{\mathfrak{p}_k^{-s}}^{\alpha_k} (1) ) = M_{\mathfrak{p}_1^{-s}}^{\alpha_1} \circ \cdots \circ M_{\mathfrak{p}_k^{-s}}^{\alpha_k} ( T(1) ) = (n^{-s}) \cdot D.
\]
This implies that $T(P)= P \cdot D$ for every Dirichlet polynomial $P$. Take now some $E\in \mathcal{H}_p$ and choose a sequence of polynomials  $P_n$ that converges in norm to $E$ if $1 \leq p < \infty$ or weakly if $p= \infty$ (see \cite[Theorems~5.18 and~11.10]{defant2018Dirichlet}).
In any case, if $s \in \mathbb{C}_{1/2}$, the continuity of the evaluation at $s$  (see again \cite[Corollary~13.3]{defant2018Dirichlet}) yields  $P_n(s) \to E(s)$.
Since $T$ is continuous, we have that 
\[
T(E) = \lim\limits_n T(P_n)= \lim\limits_n P_n\cdot D
\]
(where the limit is in the weak topology if  $p=\infty$).
Then for each $s\in \mathbb{C}$ such that $\re s > 1/2$, we have 
\[
T(E)(s) = \lim\limits_n P_n\cdot D(s) = E(s) D(s).
\]
Therefore, $T(E) = D \cdot E$ for every Dirichlet series $E$. In other words, $T$ is equal to $M_D$, which concludes the proof. 
\end{proof}

Given a bounded operator $T: E \to F$ the essential norm is defined as 
\[
\Vert T \Vert_{\ess} = \inf \{ \Vert T - K \Vert : \; K : E \to F \; \text{ compact} \}.
\]
This norm tells us how far from being compact $T$ is.


The following result shows a series of comparisons between essential norm of $M_D : \mathcal{H}_p \to \mathcal{H}_q$ and the norm of $D$, depending on $p$ and $q$. In all cases, as a consequence, the operator is compact if and only if $D=0$.

\begin{theorem} \label{chatruc}
\;
\begin{enumerate}
\item\label{chatruc1} Let $1\leq q < p < \infty$, $D\in \mathcal{H}_{pq/(p-q)}$ and $M_D$ its associated multiplication operator from $\mathcal{H}_p$ to $\mathcal{H}_q$. Then 
\[
\Vert D \Vert_{\mathcal{H}_q} \leq \Vert M_D \Vert_{\ess} \leq \Vert M_D \Vert = \Vert D \Vert_{\mathcal{H}_{pq/(p-q)}}.
\]
\item \label{chatruc2}  Let $1\leq q < \infty$, $D\in \mathcal{H}_q$ and $M_D : \mathcal{H}_\infty \to \mathcal{H}_q$ the multiplication operator. Then 
\[
\frac{1}{2}\Vert D \Vert_{\mathcal{H}_q} \leq \Vert M_D \Vert_{\ess} \leq \Vert M_D \Vert = \Vert D \Vert_{\mathcal{H}_q}.
\]
\end{enumerate}
In particular, $M_D$ is compact if and only if $D=0$. The same equalities hold if we replace $\mathcal{H}$ by $\mathcal{H}^{(N)}$ (with $N \in \mathbb{N}$).
\end{theorem}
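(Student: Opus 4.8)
The plan is to dispose of the easy pieces first and then reduce everything to one test sequence. In both parts the rightmost equality ($\Vert M_D\Vert=\Vert D\Vert_{\mathcal H_{pq/(p-q)}}$ in \ref{chatruc1}, $\Vert M_D\Vert=\Vert D\Vert_{\mathcal H_q}$ in \ref{chatruc2}) is exactly Theorem~\ref{descripcion} (cases~\ref{descr2} and~\ref{descr1}), and $\Vert M_D\Vert_{\ess}\le\Vert M_D\Vert$ is trivial (take $K=0$); so only the leftmost inequalities need work. By the isometric identifications of Proposition~\ref{charite} (valid also for the finite sections) it suffices to argue on $\mathcal H_p=H_p(\mathbb T^\infty)$, the $\mathcal H^{(N)}$ statements being obtained by rerunning the proof inside $H_p(\mathbb T^N)$. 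The key object is the sequence $g_k:=(\mathfrak p_1^{\,k})^{-s}$, which under the Bohr transform is the monomial $\omega_1^k$. Then $\Vert g_k\Vert_{\mathcal H_p}=1$ for every $1\le p\le\infty$; since $\lvert\omega_1^k\rvert=1$ a.e., multiplication by $g_k$ is an isometry of $\mathcal H_q$, so $\Vert M_Dg_k\Vert_{\mathcal H_q}=\Vert D\Vert_{\mathcal H_q}$ for all $k$; and a Riemann--Lebesgue argument (approximating in $L_1(\mathbb T^\infty)$ by trigonometric polynomials, dense by \cite[Theorem~5.17]{defant2018Dirichlet}) gives $\widehat\Phi(-ke_1)\to 0$ for every $\Phi\in L_1(\mathbb T^\infty)$. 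Applying this to $\Phi=\mathcal L_{\mathbb T^\infty}(D)\cdot\overline G$ with $G$ running over $L_{q'}(\mathbb T^\infty)$ (over $L_\infty$ if $q=1$) shows $M_Dg_k=D\cdot g_k\to 0$ weakly in $\mathcal H_q$ whenever $1\le q<\infty$, and taking $D=\mathbf 1$ shows $g_k\to 0$ weakly in $\mathcal H_p$ whenever $1\le p<\infty$.

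For part~\ref{chatruc1}, fix a compact $K\colon\mathcal H_p\to\mathcal H_q$ (here $p<\infty$). Since $g_k\to 0$ weakly and $K$ is compact, $\Vert Kg_k\Vert_{\mathcal H_q}\to 0$, hence
\[
\Vert M_D-K\Vert\ \ge\ \limsup_k\Vert(M_D-K)g_k\Vert_{\mathcal H_q}\ \ge\ \limsup_k\big(\Vert M_Dg_k\Vert_{\mathcal H_q}-\Vert Kg_k\Vert_{\mathcal H_q}\big)\ =\ \Vert D\Vert_{\mathcal H_q}.
\]
Taking the infimum over $K$ gives $\Vert D\Vert_{\mathcal H_q}\le\Vert M_D\Vert_{\ess}$, which together with $\Vert M_D\Vert_{\ess}\le\Vert M_D\Vert$ is the asserted chain.

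For part~\ref{chatruc2} ($p=\infty$) the same argument breaks, because $(g_k)$ need not be weakly null in $\mathcal H_\infty$ and one cannot conclude $Kg_k\to 0$. Instead I would do the following. Fix a compact $K\colon\mathcal H_\infty\to\mathcal H_q$; since $(g_k)$ is bounded, after passing to a subsequence $Kg_{k_j}\to y$ in $\mathcal H_q$ for some $y$. As $M_Dg_{k_j}=D\cdot g_{k_j}\to 0$ weakly in $\mathcal H_q$, we get $(M_D-K)g_{k_j}\to -y$ weakly, so by weak lower semicontinuity of the norm $\Vert M_D-K\Vert\ge\liminf_j\Vert(M_D-K)g_{k_j}\Vert_{\mathcal H_q}\ge\Vert y\Vert_{\mathcal H_q}$. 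On the other hand, for each $j$,
\[
\Vert M_D-K\Vert\ \ge\ \Vert(M_D-K)g_{k_j}\Vert_{\mathcal H_q}\ \ge\ \Vert M_Dg_{k_j}\Vert_{\mathcal H_q}-\Vert Kg_{k_j}\Vert_{\mathcal H_q}\ =\ \Vert D\Vert_{\mathcal H_q}-\Vert Kg_{k_j}\Vert_{\mathcal H_q}\ \xrightarrow[j\to\infty]{}\ \Vert D\Vert_{\mathcal H_q}-\Vert y\Vert_{\mathcal H_q}.
\]
Hence $\Vert M_D-K\Vert\ge\max\{\Vert y\Vert_{\mathcal H_q},\ \Vert D\Vert_{\mathcal H_q}-\Vert y\Vert_{\mathcal H_q}\}\ge\tfrac12\Vert D\Vert_{\mathcal H_q}$, and the infimum over $K$ yields the lower bound. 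The final assertion is then immediate in both cases: if $M_D$ is compact, $\Vert M_D\Vert_{\ess}=0$, so $\Vert D\Vert_{\mathcal H_q}=0$ and $D=0$; the converse is trivial. The $\mathcal H^{(N)}$ statements follow line by line, since $g_k=(\mathfrak p_1^{\,k})^{-s}\in\mathcal H^{(N)}$ for every $N\ge 1$, Theorem~\ref{descripcion} holds in the $\mathcal H^{(N)}$ form, and the Riemann--Lebesgue step is already valid on the finite polytorus $\mathbb T^N$.

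The only genuinely delicate point is part~\ref{chatruc2}: the absence of weak nullity of $(g_k)$ in $\mathcal H_\infty$ forces one to carry along the possibly nonzero limit $y$ of a subsequence of $(Kg_{k_j})$ and to balance $\Vert y\Vert_{\mathcal H_q}$ against $\Vert D\Vert_{\mathcal H_q}-\Vert y\Vert_{\mathcal H_q}$; this balancing is precisely what yields the constant $\tfrac12$ rather than $1$, and it appears to be intrinsic to this method. Everything else is routine once the isometry property of multiplication by $\omega_1^k$ and the infinite-dimensional Riemann--Lebesgue lemma are in place.
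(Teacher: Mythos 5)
Your proposal is correct, and part \ref{chatruc1} runs essentially as in the paper: the same test sequence $(2^k)^{-s}$ (alias $\omega_1^k$), the same isometry $\Vert D\cdot(2^k)^{-s}\Vert_{\mathcal H_q}=\Vert D\Vert_{\mathcal H_q}$, and the same use of compactness on a weakly null sequence; you justify weak nullity by Riemann--Lebesgue on $\mathbb T^\infty$ where the paper invokes Lemma~\ref{bastia} together with reflexivity of $\mathcal H_p$ --- an equally valid and slightly more self-contained route. The genuine divergence is in part \ref{chatruc2}. The paper follows Demazeux's scheme: it extracts a subsequence along which $K(E_{n_j})$ is Cauchy, tests $M_D-K$ on the normalized differences $(E_{n_j}-E_{n_l})/2$ (whence the $1/2$), and then needs the vertical translates $D_\delta$ and \cite[Proposition~11.20]{defant2018Dirichlet} to make $\Vert(D\cdot E_{n_l})_\delta\Vert_{\mathcal H_q}$ small while keeping $\Vert(D\cdot E_{n_j})_\delta\Vert_{\mathcal H_q}$ close to $\Vert D\Vert_{\mathcal H_q}$. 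You instead keep single test vectors, let $y$ be a norm limit point of $(K(g_{k_j}))_j$, and play the two bounds $\Vert M_D-K\Vert\ge\Vert y\Vert_{\mathcal H_q}$ (from $D\cdot g_{k_j}\to0$ weakly in $\mathcal H_q$ plus weak lower semicontinuity of the norm) and $\Vert M_D-K\Vert\ge\Vert D\Vert_{\mathcal H_q}-\Vert y\Vert_{\mathcal H_q}$ against each other. This is cleaner: it avoids the $\delta$-translation machinery entirely and isolates where the constant $1/2$ comes from, exactly as you observe. The price is that the crucial input becomes the weak nullity of $F\cdot\omega_1^k$ in $H_q(\mathbb T^\infty)$, i.e.\ the Riemann--Lebesgue lemma for $L_1(\mathbb T^\infty)$ applied to $F\overline G$ with $G\in L_{q'}(\mathbb T^\infty)$; that lemma is standard (approximate in $L_1$ by trigonometric polynomials, which kill all but finitely many of the frequencies $-ke_1$), so the argument is complete. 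The remaining assertions (the right-hand equalities via Theorem~\ref{descripcion}, the compactness corollary, and the $\mathcal H^{(N)}$ case) are handled as in the paper.
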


We start with a lemma based on \cite[Proposition~2]{brown1984cyclic} for Hardy spaces of holomorphic functions. We prove that weak-star convergence and uniformly convergence on half-planes are equivalent on Hardy spaces of Dirichlet series. We are going to use that $\mathcal{H}_{p}$ is a dual space for every $1 \leq p < \infty$. For $1<p<\infty$ this is obvious because the space is reflexive. For $p=1$ in \cite[Theorem~7.3]{defantperez_2018} it is shown, for Hardy spaces of vector valued Dirichlet series, that $\mathcal{H}_{1}(X)$ is a dual space if and only if $X$ has the Analytic Radon-Nikodym property. Since $\mathbb{C}$ has the ARNP, this gives what we need. We include here an alternative proof in more elementary terms.

\begin{proposition} \label{basile}
The space $\mathcal{H}_1$ is a dual space. 
\end{proposition}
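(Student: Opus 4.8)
The plan is to realize $\mathcal{H}_1$ as the dual of a concrete Banach space by exhibiting a predual built from the identification $\mathcal{H}_1 \cong H_1(\mathbb{T}^\infty)$ given by the Bohr transform. Since $H_1(\mathbb{T}^\infty)$ is a closed subspace of $L_1(\mathbb{T}^\infty)$ defined by the vanishing of Fourier coefficients on $\mathbb{Z}^{(\mathbb{N})}\setminus \mathbb{N}_0^{(\mathbb{N})}$, the natural candidate for the predual is a quotient of a space of continuous functions, or equivalently the space $L_1(\mathbb{T}^\infty)/H_1^{\perp}$-type construction is not quite what we want — instead I would use the classical one-variable idea: $H_1(\mathbb{T})$ is the dual of $C(\mathbb{T})/A_0$ where $A_0$ is the disk algebra functions vanishing at $0$, or more cleanly, one works with the space $\mathcal{C}$ of those continuous functions on $\mathbb{T}^\infty$ whose negative Fourier spectrum is trivial, and shows $H_1 = (\mathcal{C})^*$ via an appropriate pairing.

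More concretely, first I would fix the isometric identification $\mathcal{H}_1 = H_1(\mathbb{T}^\infty)$ and reduce the problem to showing $H_1(\mathbb{T}^\infty)$ is a dual space. Then I would introduce the predual explicitly: let $A(\mathbb{T}^\infty)$ denote the closure in $C(\mathbb{T}^\infty)$ of the analytic trigonometric polynomials (those with spectrum in $\mathbb{N}_0^{(\mathbb{N})}$), and consider $X := C(\mathbb{T}^\infty)/A_0(\mathbb{T}^\infty)$ where $A_0$ consists of the elements of $A(\mathbb{T}^\infty)$ with vanishing $0$-th coefficient — wait, the cleaner route is the F. and M. Riesz / Hahn–Banach approach. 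The key steps in order: (1) identify the annihilator of $A(\mathbb{T}^\infty)$ inside $M(\mathbb{T}^\infty) = C(\mathbb{T}^\infty)^*$ using an infinite-dimensional F. and M. Riesz theorem — measures annihilating the analytic polynomials are absolutely continuous and correspond to $\overline{H_1^0}$; (2) deduce by duality that $\big(C(\mathbb{T}^\infty)/A(\mathbb{T}^\infty)\big)^* \cong A(\mathbb{T}^\infty)^{\perp}$ which is a conjugate-type copy of an $H_1$-space; (3) unravel the conjugations and shifts to land exactly on $H_1(\mathbb{T}^\infty)$ isometrically.

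Alternatively — and this is likely the route the authors take given the phrase "more elementary terms" — I would use the finite-dimensional results together with the weak-star limit structure. For each $N$, $H_1(\mathbb{T}^N)$ is a known dual space (classical result for the polydisk, e.g. via $H_1(\mathbb{T}^N) = (C(\mathbb{T}^N)/A_N)^*$), with predual $X_N$. The spaces $X_N$ form an increasing chain under the natural inclusions dual to the projections $F \mapsto F_N$, and I would let $X$ be the closure of $\bigcup_N X_N$ in an appropriate norm, then show $X^* \cong H_1(\mathbb{T}^\infty)$ isometrically using Remark~\ref{manon} (that $F_N \to F$ in $H_1$) to pass between the finite and infinite levels, checking that the weak-star topology induced by $X$ on bounded sets corresponds to the coefficientwise convergence. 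The pairing would be $\langle F, \varphi \rangle = \lim_N \langle F_N, \varphi_N \rangle$, well-defined because $\|F_N\|$ is bounded and the finite pairings are compatible.

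The main obstacle I anticipate is the infinite-dimensional F. and M. Riesz theorem (or, in the elementary approach, the verification that the candidate predual $X$ genuinely has $H_1(\mathbb{T}^\infty)$ as its dual and not something larger — i.e., that every bounded functional on $X$ arises from an $H_1$ function, which requires a weak-star compactness / normal families argument to extract the limiting function from its finite truncations). Concretely, given $\Lambda \in X^*$, its restrictions to $X_N$ give functions $F^{(N)} \in H_1(\mathbb{T}^N)$ with $\sup_N \|F^{(N)}\|_{H_1} \le \|\Lambda\|$ and compatible coefficients; one must produce $F \in H_1(\mathbb{T}^\infty)$ with those coefficients, and here the natural tool is that a coefficientwise-convergent bounded sequence in $H_1(\mathbb{T}^\infty)$ need not converge in $H_1$, so one instead argues that the $F^{(N)}$, viewed in $H_1(\mathbb{T}^\infty)$, have a weak-star cluster point — but that presupposes $H_1(\mathbb{T}^\infty)$ is a dual space, which is circular. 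The way out is to first establish the duality for a dense subspace and use the boundedness to control the extension, or to invoke that $L_1(\mathbb{T}^\infty)$-bounded sequences that are Cauchy in measure converge in $L_1$, combined with the subsequential a.e.-convergence from Remark~\ref{manon}; making this precise is the crux of the argument.
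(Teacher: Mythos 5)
Your proposal never actually closes; both routes you sketch stall at exactly the points you yourself identify as the crux, and the one concrete escape you offer is false. The claim that ``$L_1$-bounded sequences that are Cauchy in measure converge in $L_1$'' is wrong: $n\chi_{[0,1/n]}$ on $[0,1]$ is bounded in $L_1$ and converges to $0$ in measure (and a.e.) but not in $L_1$ --- without uniform integrability, a.e.\ or in-measure convergence of a norm-bounded sequence gives nothing in $L_1$. So the step ``extract $F\in H_1(\mathbb{T}^\infty)$ from the compatible truncations $F^{(N)}$'' remains open in your second route. (It \emph{can} be repaired: the correct tool is the fact, used elsewhere in the paper via \cite[Corollary~13.9]{defant2018Dirichlet}, that a formal series with $\sup_N\Vert F_N\Vert_{H_1(\mathbb{T}^N)}<\infty$ does define an element of $H_1(\mathbb{T}^\infty)$; but you do not invoke this, and it is the whole content of the hard direction.) Your first route likewise rests on an infinite-dimensional F.\ and M.\ Riesz theorem that you assert rather than prove; this is a genuinely delicate result (the positive cone $\mathbb{N}_0^{(\mathbb{N})}$ is not a half-space for an order on $\mathbb{Z}^{(\mathbb{N})}$, so Helson--Lowdenslager does not apply off the shelf), and even in finitely many variables the identification of the predual of $H_1(\mathbb{T}^N)$ requires Bochner's several-variable version of the theorem, which you also leave uncited and unproved.

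For comparison, the paper avoids constructing a predual altogether. It works on the polydisk side and applies the Dixmier--Ng-type criterion of Kaijser \cite{kaijser1977note}: a Banach space is a dual space if its closed unit ball is compact in the weak topology induced by some separating subset of the dual. The Cole--Gamelin inequality \eqref{eq: Cole-Gamelin} bounds $B_{H_1(\mathbb{D}_2^\infty)}$ uniformly on compact subsets of $\mathbb{D}_2^\infty$, Montel's theorem gives relative compactness for the topology $\tau_0$ of uniform convergence on compacta, a direct Fatou-type estimate shows the ball is $\tau_0$-closed, and on the ball $\tau_0$ dominates the topology of pointwise convergence, i.e.\ the weak topology induced by the (separating) point evaluations. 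This yields the conclusion in a few lines, with the predual implicitly the closed span of the evaluations. If you want to salvage your approach, either supply the infinite-dimensional brothers Riesz theorem (see \cite{aleman2019fatou}) or replace your in-measure argument by the truncation criterion above; as written, the proof has a genuine gap.
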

\begin{proof}
Denote by $(B_{H_1}, \tau_0)$ the closed unit ball of $H_1(\mathbb{D}_2^\infty)$, endowed with the topology $\tau_0$ given by the uniform convergence on compact sets. Let us show that $(B_{H_1}, \tau_0)$ is a compact set. Note first that, given a compact $K\subseteq \ell_2$ and $\varepsilon >0$, there exists $j_0 \in \mathbb{N}$ such that $\sum\limits_{j\geq j_0}^\infty \vert z_j \vert^2 < \varepsilon$ for all $z\in K$ \cite[Page 6]{diestel2012sequences}.  Then, from Cole-Gamelin inequality~\eqref{eq: Cole-Gamelin},  the set
\[
\{f(K) : f \in B_{H_1} \} \subset \mathbb{C}
\]
is bounded for each compact set $K$. By Montel's theorem (see e.g. \cite[Theorem~15.50]{defant2018Dirichlet}), $(B_{H_1},\tau_0)$ is relatively compact.
We now show that $(B_{H_1}, \tau_0)$ is closed. Indeed, suppose now that $(f_\alpha) \subset B_{H_1}$ is a net that converges to $B_{H_1}$ uniformly on compact sets, then we obviously have 
\[
\int\limits_{\mathbb{T}^N} \vert f(r\omega,0,0, \cdots) \vert \mathrm{d} \omega 
\leq \int\limits_{\mathbb{T}^N} \vert f(r\omega,0,0, \cdots) -f_\alpha(r\omega,0,0, \cdots) \vert \mathrm{d} \omega + \int\limits_{\mathbb{T}^N} \vert f_\alpha(r\omega,0,0, \cdots) \vert \mathrm{d} \omega.
\]
Since the first term tends to $0$ and the second term is less than or equal to $1$ for every $N \in \mathbb{N}$ and every $0 < r <1$, then the limit function $f$ belongs to $B_{H_1}$. 
Thus, $(B_{H_1}, \tau_0)$ is compact. \\
We consider now the set of functionals
\[
\{ev_z: H_1(\mathbb{D}_2^\infty) \to \mathbb C : z \in \mathbb{D}_2^\infty\}.
\]
Note that the weak topology $w(H_1,E)$ is exactly the topology given by the pointwise convergence. Thus, since a priori $\tau_0$ is clearly a stronger topology than $w(H_1,E)$ we have that $(B_{H_1},w(H_1,E))$ is also compact.  
Since  $E$ separates points, by \cite[Theorem~1]{kaijser1977note}, $H_1(\mathbb{D}_2^\infty)$ is a dual space and hence, using the Bohr transform, $\mathcal{H}_1$ also is a dual space.
\end{proof}

\begin{lemma}\label{bastia}
Let $1\leq p <\infty$ and $(D_n) \subseteq \mathcal{H}_p$ then the following statements are equivalent
\begin{enumerate}
\item \label{bastia1} $D_n \to 0$ in the weak-star topology.
\item \label{bastia2} $D_n(s) \to 0$ for each $s\in \mathbb{C}_{1/2}$ and $\Vert D_n \Vert_{\mathcal{H}_p} \leq C$ for some $C<0$.
\item \label{bastia3} $D_n \to 0$ uniformly on each half-plane $\mathbb{C}_{\sigma}$ with $\sigma > 1/2$ and $\Vert D_n \Vert_{\mathcal{H}_p} \leq C$  for some $C<0$.
\end{enumerate}
\end{lemma}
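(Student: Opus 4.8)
Throughout, I will use the Bohr lift: writing $f_n=\mathcal L_{\mathbb D^\infty_2}(D_n)\in H_p(\mathbb D^\infty_2)$, we have $\Vert f_n\Vert_{H_p(\mathbb D^\infty_2)}=\Vert D_n\Vert_{\mathcal H_p}$ and $D_n(s)=f_n(z(s))$ for $\re s>1/2$, where $z(s):=(\mathfrak p_j^{-s})_j$; note that $z(s)\in\mathbb D^\infty_2$ because $\sum_j\vert\mathfrak p_j^{-s}\vert^2\le\zeta(2\re s)<\infty$ and $\vert\mathfrak p_j^{-s}\vert<1$. In particular, the evaluation $ev_s$ on $\mathcal H_p$ corresponds, under the Bohr lift, to the point evaluation $ev_{z(s)}$ on $H_p(\mathbb D^\infty_2)$. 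The plan is to prove the equivalences $\ref{bastia1}\Leftrightarrow\ref{bastia2}$ and $\ref{bastia2}\Leftrightarrow\ref{bastia3}$; since every $s\in\mathbb C_{1/2}$ lies in some $\mathbb C_\sigma$ with $\sigma>1/2$, the implication $\ref{bastia3}\Rightarrow\ref{bastia2}$ is trivial, so the real content lies in $\ref{bastia1}\Leftrightarrow\ref{bastia2}$ and, above all, in $\ref{bastia2}\Rightarrow\ref{bastia3}$.

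For $\ref{bastia1}\Leftrightarrow\ref{bastia2}$ I would use that $\mathcal H_p$ is a dual space: reflexive if $p>1$, and by Proposition~\ref{basile} if $p=1$ (with predual the closed linear span of the point evaluations $ev_z$, by the construction via \cite{kaijser1977note}). Then $ev_s$ is weak-star continuous on $\mathcal H_p$ (automatic for $p>1$; for $p=1$ because $ev_s=ev_{z(s)}$ lies in the predual), and since weak-star convergent sequences are norm bounded (uniform boundedness principle on the predual), this yields $\ref{bastia1}\Rightarrow\ref{bastia2}$. For the converse, the key point is that $\{ev_s:s\in\mathbb C_{1/2}\}$ is a total subset of the predual: a nonzero element of $\mathcal H_p=(\text{predual})^{*}$ annihilating every $ev_s$ would be a Dirichlet series vanishing on $\mathbb C_{1/2}$, hence $0$. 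As a norm-bounded sequence that converges against a total subset of the predual converges weak-star, $\ref{bastia2}\Rightarrow\ref{bastia1}$ follows.

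The heart of the matter is $\ref{bastia2}\Rightarrow\ref{bastia3}$. The subtlety is that $\mathbb C_\sigma$ is unbounded, so uniform convergence there is strictly stronger than locally uniform convergence on $\mathbb C_{1/2}$, and a one-variable Vitali argument cannot control the direction $\vert\im s\vert\to\infty$; the remedy is to move to the polydisc, where that direction is compactified. Concretely, I would observe that $Q_\sigma:=\{w:\vert w_j\vert\le\mathfrak p_j^{-\sigma}\text{ for all }j\}$ is a compact subset of $\mathbb D^\infty_2$ (it is $\ell_2$-bounded with uniformly small tails, since $\sum_j\mathfrak p_j^{-2\sigma}\le\zeta(2\sigma)<\infty$, and $\vert w_j\vert\le 2^{-\sigma}<1$), and that $z(\mathbb C_\sigma)\subseteq Q_\sigma$. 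The sequence $(f_n)$ lies in the ball $C\,B_{H_p(\mathbb D^\infty_2)}$, which is compact for the topology $\tau_0$ of uniform convergence on compact sets (Montel's theorem together with Fatou's lemma, exactly as in the proof of Proposition~\ref{basile}). Hence any subsequence of $(f_n)$ has a $\tau_0$-convergent further subsequence $(f_{n_k})$ whose limit $f$ again lies in $C\,B_{H_p(\mathbb D^\infty_2)}$, so $f$ is associated with some $D\in\mathcal H_p$; by $\ref{bastia2}$, $D(s)=f(z(s))=\lim_k f_{n_k}(z(s))=\lim_k D_{n_k}(s)=0$ for every $s\in\mathbb C_{1/2}$, whence $D=0$ and $f=0$. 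Therefore every subsequence of $(f_n)$ has a further subsequence converging to $0$ uniformly on compacta, so $f_n\to 0$ uniformly on $Q_\sigma$; restricting to $z(\mathbb C_\sigma)\subseteq Q_\sigma$ gives $\sup_{s\in\mathbb C_\sigma}\vert D_n(s)\vert\to 0$, which is $\ref{bastia3}$.

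The main obstacle is precisely this last implication, and within it the recognition that the right tool is not Vitali's theorem on a planar half-plane but Montel's theorem on the infinite polydisc, exploiting that the fibre $Q_\sigma$ of the map $s\mapsto z(s)$ over an abscissa $\sigma>1/2$ is a compact subset of $\mathbb D^\infty_2$. The remaining ingredients — the uniform-boundedness and totality arguments behind $\ref{bastia1}\Leftrightarrow\ref{bastia2}$, the compactness of $Q_\sigma$, and the uniqueness of Dirichlet series on a half-plane — are routine.
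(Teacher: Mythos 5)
Your proof is correct, and while it rests on the same two pillars as the paper's (a Montel-type compactness theorem and the dual-space structure of $\mathcal H_p$), both substantive steps are carried out by genuinely different means. For \ref{bastia2}$\Rightarrow$\ref{bastia3}, the paper stays on the Dirichlet-series side: it argues by contradiction, uses a Montel theorem for $\mathcal{H}_p$ to extract a limit of the vertical translates $\sum a_m^{n_j}m^{-\delta}m^{-s}$, identifies that limit as $0$ from pointwise convergence, and then converts $\mathcal H_p$-smallness of the translates into a sup-bound on $\mathbb C_{1/2+\sigma}$ via the Cole--Gamelin inequality $\vert D(s)\vert\le \zeta(2\re s)^{1/p}\Vert D\Vert_{\mathcal H_p}$. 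You instead transfer to $H_p(\mathbb D^\infty_2)$ and observe that the image of $\mathbb C_\sigma$ under $s\mapsto(\mathfrak p_j^{-s})_j$ lies in the compact set $Q_\sigma$, so uniform convergence on an unbounded half-plane reduces to uniform convergence on a single compact set, which follows from the $\tau_0$-compactness of $C\,B_{H_p(\mathbb D^\infty_2)}$ plus uniqueness of the limit (the identification of the limit as $0$ correctly passes through the associated Dirichlet series, since the curve $z(\mathbb C_{1/2})$ alone need not be a uniqueness set for arbitrary holomorphic functions). This avoids the translation trick and makes transparent why the direction $\vert\operatorname{Im} s\vert\to\infty$ causes no trouble. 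For the equivalence with \ref{bastia1}, the paper proves \ref{bastia3}$\Rightarrow$\ref{bastia1} by showing that the identity $(B_{\mathcal H_p},w^*)\to(B_{\mathcal H_p},\tau_0)$ is a continuous bijection from a compact space to a Hausdorff one, hence a homeomorphism; you prove \ref{bastia2}$\Rightarrow$\ref{bastia1} directly from norm-boundedness and the totality of the evaluations in the predual via an $\varepsilon/3$-argument. Your route is more elementary but leans explicitly on the fact that, for $p=1$, the predual produced in Proposition~\ref{basile} is generated by point evaluations, which is indeed what Kaijser's construction yields; the compactness and $\tau_0$-closedness of the ball for all $1\le p<\infty$ (not just $p=1$) are justified exactly as in Proposition~\ref{basile} and Lemma~\ref{maciel}.
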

\begin{proof}
The implication~\ref{bastia1} then~\ref{bastia2} is verified by the continuity of the evaluations in the weak-star topology, and because the convergence in this topology implies that the sequence is bounded.

Let us see that~\ref{bastia2} implies~\ref{bastia3}. Suppose not, then there exists $\varepsilon>0$, a subsequence $(D_{n_j})_j$ and a half-plane $\mathbb{C}_\sigma$ with $\sigma > 1/2$ such that $\sup\limits_{s \in \mathbb{C}_\sigma} \vert D_{n_j}(s) \vert \geq \varepsilon$. Since $D_{n_j} = \sum\limits_{m} a_m^{n_j} m^{-s}$ is uniformly bounded, by Montel's theorem for $\mathcal{H}_p$ (see \cite[Theorem~3.2]{defant2021frechet}), there exists $D = \sum\limits_{m} a_m m^{-s} \in \mathcal{H}_p$ such that
\[
\sum\limits_{m} \frac{a_m^{n_j}}{m^{\delta}} m^{-s} \to \sum\limits_{m} \frac{a_m}{m^{\delta}} m^{-s} \; \text{in} \; \mathcal{H}_p
\]
for every $\delta >0$. Given $s \in \mathbb{C}_{1/2}$, we write $s= s_0 + \delta$ with $\delta >0$ and $s_0 \in \mathbb{C}_{1/2}$, to have
\[
D_{n_j}(s) = \sum\limits_{m} a_m^{n_j} m^{-(s_0 + \delta)} = \sum\limits_{m} \frac{a_m^{n_j}}{m^{\delta}} m^{-s_0} \to \sum\limits_{m} \frac{a_m}{m^{\delta}} m^{-s_0} = D(s_0+\delta) = D(s).
\]
We conclude that $D=0$ and by Cole-Gamelin inequality for Dirichlet series (see \cite[Corollary~13.3]{defant2018Dirichlet}) we have
\begin{align*}
\varepsilon &\leq \sup\limits_{\re s > 1/2 + \sigma} \vert D_{n_j} (s) \vert  = \sup\limits_{\re s > 1/2 + \sigma/2} \vert D_{n_j} (s + \sigma/2) \vert \\
&=  \sup\limits_{\re s > 1/2 + \sigma/2} \vert \sum\limits_{m} \frac{a_m^{n_j}}{m^{\sigma/2}} m^{-s} \vert \leq \zeta( 2 \re s)^{1/p} \Bigg\Vert \sum\limits_{m} \frac{a_m^{n_j}}{m^{\sigma/2}} m^{-s} \Bigg\Vert_{\mathcal{H}_p}\\
&\leq  \zeta(1+ \sigma)^{1/p} \Bigg\Vert \sum\limits_{m} \frac{a_m^{n_j}}{m^{\sigma/2}} m^{-s} \Bigg\Vert_{\mathcal{H}_p} \to 0,
\end{align*}
 for every $\sigma >0$, which is a contradiction.

To see that~\ref{bastia3} implies~\ref{bastia1}, let $B_{\mathcal{H}_p}$ denote the closed unit ball of $\mathcal{H}_{1}$. Since for each $1 \leq p <\infty$ the space $\mathcal{H}_{p}$ is a dual space, by Alaouglu's theorem, $(B_{\mathcal{H}_p}, w^*)$ (i.e. endowed with the weak-star topology) is compact. On the other hand $(B_{\mathcal{H}_p}, \tau_{0})$ (that is, endowed with the topology of uniform convergence on compact sets) is a Hausdorff topological space. If we show that the identity $Id : (B_{\mathcal{H}_p}, w^*) \to (B_{\mathcal{H}_p}, \tau_{0})$ is continuous, then it is a homeomorphism and the proof is completed. To see this let us note first that $\mathcal{H}_p$ is separable (note that the set of Dirichlet polynomials with rational coefficients is dense in $\mathcal{H}_p$) and then  $(B_{\mathcal{H}_p}, w^*)$ is  metrizable (see \cite[Theorem~5.1]{conway1990course}). Hence it suffices to work with sequences. If a sequence $(D_{n})_{n}$ converges in $w^{*}$ to some $D$, then in particular $(D_{n}-D)_{n}$ $w^{*}$-converges to $0$ and, by what we just have seen, it converges uniformly on compact sets. This shows that $Id$ is continuous, as we wanted.
%
%
%
\end{proof}

Now we prove Theorem~\ref{chatruc}. The arguments should be compared with \cite[Propositions~4.3 and~5.5]{demazeux2011essential} where similar statements have been obtained for weighted composition operators for holomorphic functions of one complex variable.

\begin{proof}[Proof of Theorem~\ref{chatruc}]
\ref{chatruc1} By definition $\Vert M_D \Vert_{\ess} \leq \Vert M_D \Vert = \Vert D \Vert_{\mathcal{H}_{pq/(p-q)}}$. In order to see the lower bound,
for each $n \in \mathbb{N}$ consider the monomial $E_n= (2^n)^{-s} \in \mathcal{H}_p$. Clearly $\Vert E_n \Vert_{\mathcal{H}_p} =1$ for every $n$, and $E_n(s) \to 0$ for each $s\in \mathbb{C}_{1/2}$. Then, by Lemma~\ref{bastia}, $E_n\to 0$ in the weak-star topology.

Take now some compact operator $K: \mathcal{H}_p \to \mathcal{H}_q$ and note that, since $\mathcal{H}_p$ is reflexive, we have $K(E_n) \to 0$, and hence
\begin{align*}
\Vert M_D -K \Vert \geq \limsup\limits_{n\to \infty} \Vert M_D(E_n) & - K(E_n) \Vert_{\mathcal{H}_q} \\
& \geq \limsup\limits_{n\to \infty} \Vert D\cdot E_n \Vert_{\mathcal{H}_q} -\Vert K(E_n) \Vert_{\mathcal{H}_q} = \Vert D \Vert_{\mathcal{H}_q}.
\end{align*}

\ref{chatruc2} Let $K: \mathcal{H}_\infty \to \mathcal{H}_q$ be a compact operator, and take again $E_n= (2^n)^{-s} \in \mathcal{H}_\infty$ for each $n\in \mathbb{N}$. Since $\Vert E_n \Vert_{\mathcal{H}_\infty} =1$ then there exists a subsequence $(E_{n_j})_j$ such that $(K(E_{n_j}))_j$ converges in $\mathcal{H}_q$. Given $\varepsilon > 0$ there exists $m\in \mathbb{N}$ such that if $j,l \geq m$ then 
\[
\Vert K(E_{n_j})-K(E_{n_l}) \Vert_{\mathcal{H}_q} < \varepsilon.
\]
On the other hand, if $D=\sum a_k k^{-s}$ then $D\cdot E_{n_l}= \sum a_k (k\cdot 2^{n_l})^{-s}$ and by \cite[Proposition~11.20]{defant2018Dirichlet} the norm in $\mathcal{H}_q$ of 
\[
(D\cdot E_{n_l})_\delta = \sum \frac{a_k}{(k\cdot 2^{n_l})^{\delta}} (k\cdot 2^{n_l})^{-s}
\]
tends increasingly to $\Vert D \cdot E_{n_l}\Vert_{\mathcal{H}_q} = \Vert D \Vert_{\mathcal{H}_q}$ when $\delta \to 0$. Fixed $j\geq m$, there exists $\delta >0$ such that 
\[
\Vert (D\cdot E_{n_j})_\delta \Vert_{\mathcal{H}_q} \geq \Vert D \Vert_{\mathcal{H}_q} - \varepsilon.
\]
Given that $\Vert \frac{E_{n_j} - E_{n_l}}{2} \Vert_{\mathcal{H}_\infty} = 1$ for every $j \not= l$ then
\begin{align*}
\Vert M_D - K \Vert & \geq \Bigg\Vert (M_D -K)  \frac{E_{n_j} - E_{n_l}}{2} \Bigg\Vert_{\mathcal{H}_q} \\
&\geq \frac{1}{2} \Vert (D \cdot E_{n_j} - D \cdot E_{n_l})_{\delta} \Vert_{\mathcal{H}_q} - \frac{1}{2} \Vert K(E_{n_j})-K(E_{n_l}) \Vert_{\mathcal{H}_q} \\
& >\frac{1}{2} (\Vert (D \cdot E_{n_j})_{\delta} \Vert_{\mathcal{H}_q} - \Vert (D \cdot E_{n_l})_{\delta} \Vert_{\mathcal{H}_q}) - \varepsilon/2 \\
& \geq \frac{1}{2} \Vert D \Vert_{\mathcal{H}_q} - \frac{1}{2} \Vert (D \cdot E_{n_l})_{\delta} \Vert_{\mathcal{H}_q} - \varepsilon.
\end{align*}
Finally, since 
\[
\Vert (D \cdot E_{n_l})_{\delta} \Vert_{\mathcal{H}_q} \leq \Vert D_\delta \Vert_{\mathcal{H}_q} \Vert (E_{n_l})_{\delta} \Vert_{\mathcal{H}_\infty} \leq \Vert D_\delta \Vert_{\mathcal{H}_q} \Vert \frac{(2^{n_l})^{-s}}{2^{n_l \delta}} \Vert_{\mathcal{H}_\infty} 
= \Vert D_\delta \Vert_{\mathcal{H}_q} \cdot \frac{1}{2^{n_l \delta}}, 
\]
and the latter tends to $0$ as $l \to \infty$, we finally have $\Vert M_D -K \Vert \geq \frac{1}{2} \Vert D \Vert_{\mathcal{H}_q}$.
\end{proof}

In the case of endomorphism, that is $p=q$, we give the following bounds for the essential norms.

\begin{theorem}\label{saja}
Let  $D\in \mathcal{H}_\infty$ and $M_D : \mathcal{H}_p \to \mathcal{H}_p$ the associated multiplication operator.
\begin{enumerate}
\item\label{saja1} If $1 < p \leq \infty$, then 
\[
\Vert M_D \Vert_{\ess} = \Vert M_D \Vert = \Vert D \Vert_{\mathcal{H}_\infty}.
\]

\item\label{saja2} If $p=1$, then
\[
\max\{\frac{1}{2}\Vert D \Vert_{\mathcal{H}_\infty} \; , \; \Vert D \Vert_{\mathcal{H}_1} \} \leq \Vert M_D \Vert_{\ess} \leq \Vert M_D \Vert = \Vert D \Vert_{\mathcal{H}_\infty}.
\]
\end{enumerate}
In particular, $M_D$ is compact if and only if $D=0$. The same equalities hold if we replace $\mathcal{H}$ by $\mathcal{H}^{(N)}$, with $N \in \mathbb{N}$.
\end{theorem}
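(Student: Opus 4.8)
The equality $\Vert M_D\Vert=\Vert D\Vert_{\mathcal{H}_\infty}$ and the trivial bound $\Vert M_D\Vert_{\ess}\le\Vert M_D\Vert$ are already available (the former being Theorem~\ref{descripcion}\,\ref{descr3}), so in both items only the indicated lower bounds for $\Vert M_D\Vert_{\ess}$ remain; I would treat the ranges $1<p<\infty$, $p=\infty$ and $p=1$ separately, writing $g=\mathcal{L}_{\mathbb{D}^\infty_2}(D)$ so that $\Vert g\Vert_{H_\infty(\mathbb{D}^\infty_2)}=\Vert D\Vert_{\mathcal{H}_\infty}$. The subtle point is the case $p=\infty$ of~\ref{saja1}: the straightforward ``test function plus compactness'' argument, or its symmetrized ``difference'' variant in the style of Theorem~\ref{chatruc}\,\ref{chatruc2}, only yields $\tfrac12\Vert D\Vert_{\mathcal{H}_\infty}$ --- which is precisely why item~\ref{saja2} keeps that factor --- and recovering the full norm forces me to work at the level of the essential spectrum rather than with bare test functions.

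For $1<p<\infty$ the space $\mathcal{H}_p$ is reflexive, so a compact operator carries weakly null sequences to norm null ones, and it suffices to build $H_p$-normalized, weakly null functions on which $M_D$ almost attains $\Vert D\Vert_{\mathcal{H}_\infty}$. Fixing $\varepsilon>0$, there are $N$ and $z^0\in\mathbb{D}^N$ with $|g_N(z^0)|>\Vert D\Vert_{\mathcal{H}_\infty}-\varepsilon$; hence the associated boundary function satisfies $\Vert G_N\Vert_{L_\infty(\mathbb{T}^N)}>\Vert D\Vert_{\mathcal{H}_\infty}-\varepsilon$, and one can pick $\omega^0\in\mathbb{T}^N$ with $g_N(\rho\omega^0)\to G_N(\omega^0)$ as $\rho\to1^-$ and $|G_N(\omega^0)|>\Vert D\Vert_{\mathcal{H}_\infty}-2\varepsilon$. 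Taking $z^{(n)}=\rho_n\omega^0$ with $\rho_n\uparrow1$ and the Cole--Gamelin extremals $f_{z^{(n)}}\in H_p(\mathbb{D}^N)\subseteq H_p(\mathbb{D}^\infty_2)$ from~\eqref{optima}, and applying~\eqref{eq: Cole-Gamelin} to $g\,f_{z^{(n)}}$ at the point $z^{(n)}$, the product weights cancel against $|f_{z^{(n)}}(z^{(n)})|=\bigl(\prod_{j\le N}(1-\rho_n^2)^{-1}\bigr)^{1/p}$ and leave
\[
\Vert M_Df_{z^{(n)}}\Vert_{\mathcal{H}_p}=\Vert g\,f_{z^{(n)}}\Vert_{H_p(\mathbb{D}^\infty_2)}\ \ge\ |g_N(z^{(n)})|\ \longrightarrow\ |G_N(\omega^0)|>\Vert D\Vert_{\mathcal{H}_\infty}-2\varepsilon .
\]
Each coordinate of $z^{(n)}$ has modulus $\rho_n\to1$, so the $f_{z^{(n)}}$ converge to $0$ uniformly on compact subsets of $\mathbb{D}^\infty_2$; being bounded in a reflexive space with continuous point evaluations, they are weakly null. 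Thus $\Vert M_D-K\Vert\ge\limsup_n\bigl(\Vert M_Df_{z^{(n)}}\Vert_{\mathcal{H}_p}-\Vert Kf_{z^{(n)}}\Vert_{\mathcal{H}_p}\bigr)\ge\Vert D\Vert_{\mathcal{H}_\infty}-2\varepsilon$ for every compact $K$, and $\varepsilon\downarrow0$ closes this case.

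For $p=\infty$ reflexivity is lost and Cole--Gamelin degenerates, so I would instead place $\lambda:=\Vert D\Vert_{\mathcal{H}_\infty}$ inside the essential spectrum of $M_D$ (assuming $D$ nonconstant, the constant case being immediate since $\Vert\mathrm{Id}\Vert_{\ess}=1$). By the three--lines theorem $\sigma\mapsto\sup_{\re s=\sigma}|D(s)|$ is nonincreasing on $(0,\infty)$, hence increases to $\lambda$ as $\sigma\to0^+$; so given $\varepsilon>0$ there is $s_0\in\mathbb{C}_0$ with $|D(s_0)|>\lambda-\varepsilon$, and we set $\mu=D(s_0)$. On a vertical strip inside $\mathbb{C}_0$ around $s_0$, $D$ is bounded, holomorphic and almost periodic in the imaginary direction; since $D-\mu$ vanishes at $s_0$, Rouch\'e applied to the almost periodic translates $D(\cdot+i\tau)-\mu$ over a small disc about $s_0$ produces a zero of $D-\mu$ near $s_0+i\tau$ for a relatively dense set of $\tau\in\mathbb{R}$, so $D^{-1}(\mu)\cap\mathbb{C}_0$ is infinite. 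For $w\in\mathbb{C}_0$ the evaluation $\mathrm{ev}_w$ is a norm one functional on $\mathcal{H}_\infty$ with $M_D^*\mathrm{ev}_w=D(w)\,\mathrm{ev}_w$, and evaluations at distinct $w$ are linearly independent (the map $w\mapsto(\mathfrak{p}_i^{-w})_i$ is injective, and evaluations at distinct points of $\mathbb{D}^\infty$ separate Dirichlet polynomials). Therefore $\ker(M_D^*-\mu)$ is infinite dimensional, so $M_D^*-\mu$ and hence $M_D-\mu$ are not Fredholm, i.e.\ $\mu\in\sigma_{\mathrm{ess}}(M_D)$; consequently $\Vert M_D\Vert_{\ess}\ge\sup\{|\zeta|:\zeta\in\sigma_{\mathrm{ess}}(M_D)\}\ge|\mu|>\lambda-\varepsilon$, and $\varepsilon\downarrow0$ gives $\Vert M_D\Vert_{\ess}=\lambda$.

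For $p=1$ two lower bounds are required. For $\Vert M_D\Vert_{\ess}\ge\Vert D\Vert_{\mathcal{H}_1}$ take $E_n=(2^n)^{-s}$: then $\Vert E_n\Vert_{\mathcal{H}_1}=1$, $\Vert D\cdot E_n\Vert_{\mathcal{H}_1}=\Vert D\Vert_{\mathcal{H}_1}$ (multiplication by $(2^n)^{-s}$ is isometric on $\mathcal{H}_1$), and $(E_n)$ is weakly null in $\mathcal{H}_1$ because its closed span is isometric to $H^1(\mathbb{T})$, where $z^n\to0$ weakly (as $(H^1)^*=\mathrm{BMOA}\subseteq H^2$); so $KE_n\to0$ for compact $K$ and $\Vert M_D-K\Vert\ge\limsup_n\Vert D\cdot E_n\Vert_{\mathcal{H}_1}=\Vert D\Vert_{\mathcal{H}_1}$. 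For $\Vert M_D\Vert_{\ess}\ge\tfrac12\Vert D\Vert_{\mathcal{H}_\infty}$ I would mimic the proof of Theorem~\ref{chatruc}\,\ref{chatruc2}: using~\eqref{optima} and~\eqref{eq: Cole-Gamelin} as above (now with $p=1$ and a fixed interior $z^0$) choose $f\in\mathcal{H}_1$ with $\Vert f\Vert_{\mathcal{H}_1}=1$ and $\Vert Df\Vert_{\mathcal{H}_1}>\Vert D\Vert_{\mathcal{H}_\infty}-\varepsilon$, and put $P^{(l)}=f\cdot(2^{k_l})^{-s}$ with $k_l\uparrow\infty$, so $\Vert P^{(l)}\Vert_{\mathcal{H}_1}=1$, $\Vert DP^{(l)}\Vert_{\mathcal{H}_1}=\Vert Df\Vert_{\mathcal{H}_1}$ and $\Vert (DP^{(l)})_\delta\Vert_{\mathcal{H}_1}\le2^{-k_l\delta}\Vert Df\Vert_{\mathcal{H}_1}\to0$ for each fixed $\delta>0$; then, passing to a subsequence along which $(KP^{(l)})$ is norm Cauchy, fixing a large $j$ and a small $\delta$ with $\Vert (DP^{(j)})_\delta\Vert_{\mathcal{H}_1}>\Vert D\Vert_{\mathcal{H}_\infty}-2\varepsilon$, and letting $l\to\infty$,
\[
\Vert M_D-K\Vert\ \ge\ \Bigl\Vert(M_D-K)\tfrac{P^{(j)}-P^{(l)}}{2}\Bigr\Vert_{\mathcal{H}_1}\ \ge\ \tfrac12\bigl(\Vert (DP^{(j)})_\delta\Vert_{\mathcal{H}_1}-\Vert (DP^{(l)})_\delta\Vert_{\mathcal{H}_1}\bigr)-\tfrac12\Vert K(P^{(j)}-P^{(l)})\Vert_{\mathcal{H}_1}\ \ge\ \tfrac12\Vert D\Vert_{\mathcal{H}_\infty}-2\varepsilon ,
\]
and $\varepsilon\downarrow0$ finishes. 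In every case $D=0$ whenever $M_D$ is compact, which gives the last assertion, and the $\mathcal{H}^{(N)}$ versions follow by the same arguments with $\mathbb{D}^N,\mathbb{T}^N$ in place of $\mathbb{D}^\infty_2,\mathbb{T}^\infty$. The main obstacle, to repeat, is the $p=\infty$ step: the elementary test-function methods stop at half the norm, so one is forced to exhibit an infinite-dimensional eigenspace of $M_D^*$ with eigenvalue close to $\Vert D\Vert_{\mathcal{H}_\infty}$ --- getting the almost-periodic Rouch\'e argument and the linear independence of the evaluation functionals exactly right is the delicate part.
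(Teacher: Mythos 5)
Your proposal is correct in all the main cases, but it follows the paper's route only for $1<p<\infty$ (where you reproduce the mechanism of Proposition~\ref{ubeda}: Cole--Gamelin extremals \eqref{optima}, weakly null by boundedness, pointwise decay and reflexivity, against which compact operators vanish). Elsewhere you genuinely diverge. For $p=\infty$ the paper does not argue at all: it simply invokes \cite[Corollary~2.4]{lefevre2009essential}; your self-contained alternative --- placing $\mu=D(s_0)$ with $|\mu|>\Vert D\Vert_{\mathcal H_\infty}-\varepsilon$ in the essential spectrum by producing infinitely many zeros of $D-\mu$ in $\mathbb{C}_0$ via almost periodicity and Rouch\'e, and noting that the corresponding evaluations are linearly independent eigenvectors of $M_D^*$ --- is sound (uniform convergence on every $\mathbb{C}_\varepsilon$ gives uniform almost periodicity on vertical strips; linear independence follows by testing against $2^{-ks}3^{-ls}$ and the injectivity of $w\mapsto(2^{-w},3^{-w})$; and $r_{\ess}\le\Vert\cdot\Vert_{\ess}$ closes the estimate). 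For $p=1$ both of your lower bounds also take different paths: you obtain $\Vert D\Vert_{\mathcal H_1}$ from the weakly null monomials $(2^n)^{-s}$ (correct: their closed span is isometric to $H^1(\mathbb{T})$, where $z^n\to0$ weakly), whereas the paper restricts $M_D$ and $K$ to $\mathcal H_q\to\mathcal H_1$ for $1<q<\infty$ and quotes Theorem~\ref{chatruc}\,\ref{chatruc1}; and you obtain $\tfrac12\Vert D\Vert_{\mathcal H_\infty}$ by transplanting the proof of Theorem~\ref{chatruc}\,\ref{chatruc2} to the pair $(f\cdot 2^{-k_j s},f\cdot2^{-k_l s})$, whereas the paper works on the polydisc with Fej\'er means and the remainder operators $R_n^N$ (Proposition~\ref{ubeda}). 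Your variants are correct and, for $p=\infty$, arguably more informative than a citation.

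The one concrete gap is the closing sentence that the $\mathcal H^{(N)}$ versions ``follow by the same arguments''. For $p=\infty$ and $N=1$ your eigenspace argument collapses: every $D\in\mathcal H_\infty^{(1)}$ is periodic with period $2\pi i/\log 2$, the translation numbers supplied by almost periodicity are precisely (near) multiples of this period, and all the zeros of $D-\mu$ so produced are sent by $w\mapsto 2^{-w}$ to the \emph{same} point of $\mathbb{D}$, so the associated evaluation functionals coincide instead of spanning an infinite-dimensional subspace of $\ker(M_D^*-\mu)$. For $N\ge2$ the injectivity of $w\mapsto(2^{-w},3^{-w})$ rescues the argument, but $N=1$ (i.e.\ $M_h$ on $H_\infty(\mathbb{D})$) requires a separate treatment, for instance using a hyperbolically separated sequence $z_n\to\partial\mathbb{D}$ with $|h(z_n)|\to\Vert h\Vert_\infty$, the separation $\Vert \mathrm{ev}_{z_n}-\mathrm{ev}_{z_m}\Vert\ge c>0$, and the compactness of $K^*$. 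You should either supply that case or restrict the addendum accordingly.
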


The previous theorem will be a consequence of the Proposition~\ref{ubeda} which we feel is independently interesting. For the proof we need the following technical lemma in the spirit of \cite[Proposition~2]{brown1984cyclic}. Is relates weak-star convergence and uniform convergence on compact sets for Hardy spaces of holomorphic functions. It is a sort of `holomorphic version´ of Lemma~\ref{bastia}.

\begin{lemma}\label{maciel}
Let $1\leq p <\infty$, $N\in \mathbb{N}\cup \{\infty\}$ and $(f_n) \subseteq H_p(\mathbb{D}^N_2)$ then the following statements are equivalent
\begin{enumerate}
\item\label{maciel1} $f_n \to 0$ in the weak-star topology,
\item\label{maciel2} $f_n(z) \to 0$ for each $z\in \mathbb{D}^N_2$ and $\Vert f_n \Vert_{H_p(\mathbb{D}^N_2)} \leq C$
\item\label{maciel3} $f_n \to 0$ uniformly on compact sets of $\mathbb{D}^N_2$ and $\Vert f_n \Vert_{H_p(\mathbb{D}^N_2)} \leq C$,
\end{enumerate}
\end{lemma}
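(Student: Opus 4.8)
The plan is to mirror the proof of Lemma~\ref{bastia} almost verbatim, replacing Dirichlet series by holomorphic functions on $\mathbb{D}^N_2$, evaluations $D\mapsto D(s)$ by evaluations $f\mapsto f(z)$, and the Cole-Gamelin inequality for Dirichlet series by its polydisk version \eqref{eq: Cole-Gamelin}. The key structural fact I will use is that $H_p(\mathbb{D}^N_2)$ is a dual space for every $1\le p<\infty$: for $1<p<\infty$ this is reflexivity, and for $p=1$ it was established in the proof of Proposition~\ref{basile} (for $N=\infty$, and the finite-dimensional case is classical / follows by the same Montel argument). I will also use that $H_p(\mathbb{D}^N_2)$ is separable (Dirichlet polynomials with rational coefficients, transported via the Bohr transform, or directly the monomials with rational coefficients) and that the evaluation functionals $\mathrm{ev}_z$, $z\in\mathbb{D}^N_2$, are weak-star continuous and separate points.

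The implication \ref{maciel1}$\Rightarrow$\ref{maciel2} is immediate: weak-star convergent sequences are norm-bounded (uniform boundedness), and each $\mathrm{ev}_z$ is weak-star continuous, so $f_n(z)\to 0$. For \ref{maciel2}$\Rightarrow$\ref{maciel3}, I argue by contradiction exactly as in Lemma~\ref{bastia}: if convergence were not uniform on some compact $K\subseteq\mathbb{D}^N_2$, pick $\varepsilon>0$ and a subsequence with $\sup_{z\in K}|f_{n_j}(z)|\ge\varepsilon$; since $(f_{n_j})$ is bounded in $H_p$, Montel's theorem (\cite[Theorem~15.50]{defant2018Dirichlet}, using that on a compact $K\subseteq\ell_2$ the tails $\sum_{j\ge j_0}|z_j|^2$ are uniformly small together with \eqref{eq: Cole-Gamelin} to get local boundedness) yields a further subsequence converging uniformly on compact sets to some holomorphic $f$; pointwise this limit must be $0$ by hypothesis \ref{maciel2}, so $f=0$; but then $\sup_{z\in K}|f_{n_j}(z)|\to 0$, a contradiction. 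The only care needed for $N=\infty$ is the passage through $\mathbb{D}^\infty_2$, where one first restricts attention to a compact set and uses the Cauchy estimates / Cole-Gamelin bound to control the functions there; this is the same device already used in Proposition~\ref{basile}.

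For \ref{maciel3}$\Rightarrow$\ref{maciel1}, I replay the compactness-plus-metrizability argument from Lemma~\ref{bastia}. Let $B$ be the closed unit ball of $H_p(\mathbb{D}^N_2)$; by Alaoglu, $(B,w^*)$ is compact, and since $H_p(\mathbb{D}^N_2)$ is separable, $(B,w^*)$ is metrizable (\cite[Theorem~5.1]{conway1990course}), so sequences suffice. On the other hand $(B,\tau_0)$, where $\tau_0$ is uniform convergence on compact sets, is Hausdorff, and the topology of pointwise convergence on $\mathbb{D}^N_2$ is weaker than both $w^*$ (because the $\mathrm{ev}_z$ are $w^*$-continuous) and $\tau_0$; since on the bounded set $B$ pointwise convergence actually upgrades to $\tau_0$-convergence by the contradiction argument of the previous paragraph (any $\tau_0$-cluster point of a pointwise-convergent bounded sequence is holomorphic and agrees pointwise with the limit), the identity $Id:(B,w^*)\to(B,\tau_0)$ is a continuous bijection from a compact space to a Hausdorff space, hence a homeomorphism. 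Thus if $f_n\to 0$ in $\tau_0$ with $\|f_n\|\le C$, rescaling into $B$ and applying this homeomorphism gives $f_n\to 0$ in $w^*$.

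The main obstacle is the $N=\infty$ case of the Montel step: one must be careful that ``uniformly on compact sets of $\mathbb{D}^\infty_2$'' behaves well, i.e. that a bounded sequence in $H_p(\mathbb{D}^\infty_2)$ is locally bounded for the $\tau_0$ topology and hence has $\tau_0$-convergent subsequences. This is exactly handled by combining \eqref{eq: Cole-Gamelin} with the uniform smallness of $\ell_2$-tails on compacta (\cite[Page~6]{diestel2012sequences}), precisely as in the proof of Proposition~\ref{basile}; once that is granted, the rest is a routine transcription of Lemma~\ref{bastia}. One should also note that for $p=\infty$ the statement is not claimed (the hypothesis is $1\le p<\infty$), consistent with the fact that $H_\infty$ is a dual space but the relevant separability/metrizability of its unit ball fails, so no adjustment is needed there.
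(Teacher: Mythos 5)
Your proposal is correct and follows essentially the same route as the paper: (a)$\Rightarrow$(b) and (c)$\Rightarrow$(a) by transcribing the arguments of Lemma~\ref{bastia} (duality of $H_p(\mathbb{D}^N_2)$ via reflexivity or Proposition~\ref{basile}, Alaoglu plus separability/metrizability, and the compact-to-Hausdorff homeomorphism), and (b)$\Rightarrow$(c) by contradiction via a Montel-type compactness argument. The only cosmetic difference is that the paper invokes the Montel theorem for $H_p(\mathbb{D}^N_2)$ from \cite{vidal2020montel}, whereas you use the general Montel theorem together with the Cole--Gamelin bound for local boundedness; both suffice since the limit function need only be identified as $0$ pointwise.
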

\begin{proof}
\ref{maciel1} $\Rightarrow$~\ref{maciel2} and~\ref{maciel3} $\Rightarrow$~\ref{maciel1} are proved with the same arguments used in Lemma~\ref{bastia}. 
Let us see~\ref{maciel2} $\Rightarrow$~\ref{maciel3}. Suppose not, then there exists $\varepsilon>0$, a subsequence $f_{n_j}$ and a compact set  $K \subseteq \mathbb{D}_{2}^{\infty}$ such that $\Vert f_{n_j}\Vert_{H_{\infty}(K)} \geq \varepsilon$. Since $f_{n_j}$ is bounded, by Montel's theorem for $H_p(\mathbb{D}^N_2)$ (see \cite[Theorem~2]{vidal2020montel}), we can take a subsequence $f_{n_{j_l}}$ and $f\in H_p(\mathbb{D}^N_2)$ such that $f_{n_{j_l}} \to f$ uniformly on compact sets. But since it tends pointwise to zero, then $f=0$ which is a contradiction.
\end{proof}

\begin{proposition}\label{ubeda}
\;
Let $1\leq p < \infty$, $f\in H_{\infty}(\mathbb{D}^\infty_2)$ and $M_f : H_p(\mathbb{D}^\infty_2) \to H_p(\mathbb{D}^\infty_2)$ the multiplication operator. If $p>1$ then
\[
\Vert M_f \Vert_{\ess} = \Vert M_f \Vert = \Vert f \Vert_{H_{\infty}(\mathbb{D}^\infty_2)}.
\]
If $p=1$ then
\[
\Vert M_f\Vert \geq \Vert M_f \Vert_{\ess} \geq \frac{1}{2} \Vert M_f \Vert.
\]
In particular $M_f :  H_p(\mathbb{D}^\infty_2) \to  H_p(\mathbb{D}^\infty_2)$ is compact if and only if $f=0$. The same equalities hold if we replace $\mathbb{D}^\infty_2$ by $\mathbb{D}^N$, with $N \in \mathbb{N}$.
\end{proposition}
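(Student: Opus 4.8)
The plan is to reduce the statement to a lower bound on $\Vert M_f\Vert_{\ess}$ and to prove that bound by testing $M_f$ against the Cole--Gamelin extremal functions from \eqref{optima}, which here play the role of normalized reproducing kernels. The norm identity $\Vert M_f\Vert=\Vert f\Vert_{H_\infty(\mathbb D^\infty_2)}$ is Theorem~\ref{descripcion}\ref{descr3} read through the Bohr transform (Proposition~\ref{charite}), and $\Vert M_f\Vert_{\ess}\le\Vert M_f\Vert$ holds trivially (take $K=0$). Thus in both parts it only remains to bound $\Vert M_f\Vert_{\ess}$ from below: by $\Vert f\Vert_{H_\infty(\mathbb D^\infty_2)}$ when $p>1$ and by $\tfrac12\Vert f\Vert_{H_\infty(\mathbb D^\infty_2)}$ when $p=1$. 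The statement over $\mathbb D^N$ with $N$ finite is obtained by the same argument with $N$ kept fixed, so I describe only the case $\mathbb D^\infty_2$.

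\emph{Construction of the test sequence.} Fix $\varepsilon>0$. Since $\sup_N\Vert f_N\Vert_{H_\infty(\mathbb D^N)}=\Vert f\Vert_{H_\infty(\mathbb D^\infty_2)}$, choose $N$ with $\Vert f_N\Vert_{H_\infty(\mathbb D^N)}>\Vert f\Vert_{H_\infty(\mathbb D^\infty_2)}-\varepsilon$. Using $\Vert f_N\Vert_{H_\infty(\mathbb D^N)}=\lim_{r\to1^-}\sup_{z\in r\mathbb T^N}\vert f(z)\vert$ (as in the proof of Theorem~\ref{descripcion}\ref{descr4}), pick points $u_n\in\mathbb D^N$, all of whose coordinates tend to $\partial\mathbb D$, with $\vert f_N(u_n)\vert\to\Vert f_N\Vert_{H_\infty(\mathbb D^N)}$. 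Let $g_n:=f_{u_n}\in H_p(\mathbb D^N)\subseteq H_p(\mathbb D^\infty_2)$ be the extremal function associated with $u_n$, so $\Vert g_n\Vert_{H_p}=1$ and $\vert g_n(u_n)\vert=\big(\prod_{j=1}^N(1-\vert u_n^j\vert^2)^{-1}\big)^{1/p}$. Plugging $z=(u_n,0,0,\dots)$ into the Cole--Gamelin inequality \eqref{eq: Cole-Gamelin} for $f\cdot g_n\in H_p(\mathbb D^\infty_2)$ and cancelling the common product factor yields the one estimate on which everything rests:
\[
\Vert f\cdot g_n\Vert_{H_p(\mathbb D^\infty_2)}\ \ge\ \vert f_N(u_n)\vert\qquad\text{for all }n.
\]
Moreover, since every coordinate of $u_n$ tends to $\partial\mathbb D$ we have $g_n\to0$ pointwise on $\mathbb D^\infty_2$, whence by Lemma~\ref{maciel} both $g_n\to0$ weak-star and $g_n\to0$ uniformly on compact subsets of $\mathbb D^\infty_2$; applying Lemma~\ref{maciel} to the bounded sequence $(f\cdot g_n)$ likewise gives $f\cdot g_n\to0$ uniformly on compact sets.

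\emph{Case $p>1$.} Here $H_p(\mathbb D^\infty_2)$ is reflexive, so $g_n\to0$ weakly and hence $\Vert Kg_n\Vert_{H_p}\to0$ for every compact operator $K$. Consequently, for each compact $K$, using $\Vert g_n\Vert_{H_p}=1$ and $\Vert Kg_n\Vert_{H_p}\to0$,
\[
\Vert M_f-K\Vert\ \ge\ \limsup_n\Vert f\cdot g_n\Vert_{H_p}\ \ge\ \Vert f_N\Vert_{H_\infty(\mathbb D^N)}\ >\ \Vert f\Vert_{H_\infty(\mathbb D^\infty_2)}-\varepsilon .
\]
Taking the infimum over $K$ and then $\varepsilon\to0$ yields $\Vert M_f\Vert_{\ess}\ge\Vert f\Vert_{H_\infty(\mathbb D^\infty_2)}$, which with the trivial reverse inequality gives $\Vert M_f\Vert_{\ess}=\Vert M_f\Vert=\Vert f\Vert_{H_\infty(\mathbb D^\infty_2)}$.

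\emph{Case $p=1$, and the main obstacle.} This is the delicate step, and it is where the constant $\tfrac12$ appears: $H_1(\mathbb D^\infty_2)$ is not reflexive, so $g_n\to0$ only weak-star (it is a dual space by Proposition~\ref{basile}), and a compact $K$ sends the bounded sequence $(g_n)$ merely to a sequence with a norm-Cauchy subsequence, not to a norm-null one. I would get around this with a difference trick in the spirit of Demazeux. Given a compact $K$, pass to a subsequence so that $(Kg_{n_k})_k$ is Cauchy. Since $\big\Vert\tfrac12(g_{n_k}-g_{n_l})\big\Vert_{H_1}\le1$ for $k\ne l$,
\[
\Vert M_f-K\Vert\ \ge\ \tfrac12\Vert f\cdot(g_{n_k}-g_{n_l})\Vert_{H_1}-\tfrac12\Vert K(g_{n_k}-g_{n_l})\Vert_{H_1}.
\]
Fix $k$ and let $l\to\infty$: since $f\cdot g_{n_l}\to0$ uniformly on compacts, $f\cdot g_{n_k}-f\cdot g_{n_l}\to f\cdot g_{n_k}$ uniformly on compacts, and by lower semicontinuity of $\Vert\cdot\Vert_{H_1(\mathbb D^\infty_2)}$ under such convergence (immediate from its definition and Fatou's lemma, or from weak-star lower semicontinuity via Proposition~\ref{basile}) one gets $\liminf_l\Vert f\cdot(g_{n_k}-g_{n_l})\Vert_{H_1}\ge\Vert f\cdot g_{n_k}\Vert_{H_1}\ge\vert f_N(u_{n_k})\vert$, while $\Vert K(g_{n_k}-g_{n_l})\Vert_{H_1}$ is eventually as small as we wish because $(Kg_{n_k})_k$ is Cauchy. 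Letting successively $l\to\infty$, then $k\to\infty$, then taking the infimum over $K$ and $\varepsilon\to0$, we arrive at $\Vert M_f\Vert_{\ess}\ge\tfrac12\Vert f\Vert_{H_\infty(\mathbb D^\infty_2)}=\tfrac12\Vert M_f\Vert$. In every case $\Vert f\Vert_{H_\infty(\mathbb D^\infty_2)}\le2\Vert M_f\Vert_{\ess}$, so $M_f$ is compact if and only if $f=0$, and the same argument with $N$ held fixed throughout gives the $\mathbb D^N$ statements. The loss of a factor $2$ in the case $p=1$ is exactly the price paid for the failure of reflexivity, in line with Demazeux's one-variable results.
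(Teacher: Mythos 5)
Your proposal is correct, and for the case $p=1$ it takes a genuinely different route from the paper. For $p>1$ both arguments are essentially identical: test $M_f-K$ against the normalized reproducing kernels $f_{u_n}$ from \eqref{optima}, use the Cole--Gamelin inequality at $(u_n,0,\dots)$ to get $\Vert f\cdot f_{u_n}\Vert_{H_p}\geq\vert f_N(u_n)\vert$, and kill $K f_{u_n}$ via Lemma~\ref{maciel} plus reflexivity. The difference is in how the non-reflexivity of $H_1$ is handled. The paper follows Demazeux's Fej\'er-kernel scheme: it writes $\Vert M_f-K\Vert\geq\tfrac12\Vert R_n^N\circ(M_f-K)\Vert$ with $R_n^N=I-\sigma_n^N$ (so $\Vert R_n^N\Vert\leq 2$, which is where the $\tfrac12$ enters), shows $\Vert R_n^N\circ K\Vert\to0$ by compactness, and then carries out an explicit Taylor-coefficient computation to prove $\Vert\sigma_n^N\circ M_f(h_z)\Vert_{H_1}\to0$ as $\Vert z\Vert_\infty\to1$. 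You instead extract a subsequence along which $(Kg_{n_k})_k$ is norm-Cauchy, test against the normalized differences $\tfrac12(g_{n_k}-g_{n_l})$ (which is where your $\tfrac12$ enters), and recover $\Vert f\cdot g_{n_k}\Vert_{H_1}$ in the limit $l\to\infty$ by lower semicontinuity of the $H_1$ norm under uniform convergence on compacta --- exactly the device the paper itself uses in the proof of Theorem~\ref{chatruc}\ref{chatruc2}, transplanted to the endomorphism setting. Your version avoids the Fej\'er computation entirely and also dispenses with the paper's final reduction from $\mathbb{D}^\infty_2$ to $\mathbb{D}^N$ via the operators $\mathcal{I}_N,\mathcal{J}_N$, since you work in $H_p(\mathbb{D}^\infty_2)$ from the start with test functions depending on finitely many variables; the paper's approach, in exchange, isolates the quantitative statement $\Vert R_n^N\circ M_f\Vert\geq\Vert f\Vert_{H_\infty}$, which has some independent interest. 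One small remark: you do not need all coordinates of $u_n$ to approach $\partial\mathbb{D}$ --- $\Vert u_n\Vert_\infty\to1$ already forces $\prod_{j=1}^N(1-\vert u_n^j\vert^2)\to0$ and hence $g_n\to0$ pointwise --- though the stronger choice is harmless and attainable by a small perturbation.
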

\begin{proof}
The inequality $\Vert M_f \Vert_{\ess}  \leq \Vert M_f \Vert = \Vert f\Vert_{H_{\infty}(\mathbb{D}^N_2)}$ is already known for every  $N\in \mathbb{N}\cup\{\infty\}$. It is only left, then, to see that
\begin{equation} \label{cilindro}
\Vert M_f \Vert \leq \Vert M_f \Vert_{\ess} \,.
\end{equation}
We begin with the case $N \in \mathbb{N}$. Assume in first place that $p>1$, and take a sequence  $(z^{(n)})_n \subseteq \mathbb{D}^N$, with $\Vert z^{(n)} \Vert_\infty \to 1$, such that $\vert f(z^{(n)}) \vert \to \Vert f \Vert_{H_{\infty}(\mathbb{D}^N)}$. Consider now the function given by
\[
h_{z^{(n)}}(u) = \left( \prod\limits_{j=1}^N \frac{1- \vert z^{(n)}_j\vert^2}{(1- \overline{z^{(n)}_j}u_j)^2}\right)^{1/p},
\] 
for $u \in \mathbb{D}^{N}$. Now, by the Cole-Gamelin inequality \eqref{eq: Cole-Gamelin}
\[
\vert f(z^{(n)})\vert = \vert f(z^{(n)}) \cdot h_{z^{(n)}}(z^{(n)}) \vert \left( \prod\limits_{j=1}^N \frac{1}{1-\vert z^{(n)}_j \vert^2} \right)^{-1/p} \leq \Vert f \cdot h_{z^{(n)}} \Vert_{H_p(\mathbb{D}_2^N)} \leq \Vert f \Vert_{H_\infty(\mathbb{D}^N_2)},
\]
and then $\Vert f \cdot h_{z^{(n)}} \Vert_{H_p(\mathbb{D}^N_2)} \to \Vert f \Vert_{H_{\infty}(\mathbb{D}^N_2)}$. \\
Observe that $\Vert h_{z^{(n)}} \Vert_{H_p(\mathbb{D}^N)} =1$ and that $ h_{z^{(n)}}(u) \to 0$ as $n\to \infty$ for every $u\in \mathbb{D}^N$. Then Lemma~\ref{maciel} $h_{z^{(n)}}$ tends to zero in the weak-star topology and then, since  $H_p(\mathbb{D}^N_2)$ is reflexive (recall that $1<p<\infty$), also in the weak topology. So, if $K$ is a compact operator on $H_p(\mathbb{D}^N_2)$ then $K(h_{z^{(n)}}) \to 0$ and therefore
\begin{multline*}
\Vert M_f - K \Vert  \geq \limsup\limits_{n \to \infty} \Vert f\cdot h_{z^{(n)}} - K(h_{z^{(n)}}) \Vert_{H_p(\mathbb{D}^N_2)} \\
 \geq \limsup\limits_{n\to \infty} \Vert f\cdot h_{z^{(n)}} \Vert_{H_p(\mathbb{D}^N_2)} -\Vert K(h_{z^{(n)}}) \Vert_{H_p(\mathbb{D}^N_2)} =\Vert f\Vert_{H_{\infty}(\mathbb{D}^N_2)}.
\end{multline*}
Thus, $\Vert M_f - K\Vert \geq \Vert f \Vert_{H_{\infty}(\mathbb{D}^N_2)}$ for each compact operator $K$ and hence $\Vert M_f \Vert_{\ess} \geq \Vert M_f\Vert$ as we wanted to see.\\
The proof of  the case $p=1$  follows some ideas of Demazeux in \cite[Theorem~2.2]{demazeux2011essential}. First of all, recall that the $N$-dimensional F\'ejer's Kernel is defined as
\[
K_n^N (u)=\sum\limits_{\vert \alpha_1\vert, \cdots \vert \alpha_N\vert \leq N} \prod\limits_{j=1}^{N} \left(1-\frac{\vert \alpha_j\vert}{n+1}\right) u^{\alpha}\,,
\]
for $u \in \mathbb{D}^N_2$. With this, the $n$-th F\'ejer polynomial with $N$ variables of a function $g\in H_p(\mathbb{D}^N_2)$ is obtained by convoluting $g$ with the $N-$dimensional F\'ejer's Kernel, in other words
\begin{equation} \label{fejerpol}
\sigma_n^N g (u) = \frac{1}{(n+1)^N} \sum\limits_{l_1,\cdots, l_N=1}^{n} \sum\limits_{\vert\alpha_j\vert\leq l_j} \hat{g}(\alpha) u^{\alpha}.
\end{equation}
It is well known (see e.g. \cite[Lemmas~5.21 and~5.23]{defant2018Dirichlet}) that $\sigma_n^N : H_1(\mathbb{D}^N_2) \to H_1(\mathbb{D}^N_2)$ is a contraction and $\sigma_n^N g \to g$ on $H_1(\mathbb{D}^N_2)$ when $n\to \infty$ for all $g\in H_1(\mathbb{D}^N_2)$. Let us see how $R_n^N = I - \sigma_n^N$, gives a first lower bound for the essential norm.\\
Let $K: H_1(\mathbb{D}^N_2) \to H_1(\mathbb{D}^N_2)$ be a compact operator, since $\Vert \sigma_n^N \Vert \leq 1$ then $\Vert R_n^N \Vert \leq 2$ and hence
\[
\Vert M_f - K \Vert \geq \frac{1}{2} \Vert R_n^N \circ (M_f -K) \Vert \geq \frac{1}{2} \Vert R_n^N \circ M_f \Vert - \frac{1}{2} \Vert R_n^N \circ K \Vert.
\]
On the other side, since $R_n^N \to 0$ pointwise,  $R_n^N$ tends to zero uniformly on compact sets of $H_1(\mathbb{D}^N)$. In particular on the compact set $\overline{K(B_{H_1(\mathbb{D}^N)})}$, and therefore $\Vert R_n^N \circ K \Vert \to 0$. We conclude then that $\Vert M_f \Vert_{\ess} \geq \frac{1}{2} \limsup\limits_{n\to\infty} \Vert R_n^N\circ M_f \Vert$.\\
Our aim now is to obtain a lower bound for the right-hand-side of the inequality. To get this, we are going to see that 
\begin{equation} \label{agus}
\Vert \sigma^N_n \circ M_f(h_z) \Vert_{H_1(\mathbb{D}^N)} \to 0 \; \text{when} \; \Vert z \Vert_\infty \to 1,
\end{equation}
where $h_z$ is again defined, for each fixed $z \in \mathbb{D}^{N}$, by 
\[
h_z(u) = \prod\limits_{j=1}^N \frac{1- \vert z_j\vert^2}{(1- \overline{z}_ju_j)^2}.
\]
To see this, let us consider first, for each $z \in \mathbb{D}^{N}$, the function $g_z (u) = \prod\limits_{j=1}^N \frac{1}{(1-\bar{z_j} u_{j})^{2}}$.
This is clearly holomorphic and, hence, has a development a as Taylor series
\[
g_{z}(u) = \sum_{\alpha \in \mathbb{N}_{0}^{N}} c_{\alpha}(g_{z}) u^{\alpha}
\]
for $u \in \mathbb{D}^{N}$. Our first step is to see that the Taylor coefficients up to a fixed degree are bounded uniformly on $z$. Recall that $c_{\alpha}(g_{z})  = \frac{1}{\alpha !} \frac{\partial^{\alpha} g(0)}{\partial u^{\alpha}}$ and, since 
\[
\frac{\partial^{\alpha}g_z(u)}{\partial u^{\alpha}} = \prod\limits_{j=1}^{N} \frac{(\alpha_j + 1)!}{(1- \overline{z_j}u_j)^{2+\alpha_j}} (\overline{z_j})^{\alpha_j},
\]
we have 
\[
c_{\alpha}(g_{z})  
= \frac{1}{\alpha !}\frac{\partial^{\alpha}g_z(0)}{\partial u^{\alpha}} 
=  \frac{1}{\alpha !} \prod\limits_{j=1}^{N} (\alpha_j + 1)!(\overline{z_j})^{\alpha_j}
= \left( \prod\limits_{j=1}^{N} (\alpha_j + 1) \right) \overline{z}^{\alpha} \,.
\]
Thus $\vert c_{\alpha} (g_{z}) \vert \leq (M+1)^{N}$ whenever  $\vert \alpha \vert \leq M$. \\
On the other hand, for each $\alpha \in \mathbb{N}_{0}^{N}$ (note that $h_{z}(u) = g_{z}(u) \prod_{j=1}^{N} (1- \vert z_{j}\vert)$ for every $u$) we have
\[
c_{\alpha} (f\cdot h_z) = \left( \prod\limits_{j=1}^N (1- \vert z_j \vert^2) \right) \sum\limits_{\beta + \gamma =\alpha} \hat{f}(\beta) \hat{g}_z(\gamma) \,.
\]
Taking all these into account we finally have (recall \eqref{fejerpol}), for each fixed $n \in \mathbb{N}$
\begin{align*}
\Vert \sigma_n^N & \circ M_f (h_z) \Vert_{H_1(\mathbb{D}^N)}  \\
& \leq \left( \prod\limits_{j=1}^N 1- \vert z_j \vert^2 \right) \frac{1}{(n+1)^N} \sum\limits_{l_1,\cdots, l_N=1}^{N} \sum\limits_{\vert\alpha_j\vert\leq l_j} \vert \sum\limits_{\beta + \gamma =\alpha} \hat{f}(\beta) \hat{g}_z(\gamma) \vert \Vert u^{\alpha}\Vert_{H_1(\mathbb{D}^N)} \\
&\leq \left( \prod\limits_{j=1}^N 1- \vert z_j \vert^2 \right) \frac{1}{(n+1)^N} \sum\limits_{l_1,\cdots, l_N=1}^{N}\sum\limits_{\vert\alpha_j\vert\leq l_j} \sum\limits_{\beta + \gamma =\alpha} \Vert f \Vert_{H_{\infty}(\mathbb{D}^N)} (N+1)^{N} \,,
\end{align*}
which immediately yields \eqref{agus}. Once we have this we can easily conclude the argument. For each $n\in \mathbb{N}$ we have
\begin{multline*}
\Vert R_n^N \circ M_f \Vert = \Vert M_f - \sigma_n^N \circ M_f \Vert 
\geq \Vert M_f (h_z) - \sigma_n^N \circ M_f (h_z) \Vert_{H_1(\mathbb{D}^N)} \\
\geq \Vert M_f (h_z) \Vert_{H_1(\mathbb{D}^N_2)} - \Vert \sigma_n^N \circ M_f (h_z) \Vert_{H_1(\mathbb{D}^N)},
\end{multline*}
and since the last term tends to zero if $\Vert z\Vert_{\infty} \to 1$, then 
\[
\Vert R_n^N \circ M_f \Vert \geq \limsup\limits_{\Vert z\Vert \to 1} \Vert M_f (h_{z})\Vert_{H_1(\mathbb{D}^N)} \geq \Vert f\Vert_{H_{\infty}(\mathbb{D}^N)} \,,
\]
which finally gives
\[
\Vert M_f \Vert_{\ess} \geq \frac{1}{2} \Vert f\Vert_{H_{\infty}(\mathbb{D}^N_2)} = \frac{1}{2} \Vert M_f \Vert\,,
\]
as we wanted.\\
To complete the proof we consider the case $N=\infty$. So, what we have to see is that
\begin{equation} \label{farola}
\Vert M_f \Vert \geq \Vert M_f \Vert_{\ess} \geq C \Vert M_f \Vert \,,
\end{equation}
where $C=1$ if $p>1$ and $C=1/2$ if $p=1$. Let $K: H_p(\mathbb{D}^\infty_2) \to H_p(\mathbb{D}^\infty_2)$ be a compact operator, and consider for each $N \in \mathbb{N}$ the continuous operators $\mathcal{I}_N : H_p (\mathbb{D}^N) \to H_p(\mathbb{D}^\infty_2)$ given by the inclusion and $\mathcal{J}_N : H_p(\mathbb{D}^\infty_2) \to H_p ( \mathbb{D}^N)$ defined by $\mathcal{J}(g)(u)= g(u_1,\cdots, u_N, 0) = g_N(u)$ then $K_N =\mathcal{J}_{N} \circ K \circ \mathcal{I}_{N}: H_p(\mathbb{D}^N) \to H_p(\mathbb{D}^N)$ is compact. On the other side we have that $\mathcal{J}_N \circ M_f \circ \mathcal{I}_{N} (g) = f_n\cdot g = M_{f_N} (g)$ for every $g$, furthermore given any operator $T:H_p(\mathbb{D}^\infty_2) \to H_p(\mathbb{D}^\infty_2)$ and defining $T_N$ as before we have that
\begin{align*}
\Vert T \Vert =\sup\limits_{ \Vert g\Vert_{H_p(\mathbb{D}^\infty_2)}\leq 1} \Vert T(g) \Vert_{H_p(\mathbb{D}^\infty_2)} 
& \geq  \sup\limits_{ \Vert g\Vert_{H_p(\mathbb{D}^N)}\leq 1} \Vert T(g) \Vert_{H_p(\mathbb{D}^\infty_2)} \\
& \geq \sup\limits_{ \Vert g\Vert_{H_p(\mathbb{D}^N)}\leq 1} \Vert T_M(g) \Vert_{H_p(\mathbb{D}^N_2)} =\Vert T_N \Vert,
\end{align*}
and therefore
\[
\Vert M_f - K \Vert \geq \Vert M_{f_N} -K_N \Vert 
\geq \Vert M_{f_N} \Vert_{\ess} \geq C \Vert f_N \Vert_{H_{\infty}(\mathbb{D}^N_2)}\,.
\]
Since $\Vert f_{N} \Vert_{H_{\infty}(\mathbb{D}^N_2)} \to \Vert f \Vert_{H_{\infty}(\mathbb{D}^\infty_2)}$ when $N \to \infty$ we have \eqref{farola}, and this completes the proof.
\end{proof}

\noindent We can now prove Theorem~\ref{saja}.

\begin{proof}[Proof of Theorem~\ref{saja}]
Since for every $1\leq p < \infty$ the Bohr lift $\mathcal{L}_{\mathbb{D}^N_2} : \mathcal{H}_p^{(N)} \to H_p(\mathbb{D}^N_2)$ and the Bohr transform $\mathcal{B}_{\mathbb{D}^N_2} : H_p(\mathbb{D}^N_2) \to \mathcal{H}_p^{(N)}$ are isometries, then an operator $K : \mathcal{H}_p^{(N)} \to \mathcal{H}_p^{(N)}$ is compact if and only if $K_h = \mathcal{L}_{\mathbb{D}^N_2} \circ K \circ \mathcal{B}_{\mathbb{D}^N_2} : H_p(\mathbb{D}^N_2) \to H_p(\mathbb{D}^N_2)$ is a compact operator.
On the other side $f= \mathcal{L}_{\mathbb{D}^N_2}(D)$ hence $M_f = \mathcal{L}_{\mathbb{D}^N_2} \circ M_D \circ \mathcal{B}_{\mathbb{D}^N_2}$ and therefore
\[
\Vert M_D - K \Vert = \Vert \mathcal{L}_{\mathbb{D}^N_2}^{-1} \circ ( M_f - K_h ) \circ \mathcal{L}_{\mathbb{D}^N_2} \Vert = \Vert M_f - K_h \Vert \geq C \Vert f \Vert_{H_\infty(\mathbb{D}^N_2)} = C \Vert D \Vert_{\mathcal{H}_\infty^{(N)}},
\]
where $C=1$ if $p>1$ and $C= 1/2$ if $p=1$.
Since this holds for every compact operator $K$ then we have the inequality that we wanted. The upper bound is clear by the definition of essential norm.

On the other hand, if $p=1$ and $N \in \mathbb{N} \cup\{\infty\}$. Let $1 < q < \infty$ an consider $M_D^q : \mathcal{H}_q^{(N)} \to \mathcal{H}_1^{(N)}$ the restriction. If $K: \mathcal{H}_1^{(N)} \to \mathcal{H}_1^{(N)}$ is compact then its restriction $K^q : \mathcal{H}_q^{(N)} \to \mathcal{H}_1^{(N)}$ is also compact and then 
\begin{align*}
\Vert M_D - K \Vert_{\mathcal{H}_1^{(N)} \to \mathcal{H}_1^{(N)}} &= \sup\limits_{\Vert E \Vert_{\mathcal{H}_1^{(N)}} \leq 1} \Vert M_D(E) - K(E) \Vert_{\mathcal{H}_1^{(N)}} \\
&\geq \sup\limits_{\Vert E \Vert_{\mathcal{H}_q^{(N)} \leq 1}} \Vert M_D(E) - K(E) \Vert_{\mathcal{H}_1^{(N)}} \\
&= \Vert M_D^q - K^q \Vert_{\mathcal{H}_q^{(N)} \to \mathcal{H}_1^{(N)}} \geq \Vert M_D^q \Vert_{\ess} \geq \Vert D \Vert_{\mathcal{H}_1^{(N)}}.
\end{align*}
Finally, the case $p=\infty$ was proved in  \cite[Corollary~2,4]{lefevre2009essential}.
\end{proof}

\section{Spectrum of Multiplication operators}

In this section, we provide a characterization of the spectrum of the multiplication operator $M_D$, with respect to the image of its associated Dirichlet series in some specific half-planes. Let us first recall some definitions of the spectrum of an operator. We say that $\lambda$ belongs to the spectrum of $M_D$, that we note $\sigma(M_D)$, if the operator $M_D - \lambda I : \mathcal{H}_p \to \mathcal{H}_p$ is not invertible. Now, a number $\lambda$ can be in the spectrum for different reasons and according to these we can group them into the following subsets:
\begin{itemize}
\item If $M_D - \lambda I$ is not injective then $\lambda \in \sigma_p(M_D)$, the point spectrum.
\item If $M_D-\lambda I$ is injective and the $Ran(A-\lambda I)$ is dense (but not closed) in $\mathcal{H}_p$ then $\lambda \in \sigma_c(M_D)$, the continuous spectrum of $M_D$.
\item If $M_D-\lambda I$ is injective and its range has codimension greater than or equal to 1 then $\lambda$ belongs to $\sigma_r(M_D)$, the radial spectrum.
\end{itemize}

We are also interested in the approximate spectrum, noted by $\sigma_{ap}(M_D)$, given by those values $\lambda \in \sigma(M_D)$ for which there exist a unit sequence $(E_n)_n \subseteq \mathcal{H}_p$ such that $\Vert M_D(E_n) - \lambda E_n \Vert_{\mathcal{H}_p} \to 0$.

Vukoti\'c, in \cite[Theorem~7]{vukotic2003analytic}, proved that the spectrum of a Multiplication operator, induced by function $f$ in the one dimensional disk, coincides with $\overline{f(\mathbb{D})}$. In the case of the continuous spectrum, the description is given from the outer functions in $H_\infty(\mathbb{D})$. 
The notion of outer function can be extended to higher dimensions. If $N\in \mathbb{N}\cup\{\infty\}$, a function $f\in H_p(\mathbb{D}^N_2)$ is said to be outer if it satisfies
\[
\log\vert f(0) \vert = \int\limits_{\mathbb{T}^N} \log\vert F(\omega)\vert \mathrm{d}\omega,
\]
with $f\sim F$. A closed subspace $S$ of $H_p(\mathbb{D}^N_2)$ is said to be invariant, if for every $g\in S$ it is verified that $z_i \cdot g \in S$ for every monomial. Finally, a function $f$ is said to be cyclic, if the invariant subspace generated by $f$ is exactly $H_p(\mathbb{D}^N_2)$. The mentioned characterization comes from the generalized Beurling's Theorem, which affirms that $f$ is a cyclic vector if and only if $f$ is an outer function. In several variables, there exist outer functions which fail to be cyclic (see   \cite[Theorem~4.4.8]{rudin1969function}).
We give now the aforementioned characterization of the spectrum of a multiplication operator.

\begin{theorem} \label{espectro}
Given $1\leq p <\infty$ and  $D\in \mathcal{H}_{\infty}$ a non-zero Dirichlet series with associated multiplication operator $M_D : \mathcal{H}_p \to \mathcal{H}_p$. Then
\begin{enumerate}
\item \label{espectro1}  $M_D$ is onto if and only if there is some $c>0$ such that  $\vert D (s) \vert \geq c$ for every $s \in \mathbb{C}_{0}$.
\item \label{espectro2}  $\sigma(M_D)=\overline{D(\mathbb{C}_0)}$.
\item \label{espectro3} If $D$ is not constant then $\sigma_c(M_D) \subseteq \overline{D(\mathbb{C}_0)} \setminus D(\mathbb{C}_{1/2})$. Even more, if $\lambda \in \sigma_c(M_D)$ then $f - \lambda = \mathcal{L}_{\mathbb{D}^\infty_2}(D) - \lambda$ is an outer function in $H_{\infty}(\mathbb{D}^\infty_2)$.
\end{enumerate}
The same holds if we replace in each case $\mathcal{H}$ by $\mathcal{H}^{(N)}$ (with $N \in \mathbb{N}$).
\end{theorem}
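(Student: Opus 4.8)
The plan is to reduce the whole statement to the invertibility of $M_{D-\lambda}=M_D-\lambda I$ together with a ``half-plane'' version of Beurling's theorem.

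\emph{Parts \ref{espectro1}--\ref{espectro2}.} I would first record that $M_E\colon\mathcal H_p\to\mathcal H_p$ is injective for every non-zero $E\in\mathcal H_\infty$: if $EG=0$ in $\mathcal H_p$, then the holomorphic function $EG$ vanishes on the connected set $\mathbb C_{1/2}$ while $E\not\equiv0$ there, so $G=0$. Hence $M_D$ is onto iff it is bijective iff, by the Open Mapping Theorem, it is invertible. If $M_D$ is invertible, then $M_D^{-1}$ commutes with every $M_{\mathfrak{p}_i^{-s}}$, so by the characterization of multiplication operators proved above $M_D^{-1}=M_E$ with $E:=M_D^{-1}(\mathbf1)$; boundedness of $M_E$ forces $E\in\mathfrak M(p,p)=\mathcal H_\infty$ by part~\ref{descr3} of Theorem~\ref{descripcion}. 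From $M_EM_D=I$ one gets $DE=\mathbf1$ as Dirichlet series, hence, since $D,E,DE$ all lie in $\mathcal H_\infty$ and so are holomorphic on $\mathbb C_0$, the identity principle upgrades this to $D(s)E(s)=1$ on $\mathbb C_0$, whence $|D(s)|\ge\|E\|_{\mathcal H_\infty}^{-1}>0$ there. Conversely, if $|D|\ge c$ on $\mathbb C_0$, then $a_1=\lim_{\re s\to+\infty}D(s)\ne0$, so the formal Dirichlet‑convolution inverse $E$ of $D$ is a genuine Dirichlet series, convergent on a half-plane, with $DE=\mathbf1$; since $1/D$ is bounded holomorphic on $\mathbb C_0$, $E$ extends boundedly there, i.e. $E\in\mathcal H_\infty$, and then $M_E=M_D^{-1}$. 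Applying this to $D-\lambda\in\mathcal H_\infty$ (which is never the zero series unless $D\equiv\lambda$, a trivial case where $\overline{D(\mathbb C_0)}=\{\lambda\}$) gives: $M_{D-\lambda}$ is invertible iff it is onto iff $\inf_{\mathbb C_0}|D-\lambda|>0$ iff $\lambda\notin\overline{D(\mathbb C_0)}$, that is, $\sigma(M_D)=\overline{D(\mathbb C_0)}$.

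\emph{Part \ref{espectro3}, first inclusion.} Since $\sigma_c(M_D)\subseteq\sigma(M_D)=\overline{D(\mathbb C_0)}$, it suffices to rule out $\lambda=D(s_0)$ with $s_0\in\mathbb C_{1/2}$: the evaluation $\mathrm{ev}_{s_0}$ is a non-zero bounded functional on $\mathcal H_p$ and $\mathrm{ev}_{s_0}\big((D-\lambda)E\big)=0$ for every $E$, so $\mathrm{Ran}(M_{D-\lambda})\subseteq\ker\mathrm{ev}_{s_0}$ is not dense and $\lambda\notin\sigma_c(M_D)$.

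\emph{Part \ref{espectro3}, outer statement.} Take $\lambda\in\sigma_c(M_D)$ and transport everything to $H_p(\mathbb{D}^\infty_2)$ and $H_p(\mathbb{T}^\infty)$ via the Bohr transform, $f\sim F\sim D$. Dense range of $M_{f-\lambda}$ together with density of the analytic polynomials (and continuity of $M_{f-\lambda}$) yields polynomials $P_n$ with $(F-\lambda)P_n\to\mathbf1$ in $L_p(\mathbb{T}^\infty)$; moreover $\lambda\ne c_0(f)$ (otherwise $\mathrm{Ran}(M_{D-\lambda})\subseteq\{E:c_1(E)=0\}$, not dense), so $(f-\lambda)(0)\ne0$. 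I would then invoke Jensen's inequality on $\mathbb{T}^\infty$, $\log|h(0)|\le\int_{\mathbb{T}^\infty}\log|H|\le\log\|H\|_{L_p}$ for bounded $h\sim H$ with $h(0)\ne0$ (got from the one-variable inequality iterated over finitely many coordinates plus a reverse-Fatou passage $N\to\infty$), and the a.e.\ non-vanishing on $\mathbb{T}^\infty$ of $F-\lambda$ (a non-zero $H_\infty$ function) and of each $P_n$, to write
\[
\log|(f-\lambda)(0)|+\log|P_n(0)|\;\le\;\int_{\mathbb{T}^\infty}\log|F-\lambda|+\int_{\mathbb{T}^\infty}\log|P_n|\;\le\;\log\|(F-\lambda)P_n\|_{L_p}.
\]
As $n\to\infty$ the right-hand side tends to $\log\|\mathbf1\|_{L_p}=0$, while continuity of the $0$-th coefficient functional gives $(f-\lambda)(0)P_n(0)=\widehat{(F-\lambda)P_n}(0)\to1$, so $P_n(0)\to((f-\lambda)(0))^{-1}$ and the left-hand side also tends to $0$. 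Hence both inequalities are asymptotically equalities, and since $\int_{\mathbb{T}^\infty}\log|P_n|\ge\log|P_n(0)|\to-\log|(f-\lambda)(0)|$ we get $\int_{\mathbb{T}^\infty}\log|F-\lambda|\le\log|(f-\lambda)(0)|$; the reverse inequality is Jensen again, so $f-\lambda$ is outer.

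\emph{On the $\mathcal H^{(N)}$ versions and the main obstacle.} The finite versions go through verbatim: the half-plane arguments are unchanged (a series on the first $N$ primes still defines a bounded holomorphic function on $\mathbb C_0$), and in \ref{espectro3} the claim that $f_N-\lambda$ is outer in $H_p(\mathbb{D}^N)$ is exactly the cyclic-implies-outer direction of the classical finite-dimensional Beurling theorem. I expect two points to require genuine care. First, that a Dirichlet series in $\mathcal H_\infty$ bounded away from $0$ on $\mathbb C_0$ has its formal inverse again in $\mathcal H_\infty$ — here one really uses the Dirichlet-series structure, not merely boundedness on $\mathbb C_0$. Second, and this is the heart of \ref{espectro3}, the passage to infinitely many variables in the outer-function argument: that Jensen's inequality on $\mathbb{T}^\infty$ and the a.e.\ non-vanishing of $H_\infty$-boundary values are available, and that cyclicity can be witnessed by honest polynomials so that $\log|(F-\lambda)P_n|$ splits additively.
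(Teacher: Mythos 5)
Your proposal is correct in outline, and for parts \ref{espectro1}, \ref{espectro2} and the first half of \ref{espectro3} it essentially coincides with the paper's argument (injectivity plus the open/closed graph theorem, $M_D-\lambda I=M_{D-\lambda}$, and the evaluation functional at $s_0\in\mathbb{C}_{1/2}$ to exclude $D(\mathbb{C}_{1/2})$ from $\sigma_c$). The two genuine differences are these. First, where the paper simply invokes the known criterion that $1/D\in\mathcal{H}_\infty$ if and only if $\inf_{\mathbb{C}_0}|D|>0$ (citing Queff\'elec--Queff\'elec, Theorem~6.2.1), you rederive it: the commutant characterization forces $M_D^{-1}=M_E$ with $E\in\mathfrak{M}(p,p)=\mathcal{H}_\infty$, and conversely the formal convolution inverse is upgraded to an element of $\mathcal{H}_\infty$ via boundedness of $1/D$; the one step you should make explicit there is Bohr's theorem, which is what lets a somewhere-convergent Dirichlet series with a bounded holomorphic extension to $\mathbb{C}_0$ be declared a member of $\mathcal{H}_\infty$ --- you correctly flag this as the point where the Dirichlet structure enters. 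Second, for the outer statement the paper goes through cyclicity: density of $\{(f-\lambda)P\}$ shows the invariant subspace generated by $f-\lambda$ is all of $H_p(\mathbb{D}_2^\infty)$, and then it quotes Guo--Ni (cyclic $\Rightarrow$ outer). You instead give a direct proof via Jensen's inequality on $\mathbb{T}^\infty$, splitting $\log|(F-\lambda)P_n|$ additively and letting $(F-\lambda)P_n\to\mathbf{1}$; the bookkeeping (including the needed facts that $\lambda\ne c_0(f)$, hence $(f-\lambda)(0)\ne 0$, hence $F-\lambda\ne 0$ a.e.) is sound. What your route buys is self-containedness and an argument that works verbatim for all $1\le p<\infty$ without appealing to the Beurling-type theorem; what it costs is that Jensen's inequality $\log|h(0)|\le\int_{\mathbb{T}^\infty}\log|H|$ for $H_p(\mathbb{T}^\infty)$ is itself a nontrivial infinite-dimensional statement that you only sketch (the passage $N\to\infty$ needs some control on $\int\log^{-}|F_N|$, not just reverse Fatou on $\log^{+}$); it is available in the literature the paper already uses (Aleman--Olsen--Saksman, Guo--Ni), so a citation would close that gap, but as written this is the one step of your proof that is not yet complete.
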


\begin{proof}
\ref{espectro1} Because of the injectivity of $M_D$, and the Closed Graph Theorem, the mapping $M_D$ is surjective if and only if $M_D$ is invertible and this happens if and only if $M_{D^{-1}}$ is well defined and continuous, but then $D^{-1} \in \mathcal{H}_{\infty}$ and \cite[Theorem~6.2.1]{queffelec2013diophantine} gives the conclusion.

\ref{espectro2} Note that $M_D - \lambda I = M_{D-\lambda}$; this and the previous result give that $\lambda \not\in \sigma( M_D)$ if and only if $\vert D(s) - \lambda \vert > \varepsilon$ for some $\varepsilon >0$ and all $s\in \mathbb{C}_0$, and this happens if and only if $\lambda \not\in \overline{D(\mathbb{C}_0)}$.

\ref{espectro3}  Let us suppose that the range of $M_D - \lambda = M_{D-\lambda}$ is dense.  Since polynomials are dense in $\mathcal H_p$ and $M_{D-\lambda}$ is continuous then $A:=\{ (D-\lambda)\cdot P : P \; \text{Dirichlet polynomial} \}$ is dense in the range of $M_{D-\lambda}$. 
By the continuity of the  evaluation at $s_0 \in \mathbb{C}_{1/2}$,  the set of Dirichlet series that vanish in a fixed $s_0$, which we denote by $B(s_0)$, is a proper closed set (because $1 \not\in B(s_0)$). Therefore, if $D-\lambda \in B(s_0)$ then $A\subseteq B(s_0)$, but hence $A$ cannot be dense in $\mathcal{H}_p$. So we have that if $\lambda \in \sigma_c(M_D)$ then $D(s) - \lambda \not= 0$ for every $s\in \mathbb{C}_{1/2}$ and therefore $\lambda \in \overline{D(\mathbb{C}_0)} - D(\mathbb{C}_{1/2})$.

Finally, since $\sigma_c(M_D) =  \sigma_c(M_f)$ then $\lambda \in \sigma_c(M_D)$ if and only if  $M_{f-\lambda}(H_p(\mathbb{D}^\infty_2))$ is dense in $H_p(\mathbb{D}^\infty_2)$. Consider $S(f-\lambda)$ the smallest closed subspace of $H_p(\mathbb{D}^\infty_2)$ such that $z_i\cdot (f-\lambda) \in S(f-\lambda)$ for every $i \in \mathbb{N}$.
Take  $\lambda \in \sigma_c(M_f)$ and note that
\[
\{ (f-\lambda)\cdot P : P \; \text{polynomial} \} \subseteq S(f-\lambda) \subseteq H_p(\mathbb{D}^\infty_2) \,.
\]
Since the polynomials are dense in $H_p(\mathbb{D}^\infty_2)$, and $S(f - \lambda)$ is closed, we obtain that $S(f-\lambda) = H_p(\mathbb{D}^\infty_2)$.
Then $f-\lambda$ is a cyclic vector in $H_{\infty}(\mathbb{D}^\infty_2)$ and therefore the function $f-\lambda \in H_{\infty}(\mathbb{D}^\infty_2)$ is an outer function (see \cite[Corollary~5.5]{guo2022dirichlet}). 
\end{proof}

Note that, in the hypothesis of the previous Proposition, if $D$ is non-constant, then $\sigma_p(M_D)$ is empty and therefore, $\sigma_r(M_D) = \sigma(M_D) \setminus \sigma_c(M_D)$. As a consequence, $\sigma_r(M_D)$ must contain the set $D(\mathbb{C}_{1/2})$.

Note that a value $\lambda$ belongs to the approximate spectrum of a multiplication operator $M_D$ if and only if $M_{D} - \lambda I = M_{D-\lambda}$ is  not bounded from below. If $D$ is not constant and equal to $\lambda$ then, $M_{D-\lambda}$ is injective. Therefore, being bounded from below is equivalent to having closed ranged. Thus, we need to understand when does this operator have closed range. We therefore devote some lines to discuss this property. 

The range of the multiplication operators behaves very differently depending on whether or not it is an endomorphism. We see now that if $p\not= q$ then  multiplication operators never have closed range.

\begin{proposition} \label{prop: rango no cerrado}
Given $1\leq q < p \leq \infty$ and $D\in \mathcal{H}_t$, with $t=pq/(p-q)$ if $p< \infty$ and $t= q$ if $p= \infty$, then $M_D : \mathcal{H}_p \to \mathcal{H}_q$ does not have a closed range. The same holds if we replace $\mathcal{H}$ by $\mathcal{H}^{(N)}$ (with $N \in \mathbb{N}$).
\end{proposition}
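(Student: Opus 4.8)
The plan is to exhibit, for $D \neq 0$, a normalized sequence with $\Vert M_D(\cdot)\Vert_{\mathcal{H}_q}\to 0$; since $M_D$ is injective (if $D\neq 0$ then $D\cdot E=0$ forces $E\equiv 0$ on $\mathbb{C}_{1/2}$, resp. on $\mathbb{C}_0$ when $p=\infty$, by the identity principle) and an injective bounded operator has closed range exactly when it is bounded below, this suffices. By Theorem~\ref{descripcion} the series $D$ is a $(p,q)$-multiplier, so $M_D$ is bounded; and by Proposition~\ref{charite} it is isometrically conjugate to $M_F:H_p(\mathbb{T}^N)\to H_q(\mathbb{T}^N)$, where $F\in H_t(\mathbb{T}^N)$ corresponds to $D$ and $N\in\mathbb{N}\cup\{\infty\}$. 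So it is enough to produce $(g_r)_{0<r<1}\subseteq H_p(\mathbb{T}^N)$ with $\Vert g_r\Vert_{H_p(\mathbb{T}^N)}=1$ and $\Vert F g_r\Vert_{H_q(\mathbb{T}^N)}\to 0$ as $r\to 1^-$.

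The idea is to concentrate the test functions in the first variable only. For $1\leq q<p<\infty$ I would take the one-variable Cole--Gamelin extremal $g_r(u)=\big((1-r^2)(1-ru_1)^{-2}\big)^{1/p}$; by \eqref{optima} (together with the isometric embedding $H_p(\mathbb{D})\hookrightarrow H_p(\mathbb{D}^N_2)$) it has $\Vert g_r\Vert_{H_p(\mathbb{T}^N)}=1$, and on the torus $\vert g_r(\omega)\vert^p$ equals the Poisson kernel $P_r(\theta_1)$. For $p=\infty$ I would instead take $g_r(u)=\big((1-r)(1-ru_1)^{-1}\big)^{k}$ for a fixed integer $k$ with $kq>1$, so that $\Vert g_r\Vert_{H_\infty(\mathbb{T}^N)}=1$. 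In either case the two properties I use are: (i) for each fixed $\delta>0$, $\sup\{\vert g_r(\omega)\vert:\vert\theta_1\vert\geq\delta\}\to 0$ as $r\to 1^-$, because $1-re^{i\theta_1}$ stays bounded away from $0$ there while $1-r\to 0$; and (ii) $\int_{U\times\mathbb{T}^{N-1}}\vert g_r\vert^p\,\mathrm{d}\omega\leq 1$ (when $p<\infty$), resp. $\Vert g_r\Vert_\infty\leq 1$ (when $p=\infty$), where $U=\{\vert\theta_1\vert<\delta\}$.

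To bound $\Vert F g_r\Vert_{H_q}$, fix $\varepsilon>0$ and split $\mathbb{T}^N=(U\times\mathbb{T}^{N-1})\cup(U^c\times\mathbb{T}^{N-1})$. On $U\times\mathbb{T}^{N-1}$, H\"older's inequality with the conjugate exponents $p/q$ and $t/q$ (trivial when $p=\infty$, using (ii)) gives
\[
\int_{U\times\mathbb{T}^{N-1}}\vert F\vert^q\vert g_r\vert^q\;\leq\;\Big(\int_{U\times\mathbb{T}^{N-1}}\vert F\vert^{t}\Big)^{q/t}\Big(\int_{U\times\mathbb{T}^{N-1}}\vert g_r\vert^{p}\Big)^{q/p}\;\leq\;\Big(\int_{U\times\mathbb{T}^{N-1}}\vert F\vert^{t}\Big)^{q/t},
\]
and since $\vert F\vert^{t}\in L_1(\mathbb{T}^N)$ while $\vert U\times\mathbb{T}^{N-1}\vert=\vert U\vert\to 0$ as $\delta\to 0$, absolute continuity of the integral makes this $<\varepsilon$ once $\delta$ is small. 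Freezing such a $\delta$, on $U^c\times\mathbb{T}^{N-1}$ property (i) gives $\int\vert F\vert^q\vert g_r\vert^q\leq\big(\sup_{\vert\theta_1\vert\geq\delta}\vert g_r\vert\big)^q\Vert F\Vert_{H_q}^q\to 0$ as $r\to 1^-$. Hence $\Vert F g_r\Vert_{H_q}^q<2\varepsilon$ for $r$ near $1$, so $\Vert F g_r\Vert_{H_q}\to 0$. The statement for $\mathcal{H}^{(N)}$ with $N\in\mathbb{N}$ is literally the finite-$N$ instance of the same computation.

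The point needing care is the choice of $(g_r)$: a multiplier $D\in\mathcal{H}_t$ need not lie in $\mathcal{H}_\infty$ when $p<\infty$, so one cannot simply pull $\Vert D\Vert_{\mathcal{H}_\infty}$ out of the estimate; instead the test functions must be arranged so that the set on which $\vert g_r\vert$ is large shrinks in measure, and then absolute continuity of $\int\vert F\vert^t$ is played against H\"older's inequality. Localizing in a single coordinate is exactly what makes this work — the genuinely several-variable Cole--Gamelin extremals have boundary modulus a product of Poisson kernels, which is not uniformly small off a small polydisc cap, so the na\"ive choice of test functions would fail.
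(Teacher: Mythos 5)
Your proof is correct, but it follows a genuinely different route from the paper's. The paper argues by perturbation: assuming $M_D$ is bounded below by $C$, it picks a Dirichlet polynomial $P$ with $\Vert D-P\Vert_{\mathcal{H}_t}<C/2$, deduces that $M_P:\mathcal{H}_p\to\mathcal{H}_q$ is still bounded below, and then derives a contradiction from $C\Vert Q\Vert_{\mathcal{H}_p}\leq\Vert P\Vert_{\mathcal{H}_\infty}\Vert Q\Vert_{\mathcal{H}_q}$, since the $\mathcal{H}_p$- and $\mathcal{H}_q$-norms are not equivalent on polynomials. That argument is shorter and soft; it only uses the isometry $\mathfrak{M}(p,q)=\mathcal{H}_t$ and the density of polynomials. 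Your argument instead exhibits an explicit normalized family $(g_r)$ with $\Vert Fg_r\Vert_{H_q}\to 0$, which is constructive and in fact proves the stronger quantitative statement $\inf\{\Vert M_D(E)\Vert_{\mathcal{H}_q}:\Vert E\Vert_{\mathcal{H}_p}=1\}=0$; it also reuses the Cole--Gamelin extremals already present in the paper (in the proofs of Theorem~\ref{descripcion}~\ref{descr4} and Proposition~\ref{ubeda}), and your splitting of $\mathbb{T}^N$ into a shrinking cap in the first coordinate (handled by H\"older with exponents $p/q$ and $t/q$ plus absolute continuity of $\int\vert F\vert^t$) against its complement (where $g_r\to 0$ uniformly) is exactly the right way to cope with a multiplier that is merely in $\mathcal{H}_t$ rather than $\mathcal{H}_\infty$; your remark that the full several-variable extremals would not work is also accurate. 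Two minor points: the condition $kq>1$ in the $p=\infty$ case is superfluous ($\Vert g_r\Vert_\infty\leq 1$ and property (i) hold for any $k\geq 1$); and, like the paper, you must tacitly exclude $D=0$, for which the range $\{0\}$ is trivially closed --- you do flag this, so there is no gap.
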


\begin{proof}
Since $M_D : \mathcal{H}_p \to \mathcal{H}_q$ is injective, the range of $M_D$ is closed if and only if there exists $C>0$ such that $C \Vert E \Vert_{\mathcal{H}_p} \leq \Vert D\cdot E \Vert_{\mathcal{H}_q}$ for every $E\in \mathcal{H}_p$. Suppose that this is the case and choose some Dirichlet polynomial $P\in \mathcal{H}_t$  such that $\Vert D - P \Vert_{\mathcal{H}_t} < \frac{C}{2}$. Given  $E\in \mathcal{H}_p$ we have
\begin{multline*}
\Vert P \cdot E \Vert_{\mathcal{H}_q} 
= \Vert D\cdot E - (D-P) \cdot E \Vert_{\mathcal{H}_q} 
\geq \Vert D \cdot E \Vert_{\mathcal{H}_q} - \Vert ( D - P ) \cdot E \Vert_{\mathcal{H}_q} \\
\geq C \Vert E \Vert_{\mathcal{H}_p} - \Vert D - P \Vert_{\mathcal{H}_t} \Vert E \Vert_{\mathcal{H}_p} 
\geq \frac{C}{2} \Vert E \Vert_{\mathcal{H}_p}.
\end{multline*}
Then $M_P : \mathcal{H}_p \to \mathcal{H}_q$ has closed range. Let now $(Q_n)_n$ be a sequence of polynomials converging in $\mathcal{H}_q$ but not in $\mathcal{H}_p$, then
\[
C\Vert Q_n - Q_m \Vert_{\mathcal{H}_p} \leq \Vert P \cdot (Q_n -Q_m) \Vert_{\mathcal{H}_q} \leq \Vert P \Vert_{\mathcal{H}_\infty} \Vert Q_n - Q_m \Vert_{\mathcal{H}_q},
\]
which is a contradiction.
\end{proof}

As we mentioned before, the behaviour of the range is very different when the operator is an endomorphism, that is, when $p=q$. Recently, in \cite[Theorem~4.4]{antezana2022splitting}, Antenaza, Carando and Scotti have established a series of equivalences for certain Riesz systems in  $L_2(0,1)$. Within the proof of this result, they also characterized those Dirichlet series $D\in \mathcal{H}_\infty$, for which their associated multiplication operator $M_D: \mathcal{H}_p \to \mathcal{H}_p$ has  closed range. The proof also works for $\mathcal H_p$. In our aim to be as clear and complete as possible, we develop below the arguments giving all the necessary definitions.

A character is a function $\gamma: \mathbb{N} \to \mathbb{C}$ that satisfies
\begin{itemize}
\item $\gamma (m n) = \gamma(m) \gamma (n)$ for all $m,n \in \mathbb{N}$,
\item $\vert \gamma (n) \vert =1$ for all $n \in \mathbb{N}$.
\end{itemize}
The set of all characters is denoted by $\Xi$. 
Given a Dirichlet series $D= \sum a_n n^{-s}$, each character $\gamma \in \Xi$ defines a new Dirichlet series by 
\begin{equation}\label{caracter}
D^\gamma (s) =\sum a_n \gamma(n) n^{-s}.
\end{equation}
Each character $\gamma \in\Xi$ can be identified with an element $\omega \in \mathbb{T}^{\infty}$, taking $\omega = (\gamma ( \mathfrak{p}_1) , \gamma(\mathfrak{p}_2), \cdots )$, and then we can rewrite \eqref{caracter} as
\[
D^\omega (s) =\sum a_n \omega(n)^{\alpha(n)} n^{-s},
\]
being $\alpha(n)$ such that $n= \mathfrak{p}^{\alpha(n)}$.

Note that if $\mathcal{L}_{\mathbb{T}^\infty}(D)(u) = F(u) \in H_\infty(\mathbb{T}^\infty),$ then by comparing coefficients we have that $\mathcal{L}_{\mathbb{T}^\infty}(D^\omega)(u) = F(\omega\cdot u) \in H_\infty(\mathbb{T}^\infty)$. By \cite[Lemma~11.22]{defant2018Dirichlet}, for all $\omega \in \mathbb{T}^\infty$ the limit
\[
\lim\limits_{\sigma\to 0} D^\omega(\sigma + it), \; \text{exists for almost all} \;  t\in \mathbb{R}.
\]
Using \cite[Theorem~2]{saksman2009integral}, we can choose a representative $\tilde{F}\in H_\infty(\mathbb{T}^\infty)$ of $F$ which satisfies
\begin{equation*}
    \tilde{F}(\omega)=
    \left\{
    \begin{aligned}
    &\lim\limits_{\sigma\to 0^+} D^\omega(\sigma) \; &\text{if the limit exists}; \\
     &0 \; &\text{otherwise}.
    \end{aligned}
    \right.
\end{equation*}
To see this, consider 
\[
A:=\{ \omega \in \mathbb{T}^\infty : \lim\limits_{\sigma\to 0} D^\omega(\sigma) \; \text{exists}. \},
\]
and let us see that $\vert A \vert =1$. To that, take $T_t: \mathbb{T}^\infty \to \mathbb{T}^\infty$ the Kronecker flow defined by $T_t(\omega)=(\mathfrak{p}^{-it} \omega),$ and notice that $T_t(\omega)\in A$ if and only if $\lim\limits_{\sigma\to 0} D^{T_t(\omega)}(\sigma)$ exists. Since
\[
D^{T_t(\omega)}(\sigma)= \sum a_n (\mathfrak{p}^{-it} \omega)^{\alpha(n)} n^{-\sigma}= \sum a_n \omega^{\alpha(n)} n^{-(\sigma+it)} = D^{\omega}(\sigma+it),
\]
then for all $\omega\in \mathbb{T}^\infty$ we have that $T_t(\omega) \in A$ for almost all $t\in \mathbb{R}.$
Finally, since $\chi_A \in L^1(\mathbb{T}^\infty),$ applying the Birkhoff Theorem for the Kronecker flow \cite[Theorem 2.2.5]{queffelec2013diophantine}, for $\omega_0 = (1,1,1,\dots)$ we have
\[
\vert A \vert = \int\limits_{\mathbb{T}^\infty} \chi_A(\omega) \mathrm{d}\omega = \lim\limits_{R\to \infty} \frac{1}{2R} \int\limits_{-R}^{R} \chi_A (T_t(\omega_0)) \mathrm{d}t = 1.
\]

Then $\tilde{F} \in H_\infty (\mathbb{T}^\infty),$ and to see that $\tilde{F}$ is a representative of $F$ it is enough to compare their Fourier coefficients (see again \cite[Theorem~2]{saksman2009integral}). From now to the end $F$ is always $\tilde{F}$.\\
Fixing the notation 
\[
D^\omega(it_0)= \lim\limits_{\sigma\to 0} D^\omega(\sigma +it),
\]
then taking $t_0= 0,$ we get 
\[
F(\omega) = D^\omega(0)
\]
for almost all $\omega \in \mathbb{T}^\infty$.
Moreover, given $t_0 \in \mathbb{R}$ we have
\begin{equation}\label{igualdad}
D^\omega(it_0) = \lim\limits_{\sigma\to 0^+} D^\omega(\sigma + it_0) = \lim\limits_{\sigma\to 0^+} D^{T_{t_0}(\omega)} (\sigma) = F(T_{t_0}(\omega)).
\end{equation}
From this identity one has the following.

\begin{proposition}\label{acotacion}
The followings conditions are equivalent.
\begin{enumerate}
    \item\label{acotacion1} There exists $\tilde{t}_0$ such that $\vert D^{\omega} (i\tilde{t}_0) \vert \geq \varepsilon$ for almost all $\omega \in \mathbb{T}^\infty$.
    \item\label{acotacion2} For all $t_0$ there exists $B_{t_0} \subset \mathbb{T}^\infty$ with total measure such that $\vert D^\omega(it_0) \vert \geq \varepsilon$ for all $\omega \in B_{t_0}$.
\end{enumerate}
\end{proposition}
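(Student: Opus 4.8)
The plan is to read everything off the identity \eqref{igualdad}, namely $D^{\omega}(it_0)=F(T_{t_0}(\omega))$ for almost every $\omega\in\mathbb{T}^\infty$, combined with the fact that the Kronecker flow $T_t$ is a measure‑preserving bijection of $\mathbb{T}^\infty$: it is translation by the fixed element $(\mathfrak{p}_1^{-it},\mathfrak{p}_2^{-it},\dots)$ of the compact group $\mathbb{T}^\infty$, hence preserves the Haar measure and has inverse $T_{-t}$. The implication \ref{acotacion2}$\Rightarrow$\ref{acotacion1} is then immediate: apply \ref{acotacion2} with $t_0=0$ and take $\tilde t_0=0$; the set $B_0$ has full measure and $\vert D^{\omega}(i0)\vert\geq\varepsilon$ on it, which is precisely \ref{acotacion1}.

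For \ref{acotacion1}$\Rightarrow$\ref{acotacion2} I would first transfer the hypothesis to a statement about $F$ alone. By hypothesis there is a full‑measure set on which $D^{\omega}(i\tilde t_0)$ is defined and has modulus $\geq\varepsilon$; intersecting it with the full‑measure set on which \eqref{igualdad} holds for $t_0=\tilde t_0$ gives $\vert F(T_{\tilde t_0}(\omega))\vert\geq\varepsilon$ for almost every $\omega$. Pushing this forward by the measure‑preserving bijection $T_{-\tilde t_0}$ yields the flow‑free statement $\vert F(\eta)\vert\geq\varepsilon$ for almost every $\eta\in\mathbb{T}^\infty$. This "upgrade" is the only genuinely substantive step. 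Once it is in hand, fix an arbitrary $t_0\in\mathbb{R}$ and let $B_{t_0}$ be the intersection of the full‑measure set where \eqref{igualdad} holds for that $t_0$ with the set $T_{t_0}^{-1}\bigl(\{\,\eta:\vert F(\eta)\vert\geq\varepsilon\,\}\bigr)$, which again has full measure because $T_{t_0}$ preserves the Haar measure. On $B_{t_0}$ one has $\vert D^{\omega}(it_0)\vert=\vert F(T_{t_0}(\omega))\vert\geq\varepsilon$, which is exactly \ref{acotacion2}.

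I do not expect a real obstacle here; the work is essentially bookkeeping of the several null sets involved — in particular the fact that the $\omega$‑domain on which $D^{\omega}(it)$ is defined depends on $t$ (through the condition $T_t(\omega)\in A$) — and making explicit at each step that $T_t$ transports sets of full measure to sets of full measure. All of this is already implicit in the discussion preceding the statement, namely in the construction of the representative $\tilde F$ via the Birkhoff theorem for the Kronecker flow and in the derivation of \eqref{igualdad}.
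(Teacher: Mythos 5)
Your proposal is correct and takes essentially the same route as the paper: both implications rest on the identity \eqref{igualdad} together with the fact that the Kronecker flow $T_t$ is a measure-preserving translation of $\mathbb{T}^\infty$, so full-measure sets are carried to full-measure sets. The only cosmetic difference is that you factor the argument through the intermediate statement $\vert F(\eta)\vert\geq\varepsilon$ for almost every $\eta$, whereas the paper defines $B_{t_0}$ directly as the translate of $B_{\tilde t_0}$ by $\mathfrak{p}^{-i(-t_0+\tilde t_0)}$; the substance is identical.
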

\begin{proof}
If~\ref{acotacion1}, holds take $t_0$ and consider 
\[
B_{t_0} = \{\mathfrak{p}^{-i(-t_0+\tilde{t}_0)}\cdot \omega : \; \omega\in B_{\tilde{t}_0} \},
\]
which is clearly a total measure set. Take $\omega{'} \in B_{t_0}$ and choose $\omega \in B_{\tilde{t}_0}$ such that $\omega{'} = \mathfrak{p}^{-i(-t_0+\tilde{t}_0)}\cdot \omega$, then by \eqref{igualdad} we have that 
\[
\vert D^{\omega{'}} (it_0) \vert = \vert F(T_{\tilde{t}_0}(\omega)) \vert \geq \varepsilon\,,
\]
and this gives~\ref{acotacion2}. The converse implications holds trivially.
\end{proof}

We now give an $\mathcal H_p$-version of \cite[Theorem~4.4.]{antezana2022splitting}.

\begin{theorem}\label{ACS}
Let $1\leq p < \infty$, and $D \in \mathcal{H}_\infty$. Then the following statements are equivalent.
\begin{enumerate}
    \item\label{ACS1} There exists $m>0$ such that $\vert F(\omega) \vert \geq M$ for almost all $\omega\in \mathbb{T}^\infty$;
    \item\label{ACS2} The operator $M_D : \mathcal{H}_p \to \mathcal{H}_p$ has closed range;
    \item\label{ACS3} There exists $m>0$ such that for almost all $(\gamma, t) \in \Xi \times \mathbb{R}$ we have 
    \[
    \vert D^\gamma(it) \vert\geq m.
    \]
\end{enumerate}
Even more, in that case,
\begin{multline*}
    \inf\left\{\Vert M_D(E) \Vert_{\mathcal{H}_p} : E\in \mathcal{H}_p, \Vert E \Vert_{\mathcal{H}_p}=1 \right\} \\ = \essinf \left\{ \vert F(\omega) \vert : \omega \in \mathbb{T}^\infty \right\}
    = \essinf \left\{ \vert D^\gamma(it) \vert : (\gamma,t)\in \Xi \times \mathbb{R} \right\}.
\end{multline*}
\end{theorem}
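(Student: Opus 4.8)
The plan is to transfer everything to the polytorus and reduce the whole statement to a single identity for the operator $M_F$ on $H_p(\mathbb{T}^\infty)$, where $F=\mathcal{L}_{\mathbb{T}^\infty}(D)$. By Proposition~\ref{charite} the Bohr transform is an isometry that intertwines $M_D$ on $\mathcal{H}_p$ with $M_F$ on $H_p(\mathbb{T}^\infty)$, so
\[
\inf\{\Vert M_DE\Vert_{\mathcal{H}_p}:\Vert E\Vert_{\mathcal{H}_p}=1\}=\inf\{\Vert FG\Vert_{H_p(\mathbb{T}^\infty)}:\Vert G\Vert_{H_p(\mathbb{T}^\infty)}=1\}.
\]
Since $D\neq0$, the function $F$ cannot vanish on a set of positive measure (by the Nevanlinna-class uniqueness results used in the proof of Theorem~\ref{jonas}), hence $M_F$ is injective, and an injective bounded operator has closed range exactly when it is bounded below. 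Consequently \ref{ACS2} is equivalent to the positivity of the infimum above, and the whole theorem -- including the ``even more'' part -- will follow once we establish
\[
\inf_{\Vert G\Vert_{H_p(\mathbb{T}^\infty)}=1}\Vert FG\Vert_{H_p(\mathbb{T}^\infty)}=\essinf_{\omega\in\mathbb{T}^\infty}\vert F(\omega)\vert=\essinf_{(\gamma,t)\in\Xi\times\mathbb{R}}\vert D^\gamma(it)\vert.
\]

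The inequality $\inf\Vert FG\Vert_p\ge\essinf\vert F\vert$ is immediate, since $\Vert FG\Vert_p^p=\int_{\mathbb{T}^\infty}\vert F\vert^p\vert G\vert^p\,\mathrm{d}\omega\ge(\essinf\vert F\vert)^p\Vert G\Vert_p^p$; in particular this already proves \ref{ACS1}$\Rightarrow$\ref{ACS2}. For the reverse inequality fix $\varepsilon>0$, set $\mu=\essinf_\omega\vert F(\omega)\vert$ and let $A=\{\omega\in\mathbb{T}^\infty:\vert F(\omega)\vert\le\mu+\varepsilon\}$, a set of positive measure. For $0<\eta<1$ put $w=\mathbf{1}_A+\eta\,\mathbf{1}_{\mathbb{T}^\infty\setminus A}$; then $\log w\in L_\infty(\mathbb{T}^\infty)$, so there is an outer function $g\in H_\infty(\mathbb{T}^\infty)$ with $\vert g\vert=w^{1/p}$ a.e., and $G=g/\Vert g\Vert_p$ satisfies $\Vert G\Vert_p=1$ and $\int_{\mathbb{T}^\infty\setminus A}\vert G\vert^p\,\mathrm{d}\omega=\eta\,\vert\mathbb{T}^\infty\setminus A\vert/\Vert g\Vert_p^p\le\eta/\vert A\vert$. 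Hence
\[
\Vert FG\Vert_p^p\le(\mu+\varepsilon)^p\int_A\vert G\vert^p\,\mathrm{d}\omega+\Vert F\Vert_\infty^p\int_{\mathbb{T}^\infty\setminus A}\vert G\vert^p\,\mathrm{d}\omega\le(\mu+\varepsilon)^p+\Vert F\Vert_\infty^p\,\frac{\eta}{\vert A\vert},
\]
and letting $\eta\to0$ and then $\varepsilon\to0$ gives $\inf\Vert FG\Vert_p\le\mu$. I expect the construction of this test function to be the main obstacle, since it rests on the availability of outer functions on $\mathbb{T}^\infty$ with modulus bounded away from $0$ and $\infty$; if one prefers to avoid the infinite-dimensional outer-function machinery, the set $A$ can first be approximated, up to a small error in measure, by a cylinder set depending on finitely many coordinates, and the outer function can then be produced on a finite polytorus $\mathbb{T}^N$ by the classical construction and regarded inside $H_p(\mathbb{T}^\infty)$ through the canonical inclusion.

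For the second equality recall, from the discussion preceding the theorem, that $F(\omega)=D^\omega(0)$ for a.e.\ $\omega$ and, by~\eqref{igualdad}, that $D^\omega(it_0)=F(T_{t_0}(\omega))$ for every $t_0\in\mathbb{R}$ and a.e.\ $\omega$, where $T_{t_0}$ is the Haar-measure preserving (and invertible) Kronecker flow. Thus for a.e.\ fixed $t_0$ the functions $\gamma\mapsto\vert D^\gamma(it_0)\vert$ and $\gamma\mapsto\vert F(\gamma)\vert$ have the same distribution, so $\essinf_{\gamma}\vert D^\gamma(it_0)\vert=\essinf_{\gamma}\vert F(\gamma)\vert$; Fubini's theorem on $\Xi\times\mathbb{R}$ then yields
\[
\essinf_{(\gamma,t)\in\Xi\times\mathbb{R}}\vert D^\gamma(it)\vert=\essinf_{t\in\mathbb{R}}\Big(\essinf_{\gamma\in\Xi}\vert D^\gamma(it)\vert\Big)=\essinf_{\gamma\in\Xi}\vert F(\gamma)\vert
\]
(equivalently, one may invoke Proposition~\ref{acotacion} at every threshold $\varepsilon$). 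Combining the two equalities, $M_F$ is bounded below iff $\mu>0$ iff $\essinf_{(\gamma,t)}\vert D^\gamma(it)\vert>0$, which is exactly \ref{ACS1}$\Leftrightarrow$\ref{ACS2}$\Leftrightarrow$\ref{ACS3}, and the three quantities appearing in the ``even more'' part all coincide with $\mu$. Running the same argument on $\mathbb{T}^N$ in place of $\mathbb{T}^\infty$ gives the corresponding statement for $\mathcal{H}^{(N)}$.
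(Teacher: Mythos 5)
Your overall architecture is sound and largely parallels the paper: the reduction via the Bohr transform to the single identity $\inf_{\Vert G\Vert_{H_p}=1}\Vert FG\Vert_{H_p(\mathbb{T}^\infty)}=\essinf\vert F\vert$, the easy inequality $\Vert FG\Vert_p\ge(\essinf\vert F\vert)\Vert G\Vert_p$ (which is \ref{ACS1}$\Rightarrow$\ref{ACS2}), and the identification of $\essinf_\omega\vert F(\omega)\vert$ with $\essinf_{(\gamma,t)}\vert D^\gamma(it)\vert$ via the Kronecker flow and Fubini are all as in the paper. The gap is in the reverse inequality, i.e.\ in \ref{ACS2}$\Rightarrow$\ref{ACS1}: you need a norm-one $G\in H_p(\mathbb{T}^\infty)$ concentrated on the essentially arbitrary measurable set $A=\{\vert F\vert\le\mu+\varepsilon\}$, and you manufacture it as an outer function $g$ with $\vert g\vert=w^{1/p}$ for the two-valued weight $w=\mathbf{1}_A+\eta\,\mathbf{1}_{A^c}$. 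This is precisely where polydisc function theory departs from the one-variable theory: for $N\ge 2$ a positive function on $\mathbb{T}^N$ with bounded logarithm need not be the boundary modulus of any Nevanlinna-class function, let alone of an outer one (see \cite{rudin1969function}; for a zero-free $g$ the function $\log\vert g\vert$ is pluriharmonic, so the Fourier spectrum of $\log w$ would have to lie in $\mathbb{N}_0^N\cup(-\mathbb{N}_0^N)$, which an indicator-type weight does not satisfy). Your fallback of replacing $A$ by a finite-dimensional cylinder and invoking ``the classical construction on $\mathbb{T}^N$'' hits the same wall, since that construction is genuinely one-dimensional and the cylinder base is still an arbitrary measurable subset of $\mathbb{T}^N$.

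The paper circumvents this by never requesting an analytic function with prescribed modulus. It approximates $\chi_A$ in $L^p(\mathbb{T}^\infty)$ by trigonometric polynomials $P_k$ (in $z$ and $\overline z$, of degree $n_k$ in $N_k$ variables) and then multiplies by the monomial $z_1^{n_k}\cdots z_{N_k}^{n_k}$: this shifts $P_k$ into the analytic class without altering any of the $L^p$ norms involved, so the lower bound $\Vert M_F(\cdot)\Vert_p\ge m\Vert\cdot\Vert_p$ may be applied and, in the limit, yields $m^p\vert A\vert\le\int_A\vert F\vert^p\,\mathrm{d}\omega$ with $A=\{\vert F\vert<m\}$, forcing $\vert A\vert=0$. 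If you replace your outer-function step by this monomial-shift device (or otherwise justify the existence of your test functions), the remainder of your argument, including the ``even more'' identities, goes through as you describe.
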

\begin{proof}
\ref{ACS1} $\Rightarrow$~\ref{ACS2} $M_D$ has closed range if and only if the rage of $M_F$ is closed. Because of the injectivity of $M_F$ we have, by Open Mapping Theorem, that $M_F$ has closed range if and only if there exists a positive constant $m>0$ such that
\[
\Vert M_F(G) \Vert_{H_p(\mathbb{T}^\infty)} \geq m \Vert G \Vert_{H_p(\mathbb{T}^\infty)},
\]
for every $G\in H_p(\mathbb{T}^\infty)$. If $\vert F(\omega)\vert \geq m$ a.e. $\omega \in \mathbb{T}^\infty$, then for $G \in H_p(\mathbb{T}^\infty)$ we have that
\[
\Vert M_F (G) \Vert_{H_p(\mathbb{T}^\infty)} = \Vert F\cdot G \Vert_{H_p(\mathbb{T}^\infty)} =\left(\int\limits_{\mathbb{T}^\infty} \vert FG(\omega)\vert^p \mathrm{d} \omega\right)^{1/p} \geq m \Vert G\Vert_{H_p(\mathbb{T}^\infty)}.
\]

\ref{ACS2} $\Rightarrow$~\ref{ACS1} Let $m>0$ be such that $\Vert M_F(G)\Vert_{H_p(\mathbb{T}^\infty)} \geq m \Vert G \Vert_{H_p(\mathbb{T}^\infty)}$ for all $G\in H_p(\mathbb{T}^\infty)$. Let us consider 
\[
A=\{ \omega\in \mathbb{T}^\infty : \vert F(\omega) \vert <m\}.
\]
Since $\chi_A \in L^p(\mathbb{T}^\infty)$, by the density of the trigonometric polynomials in $L^p(\mathbb{T}^\infty)$ (see \cite[Proposition~5.5]{defant2018Dirichlet}) there exist a sequence $(P_k)_k$ of degree $n_k$ in $N_k$ variables (in $z$ and $\overline{z}$) such that 
\[
\lim\limits_{k} P_k = \chi_A \; \text{in} \; L^p(\mathbb{T}^\infty).
\]
Therefore 
\begin{align*}
m^p\vert A \vert &= m^p\Vert \chi_A \Vert^p_{L^p(\mathbb{T}^\infty)} = m^p\lim\limits_k \Vert P_k \Vert^p_{L^p(\mathbb{T}^\infty)}\\
&=m^p\lim\limits_k \Vert z_1^{n_k} \cdots z_{N_k}^{n_k} P_k \Vert^p_{L_p(\mathbb{T}^\infty)}\\
&\leq \liminf\limits_k \Vert M_F(z_1^{n_k} \cdots z_{N_k}^{n_k} P_k) \Vert^p_{L_p(\mathbb{T}^\infty)}\\
&= \Vert F\cdot \chi_A \Vert^p_{L^p(\mathbb{T}^\infty)} = \int\limits_{A} \vert F(\omega) \vert^p \mathrm{d}\omega. 
\end{align*}
Since $\vert F(\omega) \vert < m$ for all $\omega \in A$, this implies that $\vert A \vert =0$.

\ref{ACS2} $\Rightarrow$~\ref{ACS3} By the definition of $F$ we have $m \leq \vert F(\omega) \vert = \lim\limits_{\sigma\to 0^+} \vert D^\omega (\sigma) \vert$ for almost all $\omega \in \mathbb{T}^\infty$. Combining this with Remark~\ref{acotacion} we get that the $t-$sections of the set
\[
C= \{ (\omega, t ) \in \mathbb{T}^\infty \times \mathbb{R} : \; \vert D^\omega(it) \vert < \varepsilon \},
\]
have zero measure. As a corollary of Fubini's Theorem we get that $C$ has measure zero. The converse~\ref{ACS3} $\Rightarrow$~\ref{ACS2} also follows from Fubini's Theorem.
The last equality follows from the proven equivalences.
\end{proof}

In the case of polynomials, using the continuity of the polynomials and Kronecker's theorem (see e.g. \cite[Proposition~3.4]{defant2018Dirichlet}), the condition of Theorem~\ref{ACS} is restricted to the image of the polynomial on the line of complex with zero real part. As a consequence, one can extend this characterization to the Dirichlet series belonging to $\mathcal{A}(\mathbb{C}_0)$, that is the closed subspace of $\mathcal{H}_\infty$ given by the Dirichlet series that are uniformly continuous on $\mathbb{C}_0$ (see \cite[Definition~2.1]{aron2017dirichlet}).

\begin{corollary}\label{torres}
Let $1\leq p < \infty$ then

\begin{enumerate}
\item\label{torres1} Let $P\in \mathcal{H}_\infty$ be a Dirichlet polynomial. Then $M_P: \mathcal{H}_p \to \mathcal{H}_p$ has closed range if and only if there exists a constant $m>0$ such that $\vert P(it) \vert \geq m$ for all $t\in \mathbb{R}$. 
\item\label{torres2} Let $D\in \mathcal{A}(\mathbb{C}_0)$, then $M_D: \mathcal{H}_p \to \mathcal{H}_p$ has closed range if and only if there exists a constant $m>0$ such that $\vert D(it) \vert \geq m$ for all $t\in \mathbb{R}$. 
\end{enumerate}
Even more, in each case
\[
\inf \{ \Vert M_D(E) \Vert_{\mathcal{H}_p} : E \in \mathcal{H}_p,\; \Vert E \Vert_{\mathcal{H}_p}=1 \} = \inf \{ \vert D(it) \vert : t\in \mathbb{R} \}.
\]
\end{corollary}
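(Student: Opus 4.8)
The plan is to derive Corollary~\ref{torres} from Theorem~\ref{ACS} by replacing the condition ``$|D^{\gamma}(it)|\ge m$ for a.e.\ $(\gamma,t)$'' by a condition involving only the vertical line $\re s=0$; the bridge is Kronecker's theorem.

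For~\ref{torres1} I would argue as follows. Write $P=\sum_{n=1}^{N}a_{n}n^{-s}$, let $\mathfrak{p}_{1},\dots,\mathfrak{p}_{k}$ be the primes occurring in $P$, and let $Q(z_{1},\dots,z_{k})=\sum_{n}a_{n}z^{\alpha(n)}$ be its associated polynomial, so that $P^{\gamma}(s)=Q\bigl(\gamma_{1}\mathfrak{p}_{1}^{-s},\dots,\gamma_{k}\mathfrak{p}_{k}^{-s}\bigr)$ when $\gamma$ is identified with $(\gamma(\mathfrak{p}_{j}))_{j}\in\mathbb{T}^{k}$. Since $\log\mathfrak{p}_{1},\dots,\log\mathfrak{p}_{k}$ are rationally independent, Kronecker's theorem (\cite[Proposition~3.4]{defant2018Dirichlet}) shows that $\{(\gamma_{1}\mathfrak{p}_{1}^{-it},\dots,\gamma_{k}\mathfrak{p}_{k}^{-it}):t\in\mathbb{R}\}$ is dense in $\mathbb{T}^{k}$ for every fixed $\gamma$, and the continuity of $Q$ then gives $\inf_{t}|P(it)|=\min_{\mathbb{T}^{k}}|Q|$. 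Moreover, for each fixed $t$ the map $\gamma\mapsto(\gamma_{j}\mathfrak{p}_{j}^{-it})_{j}$ transports the Haar measure of $\Xi$ to that of $\mathbb{T}^{k}$, so $\essinf_{(\gamma,t)}|P^{\gamma}(it)|=\essinf_{\mathbb{T}^{k}}|Q|=\min_{\mathbb{T}^{k}}|Q|=\inf_{t}|P(it)|$ (full support plus continuity of $|Q|$). Feeding this into Theorem~\ref{ACS} yields both the closed-range criterion of~\ref{torres1} and the infimum formula; note that the argument in fact gives $|P^{\gamma}(it)|\ge\inf_{t'}|P(it')|$ for \emph{every} $(\gamma,t)$.

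For~\ref{torres2} the plan is to approximate. Pick Dirichlet polynomials $P_{n}\to D$ in $\mathcal{H}_{\infty}$ (these are dense in $\mathcal{A}(\mathbb{C}_{0})$). Since $E\mapsto E^{\gamma}$ is an isometry of $\mathcal{H}_{\infty}$ and, on $\mathcal{A}(\mathbb{C}_{0})$, the $\mathcal{H}_{\infty}$-norm dominates $\sup_{t}|\,\cdot\,(it)|$ (boundary values being uniform limits of interior values), one gets $\sup_{(\gamma,t)}|P_{n}^{\gamma}(it)-D^{\gamma}(it)|\to 0$, hence $\inf_{t}|P_{n}(it)|\to\inf_{t}|D(it)|$. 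Combining with the last observation of the previous step and letting $n\to\infty$ gives $|D^{\gamma}(it)|\ge\inf_{t'}|D(it')|$ for every $(\gamma,t)$ where $D^{\gamma}(it)$ is defined; the opposite bound for the essential infimum is immediate from the trivial character together with the continuity of $t\mapsto D(it)$. Thus $\essinf_{(\gamma,t)}|D^{\gamma}(it)|=\inf_{t}|D(it)|$, and Theorem~\ref{ACS} finishes~\ref{torres2} and its infimum formula.

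I expect the main obstacle to be the transition from polynomials to $\mathcal{A}(\mathbb{C}_{0})$: one must check that the boundary function $D^{\gamma}(it)$ used in Theorem~\ref{ACS} (defined through the representative $\tilde F$ fixed before it) agrees, for every $(\gamma,t)$, with the continuous extension of $D$ to $\overline{\mathbb{C}_{0}}$, and that $\mathcal{H}_{\infty}$-convergence forces $P_{n}^{\gamma}(it)\to D^{\gamma}(it)$ uniformly in the pair $(\gamma,t)$ simultaneously — which is precisely where membership in $\mathcal{A}(\mathbb{C}_{0})$ matters, since it ensures that $\lim_{\sigma\to 0^{+}}D^{\gamma}(\sigma+it)$ exists for \emph{all} $(\gamma,t)$ rather than merely almost all, so the pointwise bounds coming from the $P_{n}$ survive the limit everywhere.
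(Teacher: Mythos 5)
Your proposal is correct and follows essentially the same route as the paper: both reduce Corollary~\ref{torres} to Theorem~\ref{ACS} via Kronecker's theorem and continuity of the associated (polynomial) function on the polytorus, and both handle $\mathcal{A}(\mathbb{C}_0)$ by uniform approximation with Dirichlet polynomials; the only cosmetic difference is that you pivot on condition~\ref{ACS3} (the vertical limits $D^\gamma(it)$) while the paper pivots on condition~\ref{ACS1} (the boundary function $F$ on $\mathbb{T}^\infty$). One small point to tighten: to see that $\essinf_{(\gamma,t)}|D^\gamma(it)|\le\inf_t|D(it)|$ you need joint continuity of $(\gamma,t)\mapsto D^\gamma(it)$ (equivalently, continuity of $F$ on $\mathbb{T}^\infty$, which holds as $F$ is a uniform limit of polynomials) so that the set where $|D^\gamma(it)|$ is small has positive measure — the slice $\{1\}\times\mathbb{R}$ alone is null in $\Xi\times\mathbb{R}$.
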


\begin{proof}
\ref{torres1} Let $F= \mathcal{L}_{\mathbb{T}^\infty}(P)$ then, by Theorem~\ref{ACS}, $M_P$ has close range if and only if there exists a constant $m>0$ such that $\vert F (\omega) \vert \geq m$ a.e. $\omega \in \mathbb{T}^\infty$. Since $F(\omega)= \sum a_{\alpha} \omega^\alpha$ is continuous and by Kronecker's theorem
\[
\{(\mathfrak{p}_1^{-it},\cdots, \mathfrak{p}_N^{-it}, \omega) \; : \; t \in \mathbb{R}, \; \omega \in \mathbb{T}^\infty \}
\]
is dense in $\mathbb{T}^\infty$, then $M_P$ has closed range if and only if $\vert F(\mathfrak{p}_1^{-it},\cdots, \mathfrak{p}_N^{-it}, \omega) \vert \geq m$ for every $t \in \mathbb{R}$ and $\omega \in \mathbb{T}^\infty$. The result is concluded from the fact that 
\begin{equation*}
F(\mathfrak{p}_1^{-it},\cdots, \mathfrak{p}_N^{-it}, \omega) = \sum a_{\alpha} \mathfrak{p}_1^{-it\alpha_1}\cdots \mathfrak{p}_N^{-it\alpha_N} = \sum a_n n^{-it} = P(it).
\end{equation*}

\ref{torres2} Since $D$ is uniformly continuous on $\mathbb{C}_0$ then $D$ admits a uniformly continuous extension to the half-plane $\{s\in \mathbb{C} : \re s\geq 0\}.$ By \cite[Theorem~2.3]{aron2017dirichlet}, $D$ is the uniform limit on $\mathbb{C}_0$ of a sequence of Dirichlet polynomials $P_n$. Let $\mathcal{A}(\mathbb{T}^\infty)$ be the closed subspace of $H_\infty(\mathbb{T}^\infty)$ given by the Bohr transform of $\mathcal{A}(\mathbb{C}_0)$. If $\mathcal{L}_{\mathbb{T}^\infty}(D) = F \in \mathcal{A}(\mathbb{T}^\infty)$, since it is the uniform limit of polynomials, then $F$ is continuous. Then, given $t\in \mathbb{R}$ we have that

\begin{align}\label{borde}
\vert F(\mathfrak{p}^{-it}) \vert = \lim\limits_n \vert \mathcal{B}_{\mathbb{T}^\infty} (P_n) (\mathfrak{p}^{-it}) \vert = \lim\limits_n \vert P_n(it) \vert = \vert D(it) \vert.
\end{align}
Let us suppose first that the range of $M_D:\mathcal{H}_p \to \mathcal{H}_p$ is closed and let $m>0$ be such that $\Vert M_D(E) \Vert_{\mathcal{H}_p} \geq \Vert E \Vert_{\mathcal{H}_p}$. Given $\varepsilon >0$ there exists $n_0\in\mathbb{N}$ such that $\Vert D - P_n \Vert_{\mathcal{H}_\infty} <\varepsilon$  for every $n_0 \leq n$. Therefore, if $E \in \mathcal{H}_p$ we have that
\[
\Vert M_{P_n} (E) \Vert_{\mathcal{H}_p} \geq \Vert M_D(E) \Vert_{\mathcal{H}_p} - \Vert M_{D-P_n}(E) \Vert_{\mathcal{H}_p} \geq (m - \varepsilon) \Vert E \Vert_{\mathcal{H}_p}.
\]
Then by~\ref{torres1}, $\vert P_n(it) \vert \geq m-\varepsilon$ for every $n\geq n_0$ and $t\in \mathbb{R}$ and hence, by \eqref{borde}, $\vert D(it)\vert \geq m - \varepsilon$ for every $\varepsilon >0$.

Let us suppose now that there exists a constant $m>0$ such that $\vert D(it) \vert \geq m$ for every $t\in \mathbb{R}$, then from \eqref{borde} we have that $\vert F(\mathfrak{p}^{-it})\vert \geq m$ for all $t\in \mathbb{R}$. Since $F$ is continuous, and by Kronecker's theorem, $\vert F(\omega) \vert \geq m$ for all $\omega \in \mathbb{T}^\infty.$ Therefore, by Theorem~\ref{ACS}, $M_D$ has closed range.
\end{proof}

 For what was said above, in the non trivial case, a value $\lambda$ belongs to the approximate spectrum of $M_D$ if and only if the range of $M_{D-\lambda}$ is not closed. Then, Theorem ~\ref{ACS} and Proposition~\ref{torres} give us a characterization of the approximate spectrum. For this, we need the definition of the essential range of the function $[(\gamma,t) \rightsquigarrow D^{\gamma}(it)]$. That is, 
\[
\Big\{ \lambda \in \mathbb{C} : \text{ for all } \varepsilon>0, \; \mu\big\{(\gamma,t): \vert D^{\gamma}(it) - \lambda  \vert < \varepsilon \big\}>0 \Big\},
\]
where $\mu$ stands for the Haar measure in $\Xi \times\mathbb{R}$.
\begin{theorem}\label{aproximado}
Let $1\leq p < \infty$
\begin{enumerate}
\item\label{aproximado1} If $D\in \mathcal{H}_\infty$, then $\sigma_{ap}(M_D)=essran [(\gamma,t) \rightsquigarrow D^{\gamma}(it)]$.
\item\label{aproximado2} If $D\in \mathcal{A}(\mathbb{C}_0)$, then $\sigma_{ap}(M_D) = \overline{\{  D(it) : t\in\mathbb{R} \}}.$
\end{enumerate}
\end{theorem}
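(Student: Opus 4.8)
The plan is to deduce both parts from the closed-range criteria already established, using the reformulation recorded just before the statement: $\lambda\in\sigma_{ap}(M_D)$ holds exactly when $M_{D-\lambda}=M_D-\lambda I$ is not bounded below, i.e.
\[
\lambda\in\sigma_{ap}(M_D)\iff \inf\bigl\{\Vert M_{D-\lambda}(E)\Vert_{\mathcal H_p}\colon E\in\mathcal H_p,\ \Vert E\Vert_{\mathcal H_p}=1\bigr\}=0 .
\]
Two elementary observations make this work. First, $D-\lambda$ still lies in $\mathcal H_\infty$ (resp. in $\mathcal A(\mathbb C_0)$) when $D$ does, and $(D-\lambda)^{\gamma}=D^{\gamma}-\lambda$ for every $\gamma\in\Xi$ because $\gamma(1)=1$; hence $(D-\lambda)^{\gamma}(it)=D^{\gamma}(it)-\lambda$. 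Second, the final equalities of Theorem~\ref{ACS} and of Corollary~\ref{torres} in fact hold for \emph{every} admissible $D$, not only those with closed range: if $M_D$ has non-closed range then, since $M_D$ is injective whenever $D\neq0$, it is not bounded below, so the left-hand side is $0$; and in that situation the corresponding pointwise (essential) lower bound also fails, so the right-hand side is $0$ too. The case $D\equiv0$ is trivial, and more generally the constant case $D\equiv\lambda$ (where $M_{D-\lambda}=0$) will be harmless.

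For~\ref{aproximado1} I would then chain these facts: applying Theorem~\ref{ACS} (in the unconditional form just discussed) to $D-\lambda\in\mathcal H_\infty$ gives
\[
\inf\bigl\{\Vert M_{D-\lambda}(E)\Vert_{\mathcal H_p}\colon \Vert E\Vert_{\mathcal H_p}=1\bigr\}=\essinf\bigl\{\vert D^{\gamma}(it)-\lambda\vert\colon (\gamma,t)\in\Xi\times\mathbb R\bigr\},
\]
so $\lambda\in\sigma_{ap}(M_D)$ precisely when this essential infimum vanishes. Finally, $\essinf\{\vert g-\lambda\vert\}=0$ is exactly the statement that $\lambda$ belongs to the essential range of $g$, since it says that $\{\vert g-\lambda\vert<\varepsilon\}$ has positive measure for every $\varepsilon>0$. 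This yields $\sigma_{ap}(M_D)=essran[(\gamma,t)\rightsquigarrow D^{\gamma}(it)]$.

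For~\ref{aproximado2} the same scheme applies, now invoking part~\ref{torres2} of Corollary~\ref{torres} (again unconditionally) for $D-\lambda\in\mathcal A(\mathbb C_0)$:
\[
\inf\bigl\{\Vert M_{D-\lambda}(E)\Vert_{\mathcal H_p}\colon \Vert E\Vert_{\mathcal H_p}=1\bigr\}=\inf\{\vert D(it)-\lambda\vert\colon t\in\mathbb R\}.
\]
Hence $\lambda\in\sigma_{ap}(M_D)$ if and only if $\inf_{t\in\mathbb R}\vert D(it)-\lambda\vert=0$, which is exactly the condition $\lambda\in\overline{\{D(it)\colon t\in\mathbb R\}}$.

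I expect the only real (if modest) obstacle to be the bookkeeping in the second observation: one must check that the infimum identities of Theorem~\ref{ACS} and Corollary~\ref{torres} survive without the closed-range assumption. The argument rests on $M_{D-\lambda}$ being injective, so the degenerate case $D\equiv\lambda$ must be isolated — there $M_{D-\lambda}=0$, and $\lambda$ trivially lies in $\sigma_{ap}(M_D)$ and in both right-hand sides. Everything else is a direct transcription of the two cited results.
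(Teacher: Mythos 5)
Your argument is correct and follows essentially the same route as the paper: both reduce membership of $\lambda$ in $\sigma_{ap}(M_D)$ to the failure of the closed-range criterion of Theorem~\ref{ACS} (resp.\ Corollary~\ref{torres}) applied to $D-\lambda$, using $(D-\lambda)^{\gamma}(it)=D^{\gamma}(it)-\lambda$ and the identification of a vanishing essential infimum with membership in the essential range (resp.\ the closure of $\{D(it)\}$). Your extra care about the unconditional validity of the infimum identities and about the degenerate case $D\equiv\lambda$ only makes explicit what the paper leaves implicit.
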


\begin{proof}
\ref{aproximado1} A value  $\lambda$ belongs to $\sigma_{ap}(M_D)$ if and only if the range of $M_{D-\lambda}$ is not closed; and by Theorem~\ref{ACS}, if and only if 
\[
\essinf \{ \vert D^\gamma(it)- \lambda\vert: (\gamma, t) \in \Xi \times\mathbb{R}\}=\essinf \{ \vert (D-\lambda)^\gamma(it)\vert: (\gamma, t) \in \Xi \times\mathbb{R}\} = 0,
\]
but that is equivalent to say that the measure of $\{\vert  D^\gamma (it) - \lambda \vert < \varepsilon : (\gamma, t) \in \Xi\times\mathbb{R}\}$ is bigger than zero for every $\varepsilon >0$. In other words, $\lambda$ belongs to the essential range of $[(\gamma,t) \rightsquigarrow D^{\gamma}(it)]$.

\ref{aproximado2} Following the same arguments used in~\ref{aproximado1} and using Corollary~\ref{torres} we have that $\lambda \in \sigma_{ap}(M_D)$ if and only if $\inf\{ \vert D(it) - \lambda \vert : t\in \mathbb{R} \} = 0,$
if and only if $\lambda \in \overline{\{  D(it) : t\in\mathbb{R} \}}.$
\end{proof}

\noindent 
T.~Fern\'andez Vidal, D.~Galicer\\
Departamento de Matem\'{a}tica,
Facultad de Cs. Exactas y Naturales, Universidad de Buenos Aires and IMAS-CONICET. Ciudad Universitaria, Pabell\'on I (C1428EGA) C.A.B.A., Argentina, tfvidal@dm.uba.ar, dgalicer@dm.uba.ar\\ 

\noindent P.~Sevilla-Peris\\	
Insitut Universitari de Matem\`atica Pura i Aplicada. Universitat Polit\`ecnica de Val\`encia. Cmno Vera s/n 46022, Spain, psevilla@mat.upv.es


\begin{thebibliography}{10}

\bibitem{aleman2019fatou}
A.~Aleman, J.-F. Olsen, and E.~Saksman.
\newblock Fatou and brothers {R}iesz theorems in the infinite-dimensional
  polydisc.
\newblock {\em J. Anal. Math.}, 137(1):429--447, 2019.
\newblock \href {https://doi.org/10.1007/s11854-019-0006-x}
  {\path{doi:10.1007/s11854-019-0006-x}}.

\bibitem{antezana2022splitting}
J.~Antezana, D.~Carando, and M.~Scotti.
\newblock Splitting the {R}iesz basis condition for systems of dilated
  functions through {D}irichlet series.
\newblock {\em J. Math. Anal. Appl.}, 507(1):Paper No. 125733, 20, 2022.
\newblock \href {https://doi.org/10.1016/j.jmaa.2021.125733}
  {\path{doi:10.1016/j.jmaa.2021.125733}}.

\bibitem{apostol1984introduccion}
T.~M. Apostol.
\newblock {\em Introduction to analytic number theory}.
\newblock Undergraduate Texts in Mathematics. Springer-Verlag, New
  York-Heidelberg, 1976.

\bibitem{aron2017dirichlet}
R.~M. Aron, F.~Bayart, P.~M. Gauthier, M.~Maestre, and V.~Nestoridis.
\newblock Dirichlet approximation and universal {D}irichlet series.
\newblock {\em Proc. Amer. Math. Soc.}, 145(10):4449--4464, 2017.
\newblock \href {https://doi.org/10.1090/proc/13607}
  {\path{doi:10.1090/proc/13607}}.

\bibitem{bayart2002hardy}
F.~Bayart.
\newblock Hardy spaces of {D}irichlet series and their composition operators.
\newblock {\em Monatsh. Math.}, 136(3):203--236, 2002.
\newblock \href {https://doi.org/10.1007/s00605-002-0470-7}
  {\path{doi:10.1007/s00605-002-0470-7}}.

\bibitem{brown1984cyclic}
L.~Brown and A.~L. Shields.
\newblock Cyclic vectors in the {D}irichlet space.
\newblock {\em Trans. Amer. Math. Soc.}, 285(1):269--303, 1984.
\newblock \href {https://doi.org/10.2307/1999483} {\path{doi:10.2307/1999483}}.

\bibitem{conway1990course}
J.~B. Conway.
\newblock {\em A course in functional analysis}, volume~96 of {\em Graduate
  Texts in Mathematics}.
\newblock Springer-Verlag, New York, second edition, 1990.

\bibitem{defant2021frechet}
A.~Defant, T.~Fern\'{a}ndez~Vidal, I.~Schoolmann, and P.~Sevilla-Peris.
\newblock Fr\'{e}chet spaces of general {D}irichlet series.
\newblock {\em Rev. R. Acad. Cienc. Exactas F\'{\i}s. Nat. Ser. A Mat. RACSAM},
  115(3):Paper No. 138, 34, 2021.
\newblock \href {https://doi.org/10.1007/s13398-021-01074-8}
  {\path{doi:10.1007/s13398-021-01074-8}}.

\bibitem{defant2018Dirichlet}
A.~Defant, D.~Garc\'{\i}a, M.~Maestre, and P.~Sevilla-Peris.
\newblock {\em Dirichlet {S}eries and {H}olomorphic {F}unctions in {H}igh
  {D}imensions}, volume~37 of {\em New Mathematical Monographs}.
\newblock Cambridge University Press, Cambridge, 2019.
\newblock \href {https://doi.org/10.1017/9781108691611}
  {\path{doi:10.1017/9781108691611}}.

\bibitem{defantperez_2018}
A.~Defant and A.~P\'{e}rez.
\newblock Hardy spaces of vector-valued {D}irichlet series.
\newblock {\em Studia Math.}, 243(1):53--78, 2018.
\newblock \href {https://doi.org/10.4064/sm170303-26-7}
  {\path{doi:10.4064/sm170303-26-7}}.

\bibitem{demazeux2011essential}
R.~Demazeux.
\newblock Essential norms of weighted composition operators between {H}ardy
  spaces {$H^p$} and {$H^q$} for {$1\leq p, q\leq\infty$}.
\newblock {\em Studia Math.}, 206(3):191--209, 2011.
\newblock \href {https://doi.org/10.4064/sm206-3-1}
  {\path{doi:10.4064/sm206-3-1}}.

\bibitem{diestel2012sequences}
J.~Diestel.
\newblock {\em Sequences and series in {B}anach spaces}, volume~92 of {\em
  Graduate Texts in Mathematics}.
\newblock Springer-Verlag, New York, 1984.
\newblock \href {https://doi.org/10.1007/978-1-4612-5200-9}
  {\path{doi:10.1007/978-1-4612-5200-9}}.

\bibitem{guo2022dirichlet}
K.~Guo and J.~Ni.
\newblock Dirichlet series and the {N}evanlinna class in infinitely many
  variables.
\newblock {\em arXiv preprint arXiv:2201.01993}, 2022.

\bibitem{hedenmalm1997hilbert}
H.~Hedenmalm, P.~Lindqvist, and K.~Seip.
\newblock A {H}ilbert space of {D}irichlet series and systems of dilated
  functions in {$L^2(0,1)$}.
\newblock {\em Duke Math. J.}, 86(1):1--37, 1997.
\newblock \href {https://doi.org/10.1215/S0012-7094-97-08601-4}
  {\path{doi:10.1215/S0012-7094-97-08601-4}}.

\bibitem{kaijser1977note}
S.~Kaijser.
\newblock A note on dual {B}anach spaces.
\newblock {\em Math. Scand.}, 41(2):325--330, 1977.
\newblock \href {https://doi.org/10.7146/math.scand.a-11725}
  {\path{doi:10.7146/math.scand.a-11725}}.

\bibitem{konyaginqueffelec_2002}
S.~V. Konyagin and H.~Queff\'{e}lec.
\newblock The translation {$\frac12$} in the theory of {D}irichlet series.
\newblock {\em Real Anal. Exchange}, 27(1):155--175, 2001/02.

\bibitem{lefevre2009essential}
P.~Lef\`evre.
\newblock Essential norms of weighted composition operators on the space
  {$\mathcal{H}^\infty$} of {D}irichlet series.
\newblock {\em Studia Math.}, 191(1):57--66, 2009.
\newblock \href {https://doi.org/10.4064/sm191-1-4}
  {\path{doi:10.4064/sm191-1-4}}.

\bibitem{Quefflec95}
H.~Queff\'{e}lec.
\newblock H. {B}ohr's vision of ordinary {D}irichlet series; old and new
  results.
\newblock {\em J. Anal.}, 3:43--60, 1995.

\bibitem{queffelec2013diophantine}
H.~Queff\'elec and M.~Queff\'elec.
\newblock {\em Diophantine approximation and {D}irichlet series}, volume~80 of
  {\em Texts and Readings in Mathematics}.
\newblock Hindustan Book Agency, New Delhi; Springer, Singapore, 2020.
\newblock Second edition.
\newblock \href {https://doi.org/10.1007/978-981-15-9351-2}
  {\path{doi:10.1007/978-981-15-9351-2}}.

\bibitem{rudin1962fourier}
W.~Rudin.
\newblock {\em Fourier analysis on groups}.
\newblock Interscience Tracts in Pure and Applied Mathematics, No. 12.
  Interscience Publishers (a division of John Wiley and Sons), New York-London,
  1962.

\bibitem{rudin1969function}
W.~Rudin.
\newblock {\em Function theory in polydiscs}.
\newblock W. A. Benjamin, Inc., New York-Amsterdam, 1969.

\bibitem{saksman2009integral}
E.~Saksman and K.~Seip.
\newblock Integral means and boundary limits of {D}irichlet series.
\newblock {\em Bull. Lond. Math. Soc.}, 41(3):411--422, 2009.
\newblock \href {https://doi.org/10.1112/blms/bdp004}
  {\path{doi:10.1112/blms/bdp004}}.

\bibitem{stessin2003generalized}
M.~Stessin and K.~Zhu.
\newblock Generalized factorization in {H}ardy spaces and the commutant of
  {T}oeplitz operators.
\newblock {\em Canad. J. Math.}, 55(2):379--400, 2003.
\newblock \href {https://doi.org/10.4153/CJM-2003-017-1}
  {\path{doi:10.4153/CJM-2003-017-1}}.

\bibitem{tenenbaum_1995}
G.~Tenenbaum.
\newblock {\em Introduction to analytic and probabilistic number theory},
  volume~46 of {\em Cambridge Studies in Advanced Mathematics}.
\newblock Cambridge University Press, Cambridge, 1995.
\newblock Translated from the second French edition (1995) by C. B. Thomas.

\bibitem{vidal2020montel}
T.~F. Vidal, D.~Galicer, and P.~Sevilla-Peris.
\newblock A {M}ontel-type theorem for {H}ardy spaces of holomorphic functions.
\newblock {\em arXiv preprint arXiv:2004.10511}, 2020.

\bibitem{vukotic2003analytic}
D.~Vukoti\'{c}.
\newblock Analytic {T}oeplitz operators on the {H}ardy space {$H^p$}: a survey.
\newblock {\em Bull. Belg. Math. Soc. Simon Stevin}, 10(1):101--113, 2003.
\newblock URL: \url{http://projecteuclid.org/euclid.bbms/1047309417}.

\end{thebibliography}
\end{document}